\newcommand{\ourtitle}
{Temporal oscillations in Becker-D\"oring equations\\
with atomization}
\author{Robert L. Pego$^{(1)}$,  Juan J. L. Vel\'{a}zquez$^{(2)}$ }
\newcommand{\myrunningheads}
{\pagestyle{myheadings}
\markboth
{Temporal oscillations in Becker-D\"oring equations} 
{R. L. Pego and J. J. L. Vel\'{a}zquez}
}
\newif\ifdraft
\newtheorem{theorem}{Theorem}[section]
\newtheorem{corollary}[theorem]{Corollary}
\newtheorem{lemma}[theorem]{Lemma}
\newtheorem{proposition}[theorem]{Proposition}
\theoremstyle{remark}
\newtheorem{remark}[theorem]{Remark}
\newcommand{\nwc}{\newcommand}
\nwc{\hide}[1]{}  %{{#1}}            % hides blocks of material
\nwc{\ip}[1]{\langle #1 \rangle}     % inner product
\nwc{\sfrac}[2]{{\textstyle \frac{#1}{#2}}}  % small fractions
\nwc{\pdfx}[2]{\texorpdfstring{#1}{#2}}  % Macro to allow symbols in subsection heads
\nwc{\etal}{{\it et al.}}
\nwc{\D}{\partial}
\nwc{\eps}{\varepsilon}
\nwc{\inv}{^{-1}}
\nwc{\re}{\mathop{\rm Re}\nolimits}
\nwc{\im}{\mathop{\rm Im}\nolimits}
\nwc{\one}{{\mathbbm{1}}}
\nwc{\sgn}{\mathop{\rm sgn}\nolimits}
\nwc{\N}{\mathbb{N}}
\nwc{\Z}{\mathbb{Z}}
\nwc{\R}{\mathbb{R}}
\nwc{\CC}{\mathbb{C}}
\nwc{\bbD}{\mathbb{D}}
\nwc{\mb}[1]{\mathbf{#1}}
\nwc{\mc}[1]{{\mathcal #1}}
\nwc{\calA}{{\mathcal A}}
\nwc{\calB}{{\mathcal B}}
\nwc{\calD}{{\mathcal D}}
\nwc{\calF}{{\mathcal F}}
\nwc{\calV}{{\mathcal V}}
\nwc{\calI}{{\mathcal I}}
\nwc{\calZ}{{\mathcal Z}}
\nwc{\bmid}{\bm{\ell}}
\nwc{\vp}{\varphi}
\nwc{\am}{\gamma} %{{\hat a}}
\nwc{\ap}{\alpha} %{a}
\nwc{\esssup}{\mathop{\rm ess\, sup}}
\newcommand{\kk}{\kappa} 
\newcommand{\calJ}{{\mathcal J}} 
\title%[DRAFT \mmddyyyydate\today\quad \currenttime \ \ PLEASE DO NOT CIRCULATE]
{\ourtitle}
\date{WORKING DRAFT \today \\ PLEASE DO NOT CIRCULATE}
\title%[Temporal oscillations in Becker-D\"oring equations]
{\ourtitle}
\date{\today}
\begin{document}
\maketitle

\begin{center}
1-Department of Mathematics
and Center for Nonlinear Analysis\\
Carnegie Mellon University,
Pittsburgh, Pennsylvania, PA 12513, USA\\
email: rpego@cmu.edu
\end{center}

\begin{center}
2- Institut f\"ur Angewandte Mathematik\\
Universit\"at Bonn\\
Endenicher Allee 60\\
53115 Bonn, Germany\\
email: velazquez@iam.uni-bonn.de
\end{center}

\begin{abstract}  %    
We prove that time-periodic solutions arise via Hopf bifurcation
in a finite closed system of coagulation-fragmentation equations.
The system we treat is a variant of the Becker-D\"oring equations,
in which clusters grow or shrink by addition or deletion of monomers.
To this is added a linear atomization reaction for clusters of maximum size.
The structure of the system is motivated by models of gas evolution oscillators
in physical chemistry, which exhibit temporal oscillations under certain
input/output conditions.
\end{abstract}

\medskip
\noindent
{\it Keywords. }{bubbling oscillator, shattering, bubbelator, time periodic solution}

\noindent
{\it Mathematics Subject Classification. } 34C23, 34A34, 82C99 
%\subjclass[2010]{??}

%%%%%%%%%%%%%%%%%%%%%%%%%%%%%%%%%%%%%%%%%%%%%%%%%%%%%%%%%%%%%%%%%%%%%%%
%%%%%%%%%%%%%%%%%%%%%%%%%%%%%%%%%%%%%%%%%%%%%%%%%%%%%%%%%%%%%%%%%%%%%%%
%\typeout{Line Width: \the\linewidth}
%\vfill \pagebreak
%\section{Outline}
%\begin{itemize}
%\item Intro. 
%
%\end{itemize}
%
%\pagebreak

\section{Introduction}\label{s:intro}

Coagulation-fragmentation equations are commonly
used to model particle size distributions in
a wide range of scientific and technological applications. 
These equations model binary reactions
of clusters of size $j$ with clusters of size $k$
as indicated schematically by 
\begin{eqnarray*}
&&\hspace{-1cm}
(j) + (k) \, \stackrel{a_{j,k}}{\longrightarrow} \, (j+k) \quad \mbox{ (aggregation)}, \\
&&\hspace{-1cm}
(j) + (k) \, \stackrel{b_{j,k}}{\longleftarrow} \, (j+k) \quad \mbox{ (binary breakup)}, 
\end{eqnarray*}
With rate coefficients $a_{j,k}$ for aggregation and $b_{j,k}$ for breakup,
the net rate of this binary reaction is modeled by the law of mass action to be
\[
R_{j,k}= a_{j,k}\,n_jn_k - b_{j,k}\,n_{j+k}\,.
\]
The coagulation-fragmentation equations accounting for the 
gain and loss rates for the number density $n_j(t)$ of groups of size $j$
then take the form
\[
\D_t n_j = \frac12 \sum_{k=1}^{j-1} {R_{j-k,k}} 
- \sum_{k=1}^\infty {R_{j,k}} \,,\qquad j=1,2,\ldots
\]

To date, mathematical investigations of the dynamic behavior
of solutions have largely focused on questions of convergence to equilibrium
and the phenomenon of {\em gelation}, in which mass conservation fails 
(either in finite or infinite time) due to a flux to infinite size. 
We refer to classic work of Aizenman and Bak \cite{AB1979}
who established an $H$-theorem for perhaps the simplest coagulation-fragmentation model
with constant rate coefficients,
and Ball, Carr and Penrose \cite{BCP1986} for the first analysis
of (infinite-time) gelation in the Becker-D\"oring equations. 
If fragmentation is weak, finite-time gelation can occur 
\cite{ELMP,EMP,Laurencot2000,VZL1988}
as it does for the case of pure coagulation 
about which there is now an extensive literature.

Regarding convergence to equilibrium,
entropy methods have been effectively used to study
general classes of coagulation-fragmentation 
equations that admit equilibria in {\em detailed balance}, 
meaning that $R_{j,k}=0$ for each individual reaction in the system, 
so the forward and backward reaction rates match. 
See work of Lauren\c cot and Mischler \cite{LM2003} for the continuous-size case
and Ca\~nizo \cite{C-JSP2007} for the discrete-size case. 
More recent studies of equilibration have examined rates of convergence
and their relation to entropy-dissipation relations
\cite{JabinNiethammer,CanizoLods2013,MurrayPego2016,MurrayPego2017,CanizoLods2017}.

In the absence of detailed balance, however, one does not
expect that an $H$-theorem always holds, and it is not clear whether
the structure of coagulation-fragmentation reaction networks 
means that solutions necessarily always converge to some equilibrium.
Sometimes it is indeed the case, as in cases when coagulation is weak
\cite{FM2004} or for special systems that can be studied globally 
using transform methods, as in \cite{DLP2017}.
In \cite{LvR2015}, Lauren\c cot and van Roessel analyzed a model with a
critical balance of coagulation and fragmentation rates, and used transform
methods to show that infinite-time gelation emerges through self-similar
growth.  

On the other hand, in studies of pure coagulation without fragmentation,
the usual expectation of self-similar growth has sometimes been shown not to occur.  
For special rate kernels, solutions with fat tails are known 
to be capable of periodic and even chaotic behavior after rescaling \cite{MP2008}.
Temporal oscillations can persist after rescaling without fat tails
for Smoluchowski equations with diagonal rate kernel \cite{LNV2018}.

Our goal in the present work is to demonstrate that persistent 
oscillations in time are possible in a simple discrete-size
coagulation-fragmentation model, by proving that Hopf bifurcations occur. 

The particular system that we study is a 
modified system of Becker-D\"oring equations.
(For a nice historical review of mathematical developments concerning
the Becker-D\"oring equations, see \cite{HY2017}.)
As usual for Becker-D\"oring equations, 
we suppose that the coagulation of clusters of size $\ell$ with monomers
proceeds at the rate $a_\ell n_\ell n_1$, and clusters of size $\ell+1$ lose monomers
at the rate $b_{\ell+1} n_{\ell+1}$. 
We take these rates to apply only for a finite range of sizes $1\le \ell\le N$,
however, and consider only the simplest case, always taking $a_\ell = b_{\ell+1}=1$.
Thus the net flux of clusters from size $\ell$ to $\ell+1$ is
$J_\ell=R_{1,\ell}$, as given by 
\begin{equation}
J_{\ell}   = n_{\ell}n_{1} - n_{\ell+1} \,, \quad\mbox{for}\ 1\leq\ell\leq N ,
\label{d:Jell} %\label{B1E3}%
\end{equation}
We suppose further that $M=N+1$ 
is the size of the largest clusters in the system,
and these are also subject to a {\em linear atomization} reaction that 
converts an $M$-cluster into $M$ monomers and proceeds at rate $Kn_M$.
Thus the governing equations take the following form:
\begin{align}
\D_t n_{\ell} &= J_{\ell-1} - J_{\ell} \,, \quad\mbox{for}\ 2\leq\ell\leq N,
\label{e:dtnell}\\[5pt]
\D_t n_{M} &= J_{M-1} - Kn_{M} \,, \quad M=N+1,
\label{e:dtnM}\\
\D_t n_{1}  &= -J_{1}-\sum_{\ell=1}^{N}J_{\ell}+MKn_{M} \,.
\label{e:dtn1}
\end{align}
All solutions of the system \eqref{e:dtnell}--\eqref{e:dtn1} conserve mass, since
\[
\partial_{t}\left(  \sum_{\ell=1}^{M}\ell n_{\ell}\right)  =0.
\]
A formal continuum analog of this system will be studied for illustrative purposes
in Section 4.

\section{Background and motivation}
{\em  Model with nonlinear atomization.}
In the physical literature, recent work of Matveev \etal~\cite{matveev2017}
and Brilliantov \etal~\cite{brilliantov2018}
has identified a coagulation-fragmentation model with a different,
{\em nonlinear} atomization mechanism that exhibits persistent temporal oscillations
in numerical simulations.  In this model, aggregation of clusters of size $i$ and $j$ 
proceeds at rate $a_{i,j}n_in_j$ where
\[
a_{i,j} = (i/j)^\alpha + (j/i)^\alpha
\]
and pairs of such clusters atomize upon collision into 
$i+j$ monomers at rate $\lambda a_{i,j}n_i n_j$. In total, the rate equations
in \cite{matveev2017} take the form
\begin{align}
\D_t n_1 &= -\sum_{i=1}^\infty a_{1,i}n_1n_i 
+ \lambda \sum_{j=2}^\infty j a_{1,j}n_1n_j
+ \frac\lambda2 \sum_{i=2}^\infty \sum_{j=2}^\infty (i+j)a_{i,j}n_i n_j,
\\
\D_t n_k &= \frac12 \sum_{i=1}^{k-1} a_{i,k-i} n_i n_{k-i} - (1+\lambda)\sum_{i=1}^\infty a_{i,k}n_in_k,
\quad k\ge 2.
\end{align}
This system has the feature that 
interactions between large clusters of similar size appear to be dominated
by interactions between large clusters and small ones 
(for which either $i/j$ or $j/i$ is large).
Oscillations are found for $\frac12< \alpha \leq1$ and small $\lambda>0$.
Though the numerics is convincing, to our knowledge there is no proof yet that 
temporal oscillations persist in this system.

{\em Bubbling oscillators.}
Our motivation for studying the system \eqref{e:dtnell}--\eqref{e:dtn1} comes from 
literature in physical chemistry concerning {\em bubbling oscillators}
(often called `gas evolution oscillators' in much of the literature). 
In these systems, dissolved gas (such as CO or CO$_2$) is {\em added} slowly
to a liquid solution, producing a super-saturated mixture.  At some time,
nucleation of gas bubbles occurs spontaneously and the bubbles grow
rapidly and carry most of the dissolved gas {\em out of the system}. 
The first system of this kind was reported by J.~S.~Morgan in 1916, who found
that a small concentration of formic acid mixed in sulphuric acid 
produced periodic bursts of carbon monoxide. 
Such systems were the subject of part of an extensive series of  quantitative studies
by R.~M.~Noyes and collaborators concerning chemical oscillators,
including some of the original studies of 
chemical oscillators such as the BZ reaction and the Oregonator. 
Regarding gas evolution oscillators, we especially refer to \cite{smith1983,YRN1985,bar1992}.
The phenomenon of sudden outgassing of CO$_2$ after slow buildup of supersaturation
was responsible for the 1986 Lake Nyos disaster in Cameroon, which killed more than 1700 people.

In the work of Yuan, Ruoff and Noyes \cite{YRN1985}, this process was simulated
numerically by grouping bubble sizes into a finite set corresponding to 
exponentially spaced radii $r_j$, and writing rate equations to model the 
number density $N_j$ of bubbles of size $r_j$. A key equation when $r_j$
 greater than a critical value $r_{\rm eq}$ is
\begin{equation}
\D_t N_j = q_{j-1}N_{j-1} - (q_j+k_j)N_j \,,
\end{equation}
where the coefficients $q_j$ are proportional to bubble growth rate
and the $k_j$ are rate constants for escape.
This resembles a linearized Becker-D\"oring equation or a discretized advection equation,
and models the process of free bubble growth and escape. With $M=60$ size classes,
numerical simulations in \cite{YRN1985} exhibit temporal oscillations for 
a range of parameters designed to model experimental conditions. 

Bar-Eli and Noyes \cite{bar1992} later devised a simplified,
qualitative model for bubbling oscillators that involves a nonlinear {\em differential-delay equation}
for the concentration of dissolved gas.  When linearized about a constant steady-state,
one obtains a constant-coefficient linear DDE of the form
\begin{equation}
\partial_t x(t) = -a x(t-\tau) - b x(t) \,,
\label{dde}
\end{equation}
where the parameters $a$, $b$ and the delay time $\tau$ are positive constants. 
Whenever $a>b$, one finds there is an oscillatory transition from stability to 
instability as $\tau$ increases.

We sketch loosely how one can see this mathematically. 
(A detailed analysis of \eqref{dde} can be found in work of
Hadeler and Tomiuk \cite{HadelerTomiuk}.)
Equation \eqref{dde} has solution $e^{\lambda t}$ provided 
\begin{equation}
\rho(\lambda):= a e^{-\lambda \tau} + b + \lambda = 0. 
\label{rho0}
\end{equation}
For $\tau=0$, naturally $\lambda<0$, and moreover $\lambda=0$ is never possible for any $\tau$. 
But for $a>b$ and $\tau$ sufficiently large, 
there are solutions with $\re\lambda>0$.  To show this is so, 
one can consider the winding number around 0 of a curve $\rho\circ \gamma$,
where $\gamma$ is a concatenation of a path $s\mapsto -is$ for $s\in[-R,R]$
and a path in the right half plane along the semicircle where $|\gamma|=R>a+b$. 
Along the semicircle, $\rho\circ\gamma$ can never cross the negative real axis
$\R_-$. Along the imaginary axis, however,
\[
\rho\circ\gamma(s) = a e^{is\tau} + b - is,
\]
and this does cross $\R_-$ for $s$ between $0$ and $2\pi/\tau$,
if $\tau$ is large enough. Moreover, $\rho\circ\gamma(s)$ can only 
ever cross $\R_-$ going from the second quadrant to the third, since whenever
$\rho\circ\gamma(s)<0$, 
\[
\frac{d}{ds} \rho\circ\gamma(s) = i\tau(\rho\circ\gamma(s)-b+is) - i 
\]
and this has negative imaginary part. Consequently,
the winding number of $\rho\circ\gamma$ around 0 is positive if $\tau$ is 
large enough, and this implies \eqref{rho0} has a root $\lambda$ inside $\gamma$.

{\em Becker-D\"oring with linear atomization (our model).}
Now, the rough idea behind our model \eqref{e:dtnell}--\eqref{e:dtn1}
is that the Becker-D\"oring equations involve a well-known advection mechanism
that transports mass from small cluster sizes to large ones when the monomer
concentration is supercritical. 
The atomization reaction added in \eqref{e:dtnM} couples the advected wave back to the monomer
concentration after a time delay that depends on the size of the system. 
Luckily enough, we find that for large $M$ there indeed is an oscillatory transition to 
instability as the parameter $K$ varies, in a certain parameter range
where $K$ is small but $KM$ remains large. 
See Figure~\ref{f:n1vt},
where we plot the monomer concentration {\it vs.}\ time
for a numerically computed solution of \eqref{e:dtnell}--\eqref{e:dtn1}
with parameters and initial values given by 
\begin{equation}\label{fig1pars}
M=25,\quad K=3, \quad n_1=4.2,\quad n_\ell=1+K=4 \quad\mbox{ for $\ell\geq2$.}
\end{equation}
\begin{figure} \begin{center}
\includegraphics[width=4.7in]{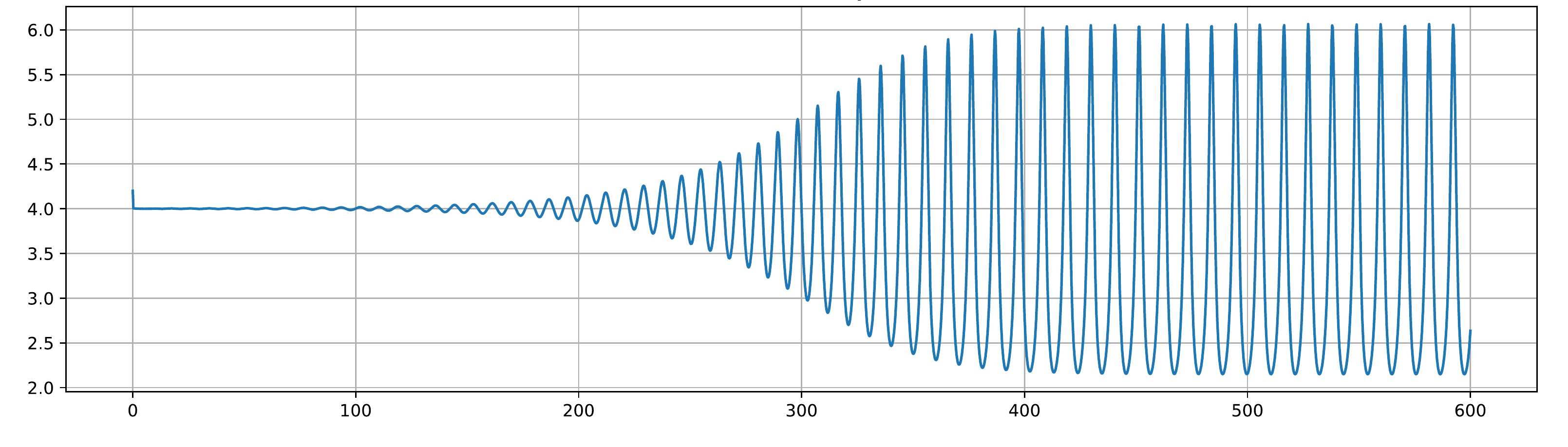}
\end{center}
\caption{Monomer density $n_1$ vs. $t$ for a numerical solution of 
\eqref{e:dtnell}--\eqref{e:dtn1}.}
\label{f:n1vt}
\end{figure}

{\em Other models with linear atomization.}
Finally, we mention two other kinds of merging-splitting models 
involving a linear atomization reaction that have appeared in the literature. 
Alongside discussion of Niwa's model \cite{Niwa-JTB2003} for animal group size,
Ma \etal~\cite{Ma_etal_JTB11} described a ``preferential attachment'' model, which takes the form
\begin{equation}
\D_t n_j = (j-1)n_1n_{j-1} - j n_1n_j - Kj n_j, \quad\mbox{for}\ j\geq 2.
\end{equation}
This model admits a simple logarithmic distribution in equilibrium, of the form
\[
n_j = \frac{e^{-\beta j}}j, \qquad e^{-\beta} = \frac{n_1}{n_1+K}.
\]
(This is roughly similar to the distribution Niwa found to be a good description
of empirical data on school size for pelagic fish.) 
A model of herd behavior by networks of colluding agents in financial markets
introduced by Eguiluz \& Zimmermann \cite{eguiluz2000} takes the form
\begin{equation}
\D_t n_j = \sum_{k=1}^{j-1} (j-k)k n_{j-k}n_k - 2 \sum_{k=1}^\infty jk n_j n_k - Kjn_j,
\quad j\geq 2.
\end{equation}
D'Hulst and Rodgers \cite{d2000exact} found a formula for equilibrium solutions of this model
by use of generating functions. But as far as we are aware, 
no analysis of dynamics has been carried out for either of these models. 

\section{Equilibria, linearization, and main result}

In this section, we find the general equilibrium solutions of the model
\eqref{e:dtnell}--\eqref{e:dtn1}, describe the special family of constant equilibria,
and state our main rigorous result on the existence of Hopf bifurcations from
this family, occurring at particular values of $K$, for large enough $M$.

\subsection{General equilibria}
We find the general equilibria as follows. 
In equilibrium, due to \eqref{e:dtnell} the fluxes $J_\ell$ are all equal
to the same value $J$ for $\ell=1,\ldots,N$, 
so the equilibrium number densities $\bar n_\ell$ satisfy the difference equation
$\bar n_{\ell+1} = z\bar n_\ell-J$, when we require $\bar n_1=z$. 
For $z\neq1$, the solution takes the form
\begin{equation}
\bar n_\ell =  z(1-\alpha) + z^\ell\alpha,  
\quad\mbox{where}\quad 
J= (z^2-z)(1-\alpha).
\end{equation}
To obtain an equilibrium it remains to require that \eqref{e:dtnM} hold, i.e.,
\[
0 = J-K\bar n_M = (z^2-z-Kz)(1-\alpha) - Kz^M\alpha .
\]
Then it follows (recall $N=M-1$)
\begin{equation}
\alpha = \frac{z-1-K}{Kz^{N}+z-1-K}\,,
\label{e:alpha}
\end{equation}
and 
\begin{equation}
\bar n_\ell = \frac{Kz^M+z^\ell(z-1-K)}{Kz^N+z-1-K}, \quad \ell=1,\ldots,M.
\label{e:neq}
\end{equation}
Note that \eqref{e:dtn1} then holds also. For every $z>0$, $N>1$ and $K>0$,
such an equilibrium exists and is positive.
In case $z=1$, one finds directly that 
\begin{equation}
\bar n_\ell = \frac{1+(M-\ell)K}{1+NK}, \quad \ell=1,\ldots,M.
\label{d:nell}
\end{equation}

The total mass as a function of $z$ and $K$ is now
\[
m = \sum_{\ell=1}^M \ell n_\ell = 
\alpha \mu_M(z) + (1-\alpha)z\mu_M(1)
\]
where 
\[
\mu_M(z) = \sum_{\ell=1}^M \ell z^\ell = z \frac{d}{dz}\frac{1-z^{M+1}}{1-z}
= \frac{z-z^{M+1}}{(1-z)^2} -\frac{Mz^{M+1}}{1-z}.
\]

\subsection{Linearization at constant equilibria}
Particularly convenient for our analysis is the special family
of equilibria that have constant densities, corresponding to $\alpha=0$.
By \eqref{e:alpha} these take the form
\begin{equation}\label{d:barn}
\bar n_{\ell}=A:=1+K\,,\quad 1\leq\ell\leq M.
\end{equation}
Corresponding to $K>0$ we require $A>1$.
We will study the linearization of the system \eqref{e:dtnell}--\eqref{e:dtn1} about
this equilibrium. We write:%
\[
n_{k}=A+v_{k}\,, \quad 1\leq\ell\leq M.
\]
The linearized fluxes take the form%
\[
L_{\ell}=Av_{1}+Av_{\ell}-v_{\ell+1}\,, \quad 1\leq\ell\leq N=M-1,
\]
and the linearized evolution equations are written as follows:%
\begin{align}
\partial_{t}v_{\ell}  &  =L_{\ell-1}-L_{\ell}\,, \quad 2\leq\ell\leq N,
\label{L:2N}\\[5pt]
\partial_{t}v_{M}  &  =L_{M-1}-Kv_{M}\,, 
\label{L:M}\\
\partial_{t}v_{1}  &  =-L_{1}-\sum_{\ell=1}^{N}L_{\ell}+MKv_{M} .
\label{L:1}
\end{align}
Equivalently, after some computations, the system takes the more explicit form%
\begin{align}
\partial_{t}v_{\ell} &= K\left(  v_{\ell-1}-v_{\ell}\right)  +\left(  v_{\ell
-1}-2v_{\ell}+v_{\ell+1}\right) \,,\quad 2\leq\ell\leq N,
\label{e:vell}\\[5pt]
\partial_{t}v_{M} &= \left(  K+1\right)  \left(  v_{1}+v_{N}-v_{M}\right) ,
\label{e:vM}\\
\partial_{t}v_{1} &= %-L_{1}-\sum_{\ell=1}^{N}L_{\ell}+KMv_{M} 
-A\left(N+3\right)  v_{1}+v_{2}-K\sum_{\ell=2}^{N}v_{\ell}+\left(  MK+1\right)  v_{M}.%
%\label{e:v1}
\end{align}
Equation \eqref{e:vell} yields a combination of diffusion and transport. It is not able
to yield oscillatory behavior by itself, but this will be generated through the
`boundary conditions,' or more precisely the equations with $\ell=M$ and $\ell=1.$

Looking for solutions of this system with the form%
\[
v_{\ell}=V_{\ell}e^{\lambda t}\,, \quad V_{\ell}\in\mathbb{C},%
\]
leads to the eigenvalue problem (recalling $A=1+K$ and $M=N+1$)
\begin{align}
\lambda V_{1}  &= -A\left(  N+3\right)  V_{1} + V_{2} 
    - K\sum_{\ell=2}^{N} V_{\ell}+\left(  MK+1\right)  V_{M} \,,
\label{v:v1}\\
\lambda V_{\ell}  &= K\left(  V_{\ell-1}-V_{\ell}\right)  
    + \left(  V_{\ell-1}-2V_{\ell}+V_{\ell+1}\right)\,,  \quad 2\leq\ell\leq N\,,
\label{v:vell} \\[6pt]
\lambda V_{M}  &= \left(  K+1\right)  \left(  V_{1}+V_{N}-V_{M}\right)\,.
\label{v:vM}
\end{align}

This system takes the form of an eigenvalue problem $BV=\lambda V$
for a vector $V=(V_1,\ldots,V_M)^T \in \CC^M$, with $M\times M$ matrix
$B$ having the structure
\begin{equation}
B  = 
\begin{pmatrix}
-A(M+2) & 1-K & -K && \cdots & -K & MK+1 \\
A & -A-1 & 1 & 0 &\cdots & 0 & 0 \\
0 & A & -A-1 & 1 & & & \\
 & 0 & A & & & & \\
\vdots &\vdots && \ddots &\ddots&\ddots \\
0 & 0 & & \cdots & A & -A-1 & 1 \\
A &0 & & \cdots & 0 & A & -A
\end{pmatrix}
\end{equation}
Our goal is to understand the spectrum of $B$ in some detail, and eventually
show that in a certain parameter range, some pair of complex eigenvalues
of $B$ crosses the imaginary axis, and this forces the system
\eqref{e:dtnell}--\eqref{e:dtn1} to undergo a Hopf bifurcation.

To begin, % (most easily using \eqref{L:2N}--\eqref{L:1})
one can check  that for any eigenvector
$V=(V_1,\ldots,V_M)^T$ corresponding to a nonzero eigenvalue
$\lambda\neq0$, the mass conservation condition holds:
\begin{equation}
\sum_{\ell=1}^M \ell V_\ell = 0.
\label{v:mass0}
\end{equation}
This is due to the fact that $\bmid= (1,2,\ldots,M)$ is a left null vector of $B$. 
Thus $\lambda=0$ is an eigenvalue of $B$.  If we represent
the equations of equilibrium from \eqref{e:dtnell}--\eqref{e:dtn1} 
in vector form as $\calF(\bar n)=0$, then the matrix $B=\D\calF/\D n$ 
evaluated at $\bar n$ with constant components $A$. Thus by differentiation it naturally 
follows from \eqref{e:neq} 
that a right null vector $\bar v$ satisfying $B\bar v=0$ is given by 
\begin{equation}
\bar v_\ell = 
 \left. \frac{\D \bar n_\ell}{\D z}\right|_{z=A} =
M + \frac{A^\ell}{KA^N} - A \frac{KNA^{N-1}+1}{KA^N} = 1+\frac{A^\ell-A}{KA^N}
%{1-N}+ A^{\ell-N} 
\,, \quad 1\leq \ell\leq M.
\label{v:null}
\end{equation}

In fact, we have the following.
\begin{lemma}\label{lem:simple0}
For all $N>1$ and $K>0$, 
$\lambda=0$ is a {simple} eigenvalue of $B$.
\end{lemma}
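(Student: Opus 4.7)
The plan is to prove the statement in two steps: first, that the geometric multiplicity is one (so $\ker B$ is spanned by $\bar v$), and then that no Jordan block exists at $0$ (so $\ker B^2 = \ker B$), which together yield algebraic multiplicity one.

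For the first step, I would analyze the null space directly via the interior recurrence. Setting $\lambda = 0$ in \eqref{v:vell} produces the second-order linear recurrence
\[
V_{\ell+1} - (K+2) V_\ell + (K+1) V_{\ell-1} = 0, \qquad 2 \le \ell \le N,
\]
whose characteristic polynomial factors as $(r-A)(r-1)$ with $A = 1+K$. Hence any candidate null vector must take the form $V_\ell = c_1 A^\ell + c_2$ for all $1 \le \ell \le M$, giving two free parameters. The boundary equation \eqref{v:vM} at $\lambda=0$ reads $V_M = V_1 + V_N$, which after substitution reduces (a short computation, using $A-1=K$) to $c_2 = c_1(KA^N - A)$, a single scalar constraint. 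The remaining equation \eqref{v:v1} is \emph{automatic} by mass conservation: since $\bmid = (1,2,\ldots,M)$ is a left null vector of $B$, for any $V \in \CC^M$ one has $\sum_{\ell=1}^M \ell\,(BV)_\ell = 0$, so whenever $(BV)_\ell = 0$ for $\ell \ge 2$ it follows that $(BV)_1 = 0$ as well. Thus $\dim\ker B = 1$, with $\ker B = \CC\bar v$ as described in \eqref{v:null}.

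For the second step, suppose $B^2 W = 0$. Then $BW \in \ker B$, so $BW = c\bar v$ for some $c \in \CC$. Applying $\bmid^T$ on the left kills the left-hand side (because $\bmid^T B = 0$), leaving $c\,\bmid^T \bar v = 0$. Now from the explicit formula \eqref{v:null} and the fact that $A = 1+K > 1$ implies $A^\ell \ge A$ for $\ell \ge 1$, every component satisfies $\bar v_\ell \ge 1 > 0$. Consequently
\[
\bmid^T \bar v = \sum_{\ell=1}^M \ell\, \bar v_\ell > 0,
\]
which forces $c = 0$ and hence $BW = 0$. This shows $\ker B^2 = \ker B$, so the eigenvalue $0$ is simple.

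I do not anticipate a serious obstacle: the interior recurrence is elementary because its characteristic roots are explicit, the redundancy of the first row is handed to us by mass conservation, and the positivity of $\bar v$ makes the non-orthogonality $\bmid^T \bar v \neq 0$ manifest without any delicate cancellation. The only thing to be careful about is verifying that the formula \eqref{v:null} truly gives $\bar v_\ell \ge 1$ uniformly in $\ell$, but this is immediate from $A > 1$.
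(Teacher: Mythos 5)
Your proof is correct, and it shares the paper's overall two-step structure (geometric multiplicity one, then no Jordan block), with step~2 being essentially identical: both arguments hinge on $\bmid B = 0$ together with the positivity $\bar v_\ell \ge 1$, which yields $\bmid^T \bar v > 0$ and rules out a generalized eigenvector. Where you differ is in step~1. The paper works with the fluxes $L_\ell = AV_1 + AV_\ell - V_{\ell+1}$: it observes that for any $V\in\ker B$ the $L_\ell$ are all equal, normalizes by subtracting a multiple of $\bar v$ so that the common flux is zero, and then uses $L_N = KV_M$ and the inductive positivity $V_\ell = A_\ell V_1$ with $A_\ell>0$ to force $V=0$. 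You instead solve the interior recurrence $V_{\ell+1} - (K+2)V_\ell + (K+1)V_{\ell-1}=0$ explicitly via its characteristic roots $1$ and $A$, obtaining the two-parameter family $V_\ell = c_1 A^\ell + c_2$, then show the $\ell=M$ boundary equation cuts this to one dimension while the $\ell=1$ equation is a consequence of $\bmid B=0$. Your route is arguably more transparent because the characteristic roots are immediate, while the paper's flux argument is more structurally faithful to the Becker--D\"oring setup and avoids writing down the general solution; both are valid and of comparable length.
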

\begin{proof}
First, we show the null space of $B$ is one-dimensional.
Whenever $BV=0$, the fluxes defined by 
\begin{equation}
L_\ell = AV_1+AV_\ell-V_{\ell+1}\,, \quad 1\leq \ell\leq N, 
\label{d:LV}
\end{equation}
must all take the same value due to \eqref{L:2N},  
and for $V=\bar v$ this value is $K\bar v_M>0$ due to \eqref{L:M}.  
If $BV=0$, then we can replace $V$ by a linear combination with $\bar v$
to make all fluxes $L_\ell=0$. 
But by \eqref{d:LV} it follows $V_2=2A V_1$ and, by induction,
$V_\ell=A_\ell V_1$ with $A_\ell>0$ for $\ell=2,\ldots,M$.
Since $0=L_N=KV_M$,
the only vector $V$ making all the fluxes vanish is $V=0$. 
It follows that $\bar v$ spans the null space of $B$.

Next, we claim there is no generalized eigenvector $V$ satisfying 
$BV=\bar v$.  The reason is that, because $\bar v_\ell \geq 1>0$ for all $\ell$
and $\bmid$ is a left null vector of $B$, we would obtain a contradiction, via
\[
0<
\sum_{\ell=1}^M \ell \bar v_\ell 
= \bmid \bar v = \bmid(B V) = (\bmid B)V=0.
\]
Thus the eigenvalue $\lambda=0$ has algebraic multiplicity one, so it is simple.
\end{proof}

\subsection{Main results}

If $\lambda$ is an eigenvalue of $B$, we say $\lambda$ is {\em unstable} if $\re\lambda>0$.
We find that we can show the matrix $B=B(K,M)$ has unstable eigenvalues
when $M$ is sufficiently large and $K$ is small but not too small, 
in a range proportional to $1/\sqrt{M}$.
These eigenvalues are characterized as follows. 
It is convenient to state our results in terms of the parameter
\begin{equation}\label{d:kappa}
\kappa = K\sqrt{M} \,,
\end{equation}
in place of $K=\kappa/\sqrt{M}$.

\begin{theorem} \label{thm:eigsB}
For each $k\in\N$ and $\beta_0\in(0,1)$, there exists 
$\beta_k>\beta_0$, and positive constants $M_{0,k}$, $\hat C_k$, 
such that for each $M>M_{0,k}$, the following hold:
\begin{enumerate}
\item If $\beta_0<\kk<\beta_k$, then any unstable eigenvalue of $B$ is non-real and simple, and satisfies
\[
|\lambda|\le \hat C_k M^{-3/2}.
\]
\item There are numbers $\kappa_j=\kappa_j(M)$ for $j=1,\ldots,k$,
satisfying 
\[
\beta_0 =:\kappa_0 <\kappa_1<\ldots<\kappa_k<\kappa_{k+1}:=\beta_k \,,
\]
such that:
\begin{enumerate}
%\item If $\beta_0<\kk<\kappa_1$ then $B$ has no unstable eigenvalue. 
\item 
If $\kappa_j<\kappa<\kappa_{j+1}$ ($j=0,\ldots,k$),
%($j=1,\ldots,k-1$) or $\kappa_j<\kappa<\beta_k$ ($j=k$),
then $B$ has exactly $j$ complex-conjugate
pairs $(\lambda,\bar\lambda)$ of unstable eigenvalues. 
\item There are analytic curves $\lambda_j:[\beta_0,\beta_k]\to\CC$, $j=1,\ldots,k$, such that
$\lambda_j(\kappa)$ is an eigenvalue of $B$ that satisfies 
\begin{equation}
\re\lambda_j(\kappa_j)=0,  \qquad
\im \lambda_j(\kappa)>0
\quad\mbox{for all $\kappa\in [\beta_0,\beta_k]$,}
\end{equation}
along with 
\begin{equation}\label{e:dlamdk}
\re \frac{d\lambda_j}{d\kappa}>0 \,, \quad
\im \frac{d\lambda_j}{d\kappa}>0 \,, \quad
\quad\mbox{for all $\kappa\in[\kappa_j,\beta_k]$.}
\end{equation}
\end{enumerate}
\end{enumerate}
\end{theorem}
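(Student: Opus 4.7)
The plan is to reduce the $M$-dimensional eigenvalue problem $BV = \lambda V$ to a single scalar transcendental equation and then perform an asymptotic analysis in the regime $K = \kappa/\sqrt{M}$, $\lambda = \mu/M^{3/2}$, as $M\to\infty$. Since the interior equations \eqref{v:vell} form a three-term linear recurrence in $\ell$ with constant coefficients, its general solution has the form $V_\ell = c_+ r_+^\ell + c_- r_-^\ell$, where $r_\pm(\lambda, K)$ are the two roots of $r^2 - (K+2+\lambda)r + (K+1) = 0$. Imposing the two remaining boundary equations \eqref{v:v1} and \eqref{v:vM}, after using the closed-form geometric sums $\sum_{\ell=2}^N r_\pm^\ell$ that appear in \eqref{v:v1}, yields a $2\times 2$ homogeneous linear system for $(c_+, c_-)$. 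Its determinant $D_M(\lambda; K)$ is, up to a harmless factor, the characteristic polynomial of $B$, so the whole analysis of the spectrum reduces to studying the zeros of this scalar analytic function.

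In the asymptotic regime I would expand $r_+ = 1 + K + O(\lambda/K)$ and $r_- = 1 - \lambda/K + O(\lambda^2/K^2)$, so that $r_+^M \sim \exp(\kappa\sqrt{M} - \kappa^2/2)$ diverges exponentially while $r_-^M \to e^{-\mu/\kappa}$ stays bounded. Any unstable eigenvector must therefore carry an exponentially small coefficient on the growing mode, and after dividing the rows of the $2\times 2$ system by the dominant prefactors, $D_M$ should limit to a concrete transcendental equation $D_\infty(\mu; \kappa) = 0$ in which the exponential $e^{-\mu/\kappa}$ plays the role of the delay term in the dispersion relation \eqref{rho0} motivated in Section 2. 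The coefficient of the exponential and the polynomial-in-$\mu$ terms come from the reduced boundary contributions of \eqref{v:vM} and \eqref{v:v1}, with the large factor $MK+1$ on $V_M$ contributing an $O(\kappa\sqrt{M})\cdot c_+ r_+^M$ that precisely compensates the smallness of $c_+$.

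I would then count zeros of $D_\infty(\cdot; \kappa)$ in the right half-plane using the argument-principle/winding-number computation sketched right after \eqref{rho0}: for $\kappa$ small there are no zeros with $\re\mu > 0$; purely imaginary zeros $\mu = i\omega_j > 0$ appear at a discrete sequence of critical values $\kappa_j^\infty$; and at each crossing the winding number jumps by $+1$, producing one new complex-conjugate pair of unstable roots. Differentiating $D_\infty$ implicitly at $(\mu,\kappa) = (i\omega_j, \kappa_j^\infty)$ gives $d\mu/d\kappa = -\partial_\kappa D_\infty/\partial_\mu D_\infty$, and a direct check of the real and imaginary parts of this derivative should yield the transversality \eqref{e:dlamdk}. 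To transfer the conclusions to finite $M$, observe that after the rescaling $D_M$ depends analytically on $\varepsilon = 1/\sqrt{M}$, and apply the implicit function theorem at each simple zero of $D_\infty$ to produce the analytic branches $\lambda_j(\kappa) = \mu_j(\kappa)/M^{3/2}$ with $\kappa_j(M)\to\kappa_j^\infty$, inheriting simplicity and the sign conditions on the derivative. Choosing $\beta_k > \kappa_k^\infty$ (and less than $\kappa_{k+1}^\infty$) then gives exactly $k$ crossings inside $[\beta_0,\beta_k]$ for $M$ large.

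The remaining global claims, namely that \emph{all} unstable eigenvalues satisfy $|\lambda|\le\hat C_k M^{-3/2}$, are non-real, and that the counts in part (2a) are exact, require separate a priori arguments outside the asymptotic regime: one shows that any zero of $D_M$ with $\re\lambda>0$ and $|\lambda|$ much above the predicted scale forces $r_+^M$ to be badly unbalanced against the other terms, producing a contradiction; and a monotonicity/sign analysis of $D_M$ on $\R_+$ (exploiting the nonnegativity of the off-diagonal structure of $B$, analogous to a Perron--Frobenius argument) excludes real unstable eigenvalues. The principal obstacle will be the careful bookkeeping required to identify $D_\infty$ cleanly: the $\ell = 1$ boundary equation \eqref{v:v1} involves the long sum $\sum_{\ell=2}^N V_\ell$ and the large coefficient $MK+1$ on $V_M$, both of which can diverge with $M$ and must be matched against the exponentially large $r_+^M$ factor arising from \eqref{v:vM}. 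Getting these three compensating large quantities to combine into a finite, nondegenerate limit—so that the conclusions of the DDE winding-number argument can be imported rigorously—is the heart of the proof.
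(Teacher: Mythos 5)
Your overall strategy closely matches the paper's. The paper likewise reduces $BV=\lambda V$ to a two-by-two determinant condition (Section~\ref{s:reform}), written as $F(\varphi)=0$ with $\lambda=(A-\varphi^{-1})(\varphi-1)$; here $\varphi$ plays the role of $1/r$ in your notation, and the quadratic \eqref{d:lambdavp} is exactly your recurrence characteristic polynomial after that substitution. It then splits $F=F_0+A^{-M}F_1$ with $F_1$ exponentially small, rescales via $z=M(\varphi-1)$ and $\kappa=K\sqrt{M}$ (your $\mu$ is essentially $\kappa z$), and shows $K^{-3}F_0(1+z/M)\to Q(z;\kappa)=e^z(1+z^2/\kappa^2)-(1+z)$. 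This $Q$ is precisely the DDE-flavored limit object $D_\infty$ you anticipate, and the root count, transversality, and transfer back to finite $M$ proceed as you sketch: an analysis of how roots of $Q$ cross the imaginary axis with $dz/d\kappa$ in the first quadrant, followed by an implicit-function-theorem step producing the branches $z_j^\eps(\kappa)$ and hence $\lambda_j(\kappa)$.

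One concrete suggestion in your final paragraph does not work as written. You propose excluding real unstable eigenvalues by a Perron--Frobenius style argument "exploiting the nonnegativity of the off-diagonal structure of $B$," but $B$ is not a Metzler matrix: its first row carries off-diagonal entries $-K<0$ in the middle columns (coming from the $-K\sum_{\ell=2}^N V_\ell$ term in \eqref{v:v1}), so that sign structure simply is not present. The paper instead proves directly that $Q(\cdot;\kappa)$ has no real roots other than the double root at $z=0$ (by convexity of $e^z$) and that all of its non-real roots are simple, and combines this with a two-stage global localization of roots of $F_0$: first Proposition~\ref{p:F0roots}, which accounts for all $M+4$ roots (a double root at $1$, three near $1/A$ and $\pm A^{-1/2}$, one near $-M$, and $M-2$ in a thin punctured annulus), then Lemma~\ref{l:phibound} and Proposition~\ref{p:F0roots1}, which force any root with $\re\lambda\ge0$ into the disk $|\varphi-1|\le\alpha_1/M$ where the uniform convergence to $Q$ applies. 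Those Rouch\'e-type localization estimates against the dominant term $S_4$ of $\varphi^{-M}F_0$ are exactly the "careful bookkeeping" you correctly flag as the heart of the matter; expect the bulk of the work to lie there rather than in the limit analysis itself.
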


By the properties stated in part 2 of this theorem, the matrix
$B$ has a unique pair of nonzero, purely imaginary eigenvalues 
$\pm\lambda_j(\kappa_j)$ when $M$ is large and $\kappa=\kappa_j(M)$,
and these cross transversely into the right half plane as $K$ increases.

The simple eigenvalue at zero, described in Lemma~\ref{lem:simple0}, 
is nominally an obstruction to applying the standard Hopf bifurcation theorem
at this point. 
This eigenvalue is easily removed, however, by considering the dynamics 
of the nonlinear system \eqref{e:dtnell}--\eqref{e:dtn1} 
restricted to the invariant affine hyperplane determined by conservation of mass,
i.e., the hyperplane where 
\begin{equation}
\sum_{\ell=1}^n \ell n_\ell = 
\sum_{\ell=1}^n \ell \bar n_\ell  
\end{equation}
with $\bar n=(\bar n_\ell)$ 
being the constant equilibrium state from \eqref{d:barn}.
Within this hyperplane, the linearization of the 
system \eqref{e:dtnell}--\eqref{e:dtn1} 
is restricted to orthogonal complement of the left null vector $\bmid$ of $B$.
In this subspace, the zero eigenvalue is removed, and the 
standard Hopf bifurcation theorem can be applied to yield the following result.
(For a proof of the Hopf bifurcation theorem see \cite[pp.~98--99]{ChowHale}.
For further discussion also see \cite[pp.~150--152]{GH}.)

\begin{theorem}\label{thm:hopf}
Let $k\in\N$ and suppose $M>M_{0,k}$ as given by Theorem~\ref{thm:eigsB}.
Then for each $j=1,\ldots,k$, the system \eqref{e:dtnell}--\eqref{e:dtn1} admits a Hopf bifurcation 
as the bifurcation parameter $\kappa$ passes through $\kappa_j=\kappa_j(M)$.
Thus a time-periodic solution exists
for some value of $\kappa$ with $|\kappa-\kappa_j|$ small. 
\end{theorem}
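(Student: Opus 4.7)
The plan is to invoke the classical Hopf bifurcation theorem, after first restricting the flow to remove the zero eigenvalue of $B$ identified in Lemma~\ref{lem:simple0}.

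Step one is the phase-space reduction. Since mass is conserved along trajectories of \eqref{e:dtnell}--\eqref{e:dtn1}, the affine hyperplane
\[
\mathcal{H}=\{\, n\in\R^M : \bmid\cdot n=\bmid\cdot\bar n\,\}, \qquad \bmid=(1,2,\ldots,M),
\]
is invariant. Choosing any linear complement to $\mathrm{span}(\bar v)$ inside $\{\bmid\}^\perp$ and translating by $\bar n$ produces an $(M-1)$-dimensional chart for $\mathcal{H}$ near $\bar n$, in which the dynamics is a smooth ODE with equilibrium at the origin. Since $\bar v$ spans $\ker B$ by Lemma~\ref{lem:simple0} and satisfies $\bmid\cdot\bar v>0$ (as used in the proof of that lemma), the linearization at the origin is the operator on $\CC^M/\mathrm{span}(\bar v)$ induced by $B$, whose spectrum is exactly $\sigma(B)\setminus\{0\}$ with the same algebraic multiplicities.

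Step two is the verification of the Hopf hypotheses at $\kappa=\kappa_j(M)$. The analytic curve $\lambda_j(\kappa)$ from Theorem~\ref{thm:eigsB} part~2(b) satisfies $\re\lambda_j(\kappa_j)=0$ and $\omega_j:=\im\lambda_j(\kappa_j)>0$, so $\pm i\omega_j$ is a pair of purely imaginary eigenvalues of the restricted operator; part~1 of the same theorem makes this pair simple for $M>M_{0,k}$, and the transversality condition $\tfrac{d}{d\kappa}\re\lambda_j(\kappa_j)>0$ is the first inequality in \eqref{e:dlamdk}. For the non-resonance condition, I would argue by counting: the strict monotonicity in \eqref{e:dlamdk} together with $\re\lambda_i(\kappa_i)=0$ shows that $\lambda_1,\ldots,\lambda_{j-1}$ are unstable on $(\kappa_{j-1},\kappa_j]$ and that $\lambda_1,\ldots,\lambda_j$ are unstable on $(\kappa_j,\kappa_{j+1})$, so together with Theorem~\ref{thm:eigsB} part~2(a) these curves exhaust the unstable spectrum on either side of $\kappa_j$; by continuity, no other eigenvalue of $B$ can reach the imaginary axis at $\kappa_j$ without violating one of those two counts.

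With these hypotheses established, the classical Hopf theorem (Chow--Hale, pp.~98--99) yields a branch of small-amplitude nonconstant periodic orbits of the restricted system through the equilibrium at $\kappa=\kappa_j$; pulling back through the chart gives the desired periodic solutions of \eqref{e:dtnell}--\eqref{e:dtn1} on $\mathcal{H}$. I expect the most delicate point of the plan to be the non-resonance step: while Theorem~\ref{thm:eigsB} directly supplies the imaginary eigenvalue pair and the transverse crossing, excluding any further eigenvalue (in particular, one sitting tangentially) on the imaginary axis at $\kappa=\kappa_j$ relies on combining the strict monotonicity in \eqref{e:dlamdk} with the exact spectral count in part~2(a), and in a less favorable setup could demand supplementary information from the detailed spectral analysis underlying Theorem~\ref{thm:eigsB}.
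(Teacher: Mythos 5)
Your proposal takes essentially the same route as the paper: restrict to the invariant mass-conservation hyperplane to eliminate the zero eigenvalue, then invoke the classical Hopf theorem using the simple purely imaginary pair and the transversal crossing supplied by Theorem~\ref{thm:eigsB}. You are right to flag the tangential-eigenvalue worry as the only delicate point: the counting argument from part~2(a) alone cannot exclude an eigenvalue that touches the imaginary axis at $\kappa=\kappa_j$ without crossing. That concern is in fact resolved by the final step of the proof of Theorem~\ref{thm:eigsB} in Section~\ref{s:eigsB}, which shows the stronger statement that for $\kappa\in[\beta_0,\beta_k]$ every nonzero eigenvalue with $\re\hat\lambda\ge 0$ (not merely $>0$) and $\im\hat\lambda\ge 0$ must coincide with $\lambda_j(\kappa)$ for some $j\le k$; this is what underlies the paper's assertion of a unique pair of nonzero purely imaginary eigenvalues at $\kappa=\kappa_j$.
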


We have not managed to determine analytically whether the bifurcating 
solutions are stable (the supercritical case) or not. Many of our
numerical computations, as in Fig.~\ref{f:n1vt},
are consistent with the presence of stable periodic solutions, however.

%[Illustrate eigenvalues around an ellipse. See Fig.~\ref{f:evalsM100K3}]
\begin{figure} \begin{center}
\includegraphics[width=4.5in]{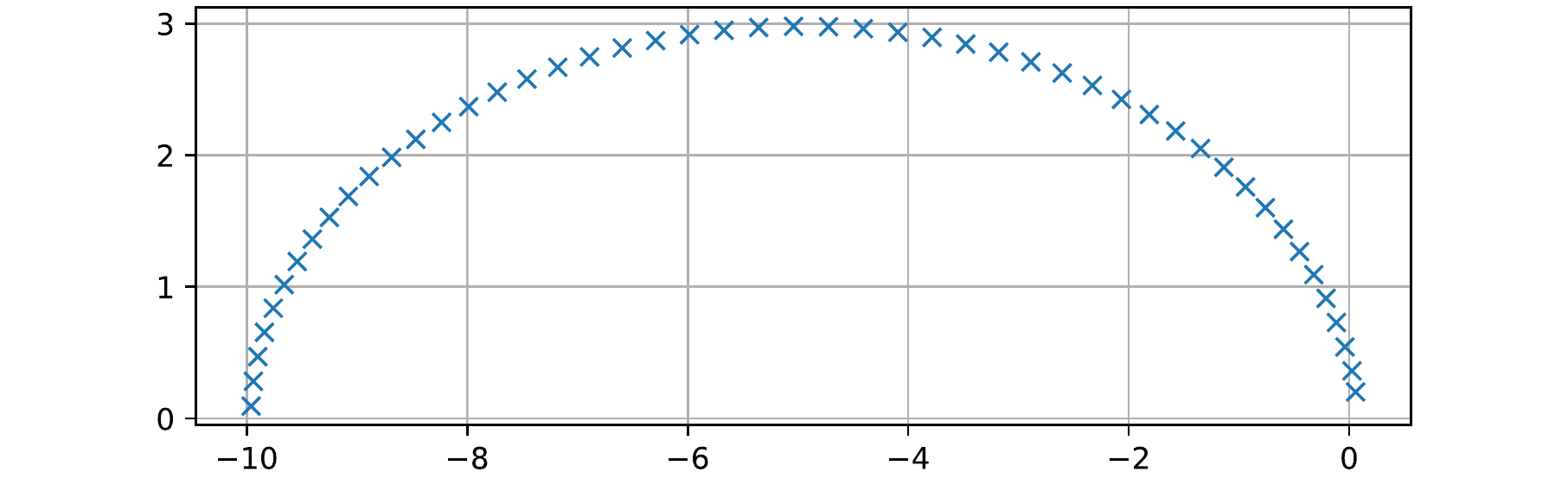}
\end{center}
\caption{Complex eigenvalues of matrix $B$ for $M=100$, $K=3$}
\label{f:evalsM100K3}
\end{figure}

In Figure~\ref{f:evalsM100K3} we illustrate the location of 
the complex eigenvalues of $B$ computed numerically for the parameter
values $M=100$ and $K=3$. The unstable eigenvalues shown correspond
to values
\[
\lambda \approx 0.05836 \pm 0.2014i, \quad
0.02585 \pm 0.3618i.
\]
Besides the real eigenvalue $\lambda=1$ 
this matrix also has a large negative eigenvalue $\lambda\approx-410.94$. 
There are 49 complex-conjugate pairs of eigenvalues that lie close
to an ellipse that we will describe formally in Remark~\ref{r:ellipse}
and Section~\ref{s:formal}. 
The presence of eigenvalues spaced closely along a smooth curve
and a large isolated eigenvalue is reminiscent of the structure of
the spectrum of differential-delay systems with large delay, see
\cite{Lichtner2011}.

The real parts of eigenvectors for the first 3 complex eigenvalues
closest to $\lambda=1$ are plotted in Figure~\ref{f:evecs}.
They appear to have a ``smooth'' structure 
except in a boundary layer near $\ell=M$.
\begin{figure}  \begin{center}
\includegraphics[width=4.5in]{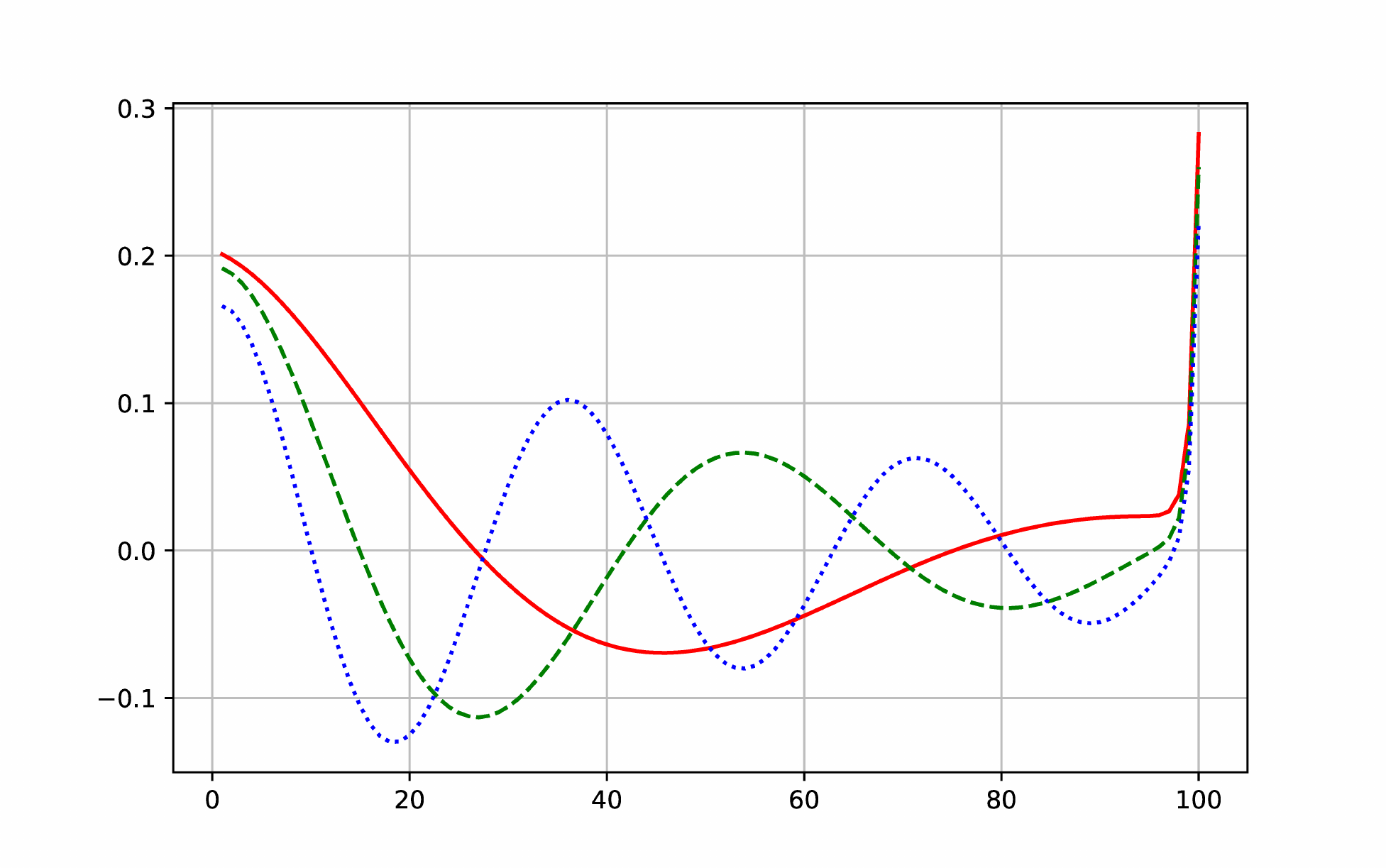}
\end{center}
\caption{Eigenvectors for 3 eigenvalues near $\lambda=1$ for $M=100$, $K=3$}
\label{f:evecs}
\end{figure}

\begin{table}[ht]
\caption{Critical parameters for first Hopf bifurcation}
\begin{center}
\begin{tabular}{cccc}
    \hline
    $M$ & $K$ & $\kappa_1=K\sqrt{M}$ & $\im\lambda$ \\
    \hline
    $10^2$ & 0.39349 & 3.9349   &0.021740\\
    $10^3$ & 0.075016 & 2.3722  & 3.6176e-4 \\
    $10^4$ & 0.020376 & 2.0376  & 9.3596e-6\\
    $10^5$ & 0.0061392 & 1.9414 & 2.7777e-7\\
    $10^6$ & 0.0019118 & 1.9118 & 8.6091e-9\\
\end{tabular}
\end{center}
\label{tab:multicol}
\end{table}

In Table 1 we tabulate for various values of $M$ 
numerically computed critical values of $K$ that correspond to $\kappa_1$,
the value at which the first pair of complex-conjugate eigenvalues crosses
the imaginary axis. 
Eigenvalues were obtained by solving the equation \eqref{e:Feq0} in
Proposition~\ref{p:lambdaphi} below using an iteration method.
The first two rows were computed also
by finding all eigenvalues of $B$ using the julia function eigen.
The values of $\kappa_1$ in the third column can be compared to the 
value $\kappa_{\rm cr}\approx 1.89825$ described below in 
\eqref{e:kappacr}. This value is proved in Section~\ref{s:eigsB}
to be the limiting value of $\kappa_1$ as $M\to\infty$, 
see \eqref{e:kappalim}.

\section{A formal continuum approximation}

Here we describe and heuristically analyze a formal continuum approximation
of the system \eqref{e:dtnell}--\eqref{e:dtn1} that we are studying.
This will serve to preview how the proof of Theorem~\ref{thm:eigsB} will proceed,
and also provides an approximate understanding of 
the origin of the oscillatory instability in the system 
and the smooth structure of the
eigenvectors as shown in Figure~\ref{f:evecs}. 

We introduce a scaled atomization rate and scaled space and time variables via
\begin{equation}\label{d:xtau}
K = \eps\kappa,\quad x = \eps^2 \ell, \quad \tau = \eps^3 t, 
\qquad \eps=\frac1{\sqrt{M}}.
\end{equation}
With these relations, we 
write a continuum approximation of \eqref{e:dtnell}--\eqref{e:dtn1} 
in terms of variables
\[
u(x,\tau)\approx n_\ell(t), \qquad \calJ(x,\tau) \approx J_\ell(t),
\]
as follows: 
The evolution equations for $\D_t n_\ell$, $\ell=2,\ldots,M$ 
in \eqref{e:dtnell}--\eqref{e:dtnM} are approximated by the PDE
\begin{equation}\label{e:Dtu}
\eps^3\D_\tau u +\eps^2\D_x\calJ = 0, \qquad 0<x<1, 
\end{equation}
where the number flux relation $J_\ell = (n_1-1)n_\ell +n_\ell-n_{\ell+1}$ from \eqref{d:Jell} is approximated by 
\begin{equation}\label{d:calJell}
\calJ(x,\tau) = (u(0,\tau)-1)u(x,\tau) - \eps^2\D_x u(x,\tau).
\end{equation}
Taking \eqref{e:dtnell} to hold also for $\ell=M$ with $J_M=Kn_M$,
the following boundary condition replaces \eqref{e:dtnM}:
\begin{equation}
\calJ(1,\tau) = K u(1,\tau)
\end{equation}
We replace the evolution equation for $\D_t n_1$ by an equation 
equivalent to mass conservation, which says
\[
0 = \eps\frac{d}{d\tau}\int_0^1 xu(x,\tau)\,dx = 
-\int_0^1 x\D_x\calJ(x,\tau)\,dx.
\]
After integrating by parts, we require 
\begin{equation}\label{e:Mxcons}
\calJ(1,\tau) = \int_0^1\calJ(x,\tau)\,dx.
\end{equation}

The system of equations \eqref{e:Dtu}--\eqref{e:Mxcons} has
the constant equilibrium $u(x)=A=1+K$ just like the discrete system. 
Using a superposed dot to denote differentiation with respect to a 
variational parameter leads to the linearized system
\begin{align}
0 &= \eps \D_\tau \dot u + \D_x\dot\calJ,
\\
 \dot\calJ(x,\tau) &= (1+\eps\kk)\dot u(0,\tau) + \eps\kk\dot u(x,\tau)-\eps^2\D_x\dot u(x,\tau),
\\ \dot\calJ(1,\tau) &= \eps\kk\dot u(1,\tau),
\\ \dot\calJ(1,\tau) &= % \int_0^1 \dot\calJ(x,\tau)\,dx = 
(1+\eps\kk+\eps^2) \dot u(0,\tau) +\eps\kk\int_0^1\dot u(x,\tau)\,dx - \eps^2\dot u(1,\tau).
\end{align}
Then looking for solutions of the form $e^{\Lambda \tau}v(x)$ 
leads to the eigenvalue problem
\begin{align}
0  &= \Lambda v + \D_x(\kappa v -\eps\D_x v), \qquad 0<x<1,
\label{e:vx}
\\0&= (1+\eps\kk)v(0)-\eps^2\D_x v(1),
\label{e:v0}
\\ 0&= (\eps\kappa+\eps^2)v(1)-\eps\kappa\int_0^1v(x)\,dx -\eps^2\D_x v(1)\frac{1+\eps\kappa+\eps^2}{1+\eps\kappa}.
\label{e:v1}
\end{align}

The steps that we now take to analyze this continuum eigenvalue problem parallel 
the steps that we will take to analyze the discrete eigenvalue problem~\eqref{v:v1}--\eqref{v:vM}.
First, we describe the
solutions of \eqref{e:vx} as linear combinations of solutions of the form
$e^{-zx}$ where
\begin{equation} \label{vzrel}
0 = \Lambda - \kappa z - \eps z^2. 
\end{equation}
It is convenient to write a general solution of \eqref{e:vx} in terms of the two
solutions $z_\pm$ of \eqref{vzrel} as 
\[
v(x)=c_+e^{z_+(1-x)} + c_- e^{z_-(1-x)}.
\]
Then a nonzero solution can satisfy 
the boundary conditions \eqref{e:v0} and \eqref{e:v1} 
if and only if 
\begin{equation} \label{e:detx}
0 = \left| \begin{matrix}f(z_+) & f(z_-)\\ g(z_+)& g(z_-)\end{matrix}\right| ,
\end{equation}
\begin{equation}
 f(z) = e^z(1 +\eps\kappa)+ \eps^2 z, \qquad
 g(z) = \kappa+\eps - \kappa \left(\frac{e^z-1}z\right) +\eps z \left(1+\frac{\eps^2}{1+\eps\kappa}\right).
\end{equation}

So far this is an exact treatment of the eigenvalue problem 
\eqref{e:vx}--\eqref{e:v1}.
But now we approximate, noting that 
the two solutions $z_\pm$ of \eqref{vzrel} satisfy
\begin{equation}\label{zzrel}
z_+ + z_- = -\frac\kk\eps, \quad z_+z_- = -\frac\Lambda\eps,\qquad
z_+ \approx \frac{\Lambda}{\kappa}, \quad z_- \approx -\frac\kappa\eps -\frac\Lambda\kappa,
\end{equation}
for small $\eps$.  For $\kappa>0$ 
and complex $z_+$ both of order $O(1)$, 
we neglect the exponentially small term $e^{z_-}=O(e^{-\kk/\eps})$
and keep only
the leading order terms in the other entries of the matrix in \eqref{e:detx},
expressing $z_-$ in terms of $z_+$ using the relations in \eqref{zzrel}.
Thus, writing $z=z_+$ we make the approximations
\begin{align}
&f(z_+) \approx e^z, \qquad\qquad\qquad\quad f(z_-) \approx \eps^2z_- \approx -\eps\kappa,
\\
&g(z_+) \approx \frac{\kk}z(z+1-e^z), \qquad
g(z_-) \approx (\kk+\eps z_-) \left(1+\frac1{z_-}\right)\approx  -\eps z.
\end{align}
Multiplying the second row of \eqref{e:detx} by $z/\kk$ and dividing the second column by $-\eps\kk$,
we find that the equation determining eigenvalues 
approximately takes the form $Q(z;\kappa)=0$, where
\begin{equation}
Q(z;\kappa):=
 \left| \begin{matrix} e^z & 1
\\ 
z+1-e^z & z^2/\kk^2
\end{matrix}\right| = e^z\left(1+\frac{z^2}{\kappa^2}\right)-(1+z).
\end{equation}

It is exactly this function $Q$ that we will find
responsible for the appearance of unstable eigenvalues
for the discrete problem~\eqref{v:v1}--\eqref{v:vM} in the limit of large $M$.
We analyze the complex roots of $Q$ in depth in Section~\ref{s:Q}.
It turns out that a complex-conjugate pair of roots $z$
emerges into the right half plane $\re z>0$ as $\kk$
increases past each value $\kk=\kk_j^0$ of an infinite sequence.
The values $\kk_j^0$ will be seen to be the limiting values of
$\kk_j(M)$ as appear in the statement of Theorem~\ref{thm:eigsB},
in the limit $M\to\infty$.

In the remainder of this paper, we carry out the 
proof of Theorem~\ref{thm:eigsB} by performing an
analogous analysis for the discrete eigenvalue problem,
including rigorous estimates for all the error terms.
For brevity's sake, we forgo the 
formulation and rigorous demonstration of 
results analogous to Theorems~\ref{thm:eigsB} and \ref{thm:hopf}
for the (parabolic) continuum model \eqref{e:Dtu}--\eqref{e:Mxcons}
described in this section.  
It should be evident from our analysis, though, 
that Hopf bifurcation occurs for this model in a similar way.

\section{Reformulation of the eigenvalue equation}\label{s:reform}

\subsection{The difference equation}
The eigenvalue equations \eqref{v:vell} for $2\leq\ell\leq N$ 
comprise a family of second order difference equations. 
These difference equations have solutions of the form
\begin{equation}
 V_\ell = c\vp^{M-\ell} \,, \quad 1\leq\ell\leq M,
\label{e:Vpow}
\end{equation}
whenever 
\begin{equation}
\lambda = K(\vp -1) + (\vp - 2 + \vp\inv)\,.
\label{d:lambdavp1}
\end{equation}
which we can rewrite using $A=K+1$ as 
\begin{equation}
\lambda +A+1 = A\vp + \vp\inv\,,
\label{d:lambdavp2}
\end{equation}
or as
\begin{equation}
A\vp^2 -(\lambda+A+1)\vp + 1 = 0.
\label{d:lambdavp}
\end{equation}
We take decreasing powers in \eqref{e:Vpow} for reasons of scaling explained below.

We can then ``connect" the values of $V_{1}$ and $V_{M}$ by means of
a transition matrix depending on two constants (for each value of $\lambda$).
More precisely, any solution of \eqref{v:vell} takes the form
\begin{equation}
V_{\ell}=
 c_{1}\left( \vp_{1}\right) ^{M-\ell}
+c_{2}\left( \vp_{2}\right) ^{M-\ell}\,, 
\quad 1\leq\ell\leq M, \label{A1E4}%
\end{equation}
whenever $\vp_{1}$ and $\vp_{2}$ are distinct roots of \eqref{d:lambdavp}.
Evidently the two roots are always related by 
$\vp_1\vp_2 = 1/A$, and for $\lambda=0$ the roots are 
$\vp_1=1$ and $\vp_2 = 1/A$.  

The roots are distinct except when $\vp_1 = \pm A^{-1/2}$,
which corresponds to 
\begin{equation}\label{e:spurious1}
\lambda=-1-A\pm2\sqrt A.
\end{equation}
For small $K>0$, we note that this becomes
\begin{equation}
\lambda = -2-K\pm 2\sqrt{1+K}\approx
\begin{cases} -K^2/4 & \text{for}\  +\,,\cr -4-2K & \text{for}\  - .
\end{cases}
\end{equation}

The roots $\vp_1$, $\vp_2$ are naturally functions of $\lambda$.
However, it will be more convenient to recast the 
eigenvalue equations in terms of the variable $\vp$ and regard
$\lambda$ as a function of $\vp$, given by the 
following equation equivalent to \eqref{d:lambdavp2}:
\begin{equation}
\lambda = \left(A - \vp\inv\right)(\vp-1)\,.
\label{dd:lambdavp}
\end{equation}
Except when $\vp=\pm A^{-1/2}$ (which will generate spurious roots below),
corresponding to $M$ eigenvalues $\lambda$ there should exist
$2M$ roots $\vp$ of the relevant equations, 
which occur in pairs $\vp$, $1/(A\vp)$ that produce the same $\lambda$.

\begin{remark}\label{r:ellipse}
We note that by \eqref{d:lambdavp2}, 
values of $\vp$ on the unit circle, with $\vp=e^{is}$ for $s$ real,
produce values of $\lambda$ on an ellipse with 
\begin{equation}\label{e:ellipse}
\lambda = (2+K)(-1+\cos s) + iK \sin s
\end{equation}
This ellipse lies in the left half plane and passes through $\lambda=0$. 
In numerical computations such as those reported in Fig.~\ref{f:evalsM100K3},
almost all the eigenvalues lie near this ellipse.
By consequence we will expect to find most roots satisfying 
$|\vp_1|\approx 1$ and $|\vp_2|\approx 1/A<1$, with $|\vp_2^M|$
extremely small. 
(This is the basic reason for the form we took in \eqref{e:Vpow}.)
The possibility of transition to instability will depend upon the 
deviation of roots $\vp_1$ from this ellipse in the vicinity 
$\vp\approx1$ where $\lambda\approx0$.
\end{remark}

\subsection{Reduction to a \texorpdfstring{$2\times2$}{2x2} determinant}

We now use the expression \eqref{A1E4} to write the ``boundary conditions'' for $V$,
that correspond to the equations for $V_M$ and $V_1$ in \eqref{v:vM} and \eqref{v:v1}
respectively. Using the fact that \eqref{d:lambdavp2} holds for both $\vp_1$ and $\vp_2$,
after some computation we find that these equations take the following form:
\begin{align}
0 &= c_{1}\left(  A\varphi_{1}^{M-1}+1-\varphi_{1}^{-1}\right)  +c_{2}\left(
A\varphi_{2}^{M-1}+1-\varphi_{2}^{-1}\right)  \,,
\label{S1E6}%
\\[5pt]
0  &= c_{1}\left(  -A\varphi_{1}^{M}-AM\varphi_{1}^{M-1}-K\frac{\varphi
_{1}^{M}-\varphi_{1}}{\varphi_{1}-1}+KM+1\right)  \nonumber\\
&  +c_{2}\left(  -A\varphi_{2}^{M}-AM\varphi_{2}^{M-1}-K\frac{\varphi_{2}%
^{M}-\varphi_{2}}{\varphi_{2}-1}+KM+1\right)  \,.
\label{S1E7}%
\end{align}
Except in the degenerate cases when $\vp_1=\vp_2=\pm A^{-1/2}$ and \eqref{e:spurious1} 
holds, the eigenvalue problem in \eqref{v:v1}--\eqref{v:vM} is therefore equivalent
to the vanishing of a determinant:
\begin{equation}
\delta\left(  \varphi\right)  =
\left\vert \begin{array} [c]{cc}%
f\left(  \varphi_{1}\right)  & f\left(  \varphi_{2}\right) \\
g\left(  \varphi_{1}\right)  & g\left(  \varphi_{2}\right)
\end{array} \right\vert =0 \,, 
\,,\label{e:del}%
\end{equation}
where $\vp_1=\vp$ and $\vp_2=1/(A\vp)$, and the functions $f$, $g$ are given by %
\begin{align*}
f\left(  \varphi\right)  &=A\varphi^{M-1}+1-\varphi^{-1} \,,\\%
g\left(  \varphi\right)  &=-A\varphi^{M}-AM\varphi^{M-1}-K\frac{\varphi
^{M}-\varphi}{\varphi-1}+KM+1 
\\ & = -\varphi^M\left(\frac{AM}\varphi+\frac{K}{\varphi-1} +A\right)
+\frac{K\varphi}{\varphi-1}+KM+1 \,.
\end{align*}
The function $\delta$ depends on $M$ and $K$, but this dependence
will not be written explicitly for simplicity.
We note the general root-exchange symmetry 
\begin{equation}\label{root:sym}
\delta\left(\frac 1{A\vp}\right) =-\delta(\vp) .
\end{equation}
Because $\lambda=0$ is an eigenvalue we also know that $\delta$
has roots at $\vp=1$ and $1/A$.
Note that $\delta(\pm A^{-1/2})=0$ due to dependence of the columns,
but these roots are spurious, unless double, as we now discuss.

{\em The degenerate case.}
In the cases of \eqref{e:spurious1} when the two roots of 
\eqref{d:lambdavp} coincide at $\vp=\vp_1=\pm A^{-1/2}=\vp_2$, 
one checks that the difference equation \eqref{v:vell} has the general solution
\begin{equation}
V_{\ell}=
 \hat c_{1} \vp^{M-\ell}
+ \hat c_{2} (M-\ell) \vp^{M-\ell-1}\,, 
\quad 1\leq\ell\leq M, \label{esp:vell}%
\end{equation}
by the expedient of replacing $c_1$, $c_2$ in \eqref{A1E4} with
\[
c_1 = \hat c_1 - \frac{\hat c_2}{\vp_2-\vp_1} , 
\qquad
c_2 = \frac{\hat c_2}{\vp_2-\vp_1} ,
\]
and taking $\vp_1\to \pm A^{-1/2}$. Doing the same with \eqref{S1E6}--\eqref{S1E7},
we see that the eigenvalue condition \eqref{e:del} is replaced by the condition 
\begin{equation}
\hat\delta\left(  \varphi\right)  =\left\vert
\begin{array}
[c]{cc}%
f\left(  \varphi\right)  & f'\left(  \varphi\right) \\
g\left(  \varphi\right)  & g'\left(  \varphi\right)
\end{array}
\right\vert =0  
\qquad\mbox{at $\vp=\pm A^{-1/2}$.}
\label{esp:del}%
\end{equation}
This is equivalent to the condition $\delta'(\vp)=0$
because one finds $\delta'(\vp)=-2\hat\delta(\vp)$ at these points. 

\begin{remark} 
In order to characterize Hopf bifurcation, we will use the fact
that when $\vp\ne\pm A^{-1/2}$, 1 or $A\inv$, 
$\vp$ is a simple root of $\delta(\vp)$ if and only if 
$\lambda=(A-\vp\inv)(\vp-1)$ is a simple eigenvalue of $B$.
See Lemma~\ref{lem:simpleBF} and its proof in Section~\ref{s:eigsB}.
\end{remark}

\subsection{Sorting terms and removing singularities}
For convenience in analysis, we sort the terms in \eqref{e:del}
according to $M$th powers of $\vp$ and $A$.  Note that
\begin{align}
f(\vp_2) &= (1-A\vp) + (A\vp)^{-M} A^2\vp \,, 
\\
g(\vp_2) &= \left(\frac{K}{1- A\vp} + KM+1 \right)
- (A\vp)^{-M}\left(MA^2\vp + \frac{KA\vp}{1-A\vp} + A\right).
\end{align}
In order to remove singularities, we multiply \eqref{e:del} by
$\vp(\vp-1)(1-A\vp)$. Define
\begin{equation}
F(\vp) := \delta(\vp)\cdot \vp(\vp-1)(1-A\vp) = \left\vert
\begin{array}[c]{cc}
f_1 & f_2 \\ g_1 & g_2
\end{array}
\right\vert\,,
\end{equation}
where 
\begin{align*}
f_1 &= f(\vp)\cdot \vp = (\vp -1) + \vp^M A \,, \\
f_2 &= f(\vp_2) = (1-A\vp) + (A\vp)^{-M} A^2\vp \,, \\
g_1 &= g(\vp)\cdot \vp(\vp-1)(1-A\vp) \quad = G_1 - \vp^M G_2 \,,\\
g_2 &= g(\vp_2)\cdot (\vp-1)(1-A\vp) \quad = H_1 - (A\vp)^{-M} H_2 \,,
\end{align*}
with the definitions %{\red [Remark: $G_1=Y_2$ and $G_2=Y_1$ in previous notation]}
\begin{align}
   G_1 &= \vp (1-A\vp) (K\vp + (KM+1)(\vp-1) )    \,, \label{d:G1}\\ 
  G_2 &= (1-A\vp) ( AM(\vp-1)+  K\vp+A\vp(\vp-1) ) \,,  \label{d:G2}\\ 
  H_1 &= (\vp-1) (K + (KM+1)(1-A\vp) )           \,,  \label{d:H1}\\ 
  H_2 &= (\vp-1) ( (A+MA^2\vp)(1-A\vp) + KA\vp ) \,.  \label{d:H2}
\end{align}
By consequence we have the sorted representation
\begin{equation}
\boxed{
F(\vp) = -P_1+ \vp^M P_2 + A^{-M} R_1+ (A\vp)^{-M} R_2,
}
\label{F:sort}
\end{equation}
where
\begin{align*}
-P_1 &= 
\left| \begin{matrix}
\vp - 1 & 1-A\vp \\
G_1 & H_1
\end{matrix}\right\vert
\,, \qquad 
P_2 = 
\left| \begin{matrix}
A & 1-A\vp \\
-G_2 & H_1
\end{matrix}\right\vert
\,, \\
R_1 &= 
\left| \begin{matrix}
A & A^2\vp \\
-G_2 & -H_2
\end{matrix}\right\vert
\,, \qquad 
R_2 = 
\left| \begin{matrix}
\vp -1 & A^2\vp \\
G_1 & - H_2 
\end{matrix}\right\vert
\,. 
\end{align*}
Observe that $F$ has a pole at $\vp=0$ of order $M$,
with $F(\vp)\sim -A^{1-M}\vp^{-M}$, 
because $R_2 = H_2= -A$ at the origin.
And for $|\vp|\to\infty$ we find that
\[
F(\vp) \sim \vp^M P_2 \sim \vp^{M}A\vp G_2\sim A^3\vp^{M+4}.  
\]
Consequently $F$ must have exactly $2M+4$ zeros, counting multiplicities.

We may summarize the situation as follows.
\begin{proposition}\label{p:lambdaphi}
A complex number
$\lambda$ is an eigenvalue of $B$
if and only if \eqref{dd:lambdavp} holds for some 
pair $\vp$, $1/A\vp$ satisfying
\begin{equation} \label{e:Feq0}
F(\vp)=0,
\end{equation}
except in the two cases 
$\lambda=-1-A\pm2\sqrt A$. 
In these cases, $\lambda$ is an eigenvalue if and only if 
\begin{equation}
F(\vp)=F'(\vp)=0 \qquad\mbox{at $\vp = \pm A^{-1/2}$.}
\label{esp:Fp}
\end{equation}
\end{proposition}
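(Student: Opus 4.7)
The plan is to assemble the computations already developed in Section~\ref{s:reform}, keeping careful track of which zeros of $F$ actually encode eigenvalues of $B$ and which merely arise from the multiplier $h(\vp) := \vp(\vp-1)(1-A\vp)$ used to clear the singularities of $\delta$.

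\textbf{Nondegenerate case.} When $\lambda \neq -1 - A \pm 2\sqrt{A}$, the quadratic~\eqref{d:lambdavp} has two distinct roots $\vp_1 = \vp$ and $\vp_2 = 1/(A\vp)$, so every solution of the interior recurrence~\eqref{v:vell} has the form~\eqref{A1E4}. Substituting this representation into the boundary equations~\eqref{v:v1} and~\eqref{v:vM} yields the homogeneous $2\times 2$ system~\eqref{S1E6}--\eqref{S1E7} in $(c_1,c_2)$, which admits a nonzero solution precisely when $\delta(\vp) = 0$. Because $F = \delta\cdot h$ by construction, it follows that $F(\vp) = 0$ if and only if either $\delta(\vp) = 0$ or $\vp \in \{0,1,1/A\}$. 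The point $\vp = 0$ is discarded because $F$ carries a pole of order $M$ there; the points $\vp = 1$ and $\vp = 1/A$ both map to $\lambda = 0$ under~\eqref{dd:lambdavp}, and this value is always an eigenvalue of $B$ by Lemma~\ref{lem:simple0}. Thus in the nondegenerate range the stated biconditional holds.

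\textbf{Degenerate case.} When $\lambda = -1 - A \pm 2\sqrt{A}$, the quadratic~\eqref{d:lambdavp} has the double root $\vp = \pm A^{-1/2}$ and the ansatz~\eqref{A1E4} degenerates to~\eqref{esp:vell}. Substituting the latter into the boundary equations produces the modified condition $\hat\delta(\vp) = 0$ from~\eqref{esp:del}. At $\vp = \pm A^{-1/2}$ the two columns of the matrix defining $\delta$ coincide, so $\delta(\vp) = 0$ there automatically. Since $A > 1$ makes $h(\pm A^{-1/2}) \neq 0$, we obtain $F(\vp) = 0$ automatically, and differentiating $F = \delta h$ while using $\delta(\vp) = 0$ gives $F'(\vp) = \delta'(\vp) h(\vp)$. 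Hence $F'(\vp) = 0$ is equivalent to $\delta'(\vp) = 0$, i.e., to $\hat\delta(\vp) = 0$, via the identity $\delta'(\vp) = -2\hat\delta(\vp)$ noted after~\eqref{esp:del}. This delivers the desired characterization in the degenerate case.

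The only step that requires a bit of care is the identity $\delta'(\vp) = -2\hat\delta(\vp)$ at the double-root points. I would establish it by regarding $\vp_1 = \psi$ and $\vp_2 = 1/(A\psi)$ as analytic functions of a single parameter $\psi$, expanding each column of the matrix in~\eqref{e:del} to first order in $\vp_2 - \vp_1$ about $\psi = \pm A^{-1/2}$, and invoking multilinearity of the determinant: the $O(\vp_2 - \vp_1)$ contribution reproduces exactly the derivative column that defines $\hat\delta$, while the factor $-2$ emerges from the calculation $d(\vp_2 - \vp_1)/d\psi = -1/(A\psi^2) - 1 = -2$ at $\psi = \pm A^{-1/2}$. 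Beyond this brief verification, the remainder of the argument is bookkeeping that follows directly from the reductions already carried out in Section~\ref{s:reform}.
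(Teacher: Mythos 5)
The paper states Proposition~\ref{p:lambdaphi} as a summary of the reductions in Section~\ref{s:reform} rather than giving a separate proof, and your proposal assembles exactly that development (distinct-root case via $\delta=0$, degenerate case via $\hat\delta=0$, identification of the spurious zeros of the multiplier $h$). Your derivation of the identity $\delta'(\vp)=-2\hat\delta(\vp)$ at $\vp=\pm A^{-1/2}$, which the paper asserts without justification, is correct.
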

Of the $2M+4$ roots of $F$, four are spurious,  counting
$\vp=\pm A^{-1/2}$, $\vp=1$ and $\vp=1/A$ once each,
coming from the dependence of the columns in \eqref{e:del}
and the factors used to remove singularities from $\delta$.  

The polynomial 
$\vp^M F(\vp)$ of degree $2M+4$ is divisible by the factor
\begin{equation}
S(\vp) = (\vp-1)(A\vp-1)(A\vp^2-1) ,
\label{d:Svp}
\end{equation}
and the remaining $2M$ roots of $\vp^M F(\vp)/S(\vp)$
correspond in pairs $\vp$, $1/(A\vp)$ to the $M$ eigenvalues of $B$. 
The values $\vp=1$ and $1/A$, are (at least) double roots of $F$
because they were already roots of $\delta$, and correspond 
to the simple eigenvalue $\lambda=0$.
Concerning other roots of $F$, we have the following result
whose proof we defer to Section~\ref{s:eigsB}.

\begin{lemma}\label{lem:simpleBF}
Suppose $S(\vp)\ne0$ %$\vp\ne\pm A^{-1/2}$ 
and $\lambda= (A-\vp\inv)(\vp-1)$. %{\red\ne -A^{3-M}}$.
Then $\vp$ is a simple root of $F$ if and only if 
$\lambda$ is a simple eigenvalue of $B$.
\end{lemma}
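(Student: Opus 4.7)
The plan is to establish the polynomial identity
\begin{equation}\label{e:FPSidentity}
A^{M-1}\,F(\vp) \;=\; p(\lambda(\vp))\,S(\vp),
\end{equation}
where $p(\lambda)=\det(\lambda I-B)$ is the characteristic polynomial of $B$ and $\lambda(\vp)=(A-\vp\inv)(\vp-1)$. Granted \eqref{e:FPSidentity}, the lemma is immediate. The hypothesis $S(\vp_0)\neq 0$ forces $A\vp_0^2\neq 1$, hence $\lambda'(\vp_0)=A-\vp_0^{-2}\neq 0$, so $\vp\mapsto\lambda(\vp)$ is a local biholomorphism at $\vp_0$; since $S$ does not vanish there, multiplication by $S$ preserves the order of zero. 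Therefore the multiplicity of $\vp_0$ as a root of $F$ equals the multiplicity of $\lambda_0=\lambda(\vp_0)$ as a root of $p$, i.e., the algebraic multiplicity of $\lambda_0$ as an eigenvalue of $B$; in particular, $\vp_0$ is a simple root of $F$ iff $\lambda_0$ is a simple eigenvalue of $B$.

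To prove \eqref{e:FPSidentity}, I would first multiply through to clear denominators and obtain the polynomial identity $\vp^M A^M F(\vp) = A\vp^M p(\lambda(\vp))S(\vp)$, both sides being polynomials in $(\vp,K)$ of degree $2M+4$ in $\vp$. Using \eqref{F:sort}, the leading coefficient in $\vp$ of $\vp^M F$ is $A^3\vp^{2M+4}$ (from the $\vp^M P_2$ term, whose leading coefficient originates in $(1-A\vp)G_2$); meanwhile $\lambda(\vp)\sim A\vp$ and $S(\vp)\sim A^2\vp^4$ at infinity yield leading coefficient $A^{M+2}\vp^{2M+4}$ for $\vp^M p(\lambda(\vp))S(\vp)$. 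After the scalings by $A^M$ and $A$ respectively, both sides share the leading term $A^{M+3}\vp^{2M+4}$, so their difference has degree $\le 2M+3$.

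Next, I would match zero multiplicities. By Proposition~\ref{p:lambdaphi}, the analysis of $F$ in Section~\ref{s:reform}, and the simplicity of $\lambda=0$ from Lemma~\ref{lem:simple0}, the polynomial $\vp^M F$ has exactly double roots at $\vp=1$ and $\vp=1/A$ (corresponding to the always-simple eigenvalue $\lambda=0$ combined with the explicit factor $(\vp-1)(1-A\vp)$ in the definition of $F$), together with simple spurious roots at $\vp=\pm A^{-1/2}$. For $K$ in a Zariski-open dense subset of $(0,\infty)$ on which the $M-1$ remaining eigenvalues of $B$ are simple and distinct from the degenerate values $-1-A\pm 2\sqrt A$, the polynomial $\vp^M F$ has in addition $2(M-1)$ simple roots paired under $\vp\leftrightarrow 1/(A\vp)$, giving total multiplicity $2M+4$. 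The right side of \eqref{e:FPSidentity} has identical zero structure: simple zero of $p(\lambda(\vp))$ combined with simple zero of $S$ at $\vp=1,1/A$ gives double zeros, $S$ alone provides simple zeros at $\pm A^{-1/2}$, and $p(\lambda(\vp))$ has a simple zero at each of the $2(M-1)$ preimages of the nonzero simple eigenvalues (using $\lambda'\neq 0$ there). Thus for such generic $K$ the difference of the two polynomial sides, of degree $\le 2M+3$, vanishes with total multiplicity $\ge 2M+4$ and is therefore identically zero. Polynomial continuation in $K$ extends \eqref{e:FPSidentity} to all $K>0$.

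The main obstacle is the genericity step, namely verifying that the reduced discriminant of $p$ (after dividing out the guaranteed factor $\lambda$) and the resultants of $p$ with $\lambda+1+A\mp 2\sqrt A$ are not identically zero as polynomials in $K$. Each is a polynomial in $K$, and its non-vanishing can be confirmed by evaluation at a single convenient value---for example any $K$ for which direct numerical computation of the spectrum of $B$ (as in Figure~\ref{f:evalsM100K3}) exhibits $M-1$ distinct simple nonzero eigenvalues avoiding $-1-A\pm 2\sqrt A$.
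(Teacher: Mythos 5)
Your proposal takes a genuinely different route from the paper, which argues directly and locally: the paper shows $\ker(B-\lambda I)$ is one-dimensional (using that $\vp f(\vp)=A\vp^M+\vp-1$ is convex, so $f(\vp_1)$ and $f(\vp_2)$ cannot both vanish), then reduces the existence of a generalized eigenvector to the solvability of \eqref{e:hatc} and shows via the left null vector $(g_1,-f_1)$ that solvability fails precisely when $\delta'(\vp)\neq 0$, which is equivalent to $F'(\vp)\neq 0$ under the hypothesis $S(\vp)\neq 0$. No global information about the root structure of $F$ or $p$ is needed. Your polynomial identity $A^{M-1}F(\vp)=p(\lambda(\vp))S(\vp)$, if established, is conceptually cleaner and immediately implies the lemma, and there is a nice supporting observation you could add: the root-exchange symmetry \eqref{root:sym} gives $F(1/(A\vp))=-F(\vp)/(A^2\vp^4)$ and $S(1/(A\vp))=-S(\vp)/(A^2\vp^4)$, so $F/S$ is invariant under $\vp\mapsto 1/(A\vp)$ and hence a function of $\lambda$, consistent with your claim.

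However, there are two genuine gaps. First, your assertion that $\vp^M F$ has exactly double roots at $\vp=1$ and $\vp=1/A$ is justified by appealing to "the always-simple eigenvalue $\lambda=0$," but the step from "$\lambda=0$ is a simple eigenvalue of $B$" to "$\delta$ has a simple zero at $\vp=1$" is a special case of precisely the multiplicity correspondence the lemma asserts, and the paper explicitly notes in Section~\ref{s:reform} that it has not characterized multiplicities at this stage. This can be repaired by a counting argument rather than an appeal to the lemma: the $2(M-1)$ distinct preimages $\vp_i,1/(A\vp_i)$ of the (generically) distinct nonzero eigenvalues, plus the two simple spurious roots $\pm A^{-1/2}$, plus at-least-double roots at $\vp=1,1/A$ (both $\delta$ and the explicit factor $\vp(\vp-1)(1-A\vp)$ vanish there), already total $\geq 2M+4$, and since $\deg(\vp^M F)=2M+4$, equality is forced. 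Second, and more seriously, your argument needs to exhibit at least one $K>0$ for which all $M-1$ nonzero eigenvalues of $B$ are simple and avoid $-1-A\pm 2\sqrt A$; you propose to verify this by "direct numerical computation," which is not a proof, and since $M$ is arbitrary this would have to be done for every $M$. Without a concrete argument for this genericity (e.g., an asymptotic analysis, or a clean choice of $K$ where the spectrum is explicitly computable and verifiably simple), the polynomial-continuation step has no basis point. The paper's direct approach sidesteps both issues entirely, which is presumably why it was chosen.

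Also a minor technical point: $\sqrt A=\sqrt{1+K}$ is not a polynomial in $K$, so the resultant you propose with $\lambda+1+A\mp 2\sqrt A$ should be replaced by the resultant with $(\lambda+1+A)^2-4A$ to stay in the polynomial ring.
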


\section{Formal approximation}\label{s:formal}

Before we begin a rigorous analysis of the zeros of $F(\vp)$, 
we treat the problem approximately in the limit of large $M$ 
to gain insight. Numerical experimentation suggests that
we can expect to find most solutions of \eqref{e:del}
to satisfy $|\varphi_1|\approx 1$, and $|\varphi_2|\approx A^{-1}<1$,
with $A^{-M}$ extremely small.

Thus we neglect the terms containing $A^{-M}$ in \eqref{F:sort} and 
study the zeros of 
\begin{equation}
\label{d:F0}
\boxed{
F_0(\vp) := -P_1(\vp)+\vp^M P_2(\vp) \,.
}
\end{equation}
For any such zero, evidently 
\begin{equation}
\vp^M = \frac{P_1(\vp)}{P_2(\vp)} \,,
\label{e:vpMrat}
\end{equation}
unless both numerator and denominator vanish. 
The right-hand side  is a ratio of polynomials of low degree,
while for large $M$, the function $\vp\mapsto \vp^M$ expands a small region
about any $M$th root of unity $e^{2\pi ik/M}$ 
to cover a large part of the complex plane. 
Roughly, then, we can expect \eqref{e:vpMrat} to have a solution near 
each $M$th root of unity. 
These should then provide eigenvalues 
spread out around the ellipse in \eqref{e:ellipse}.

We focus next on looking for imaginary roots $\vp\approx1$.
We change variables from $\vp$ to $z=M(\vp-1)$, %
noting that
\begin{equation}
\vp^M = \left(1+\frac{z}{M} \right)^M \to e^z 
\quad\text{ as }\ \ M\to\infty.
\end{equation}
With these relations we have 
\[
A\vp -1 = K+A(\vp-1) = K+\frac{Az}{M}
= K + \frac{z}{M} +\frac{Kz}{M}\,,
\]
and we find from \eqref{d:G1}--\eqref{d:H1} the exact expressions 
\begin{align}
G_1 &= -\left( 1 + \frac{z}{M} \right)
\left(K+\frac{Az}{M}\right)
\left( K
\left( 1 + \frac{z}{M} \right)
+ \left(K+\frac{1}{M}\right) z
\right),
\nonumber \\
G_2 &= -\left(K+\frac{Az}{M}\right) Az
- \left(K+\frac{Az}{M}\right)^2 
\left( 1 + \frac{z}{M} \right),
\nonumber\\
H_1 &= z \left( \frac{K}{M}- \left(K+\frac{1}{M}\right) 
\left(K+\frac{Az}{M}\right)  \right) 
\nonumber\\ &= 
  z \left(- K^2 -  \left(K+\frac{1}{M}\right) \frac{Az}{M}\right)\,.
\label{e:H1z}
\end{align}
It turns out to be appropriate to require $K$ is small while $KM$ is large.
Somewhat more precisely, we ask that
\begin{equation}
K = O(\eps) \quad\text{as}\ \ \eps:=\frac{1}{\sqrt{M}} \to0.
\end{equation}
Then we get the approximate relations
\begin{align}
G_1 &= -K^2 (1+z) + O(\eps^{3}) \,,
\nonumber\\
G_2 &= -K(1+K)z - \frac{z^2}{M} - K^2 
+O(\eps^{3})\,,
\nonumber\\
H_1 &= - K^2 z - \frac{Kz^2}{M} + O(\eps^4)\,.
\end{align}
By consequence, we find that 
\begin{align*}
P_1 &= K^3 (1+z) + O(\eps^4),
\\
P_2 &=
(1+K)\left(-K^2 z - \frac{Kz^2}{M} \right)+O(\eps^4)
\\ 
&\quad  +\left(K+\frac{z}{M}\right)
\left(K(1+K)z+\frac{z^2}{M} +K^2\right)
\\
&=   \frac{Kz^2}{M}  +K^3 +O(\eps^4) .
\end{align*}
If we suppose $K\sim \kk/\sqrt{M}$ as $M\to\infty$, then
\begin{equation}
K^{-3} F_0\left(1+\frac{z}{M}\right) \to  Q(z;\kk)
\label{d:Phiza}
\end{equation}
where 
\begin{equation}
\boxed{
Q(z;\kk) := e^z\left(1+\frac{z^2}{\kk^2} \right)-(1+z)
}
\label{d:Qa}
\end{equation}

The complex roots of $Q$ provide an approximation for 
roots of $F_0(\vp)$ when $M$ is large. These approximate 
eigenvalues $\lambda$ of \eqref{v:v1}--\eqref{v:vM}
through \eqref{d:lambdavp1}, which may be written directly 
in terms of $z$ as 
\begin{equation}
\lambda = \frac{Kz}{M} + \frac{z^2/M^2}{1+z/M} = 
\frac{Kz}{M} + O(\eps^4).
\label{e:lambdaz}
\end{equation}
Thus purely imaginary roots of $Q$ approximate 
eigenvalues $\lambda$ near the imaginary axis,
and roots of $Q$ in the right half plane $\re z>0$
should approximate eigenvalues satisfying $\re\lambda>0$.

\section{Analysis of roots of \texorpdfstring{$Q$}{Q}}\label{s:Q}

In this section we establish basic properties of the roots $z$
of $Q(z;\kk)$ as defined in \eqref{d:Qa}. 
This will serve as the foundation to analyze the roots of $F_0$
and ultimately those of $F$, in subsequent sections.

Purely imaginary roots $z=it$ of $Q$ occur whenever
\begin{equation}
e^{it}\left(1-\frac{t^2}{\kk^2}\right) = 1+it\,.
\label{e:Qcond}
\end{equation}
Matching real parts demands that if $t\ne0$ then
$\sec t=1-\frac{t^2}{\kappa^2}<1$ hence $\sec t\le-1$.
Matching also the ratio of imaginary to real parts,
one finds that \eqref{e:Qcond} holds if and only if
\begin{equation}
\cos t<0
\quad\mbox{and}\quad 
\tan t  =t,
\label{e:ttant}
\end{equation}
together with
\begin{equation}
\kk^2 = \frac{t^2}{1-\sec t} %= -\sec t - 1 
= \sqrt{1+t^2}-1.
\label{e:alphat}
\end{equation}
Each positive root of \eqref{e:ttant} 
provides a complex conjugate pair of imaginary roots $z=\pm it$ of $Q$.
Let $t_1<t_2<\ldots$ denote the increasing sequence of all these positive roots of \eqref{e:ttant}.
The smallest occurs for $t=t_1\approx 4.4934095$
(less than $\frac32 \pi \approx 4.71238898$). \
This corresponds to a critical value of $\kk$ given by 
\begin{equation}\label{e:kappacr}
\kk_{\rm cr} := (\sqrt{1+t_1^2}-1)^{1/2} \approx 1.89825.
\end{equation}
The roots $t_k$ approach $\frac32\pi+2\pi k$ from below as $k\to\infty$.
As $k$ increases, they correspond to larger values of $\kk^2$, 
hence larger values of $K$ for a fixed $M$.

In the rest of this section, we shall prove that non-real roots of $Q$ are always simple,
and purely imaginary roots must move into the right half plane $\re z>0$
as $\kk$ increases, where they must remain in a bounded region. 
By this result and \eqref{e:lambdaz}, when $\kk>\kk_{\rm cr}$
we can expect that for large enough $M$ with $K\sim \kk/\sqrt{M}$, 
there will be some eigenvalue $\lambda$ of \eqref{v:v1}--\eqref{v:vM}
in the right half plane, and when $0<\kk<\kk_{\rm cr}$
we can expect there will not.

\begin{lemma}
For any $\kk>0$, $Q$ has a double root $z=0$. All
other complex roots are non-real and simple.
\end{lemma}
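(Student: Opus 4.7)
The proof plan has three parts.

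First, direct computation shows $z=0$ is exactly a double root. One computes $Q(0;\kk)=0$, and since
\[
\partial_z Q(z;\kk) = e^z\!\left(1+\frac{z^2+2z}{\kk^2}\right)-1,
\qquad
\partial_z^2 Q(z;\kk) = e^z\!\left(1+\frac{z^2+4z+2}{\kk^2}\right),
\]
we get $\partial_z Q(0;\kk)=0$ and $\partial_z^2 Q(0;\kk) = 1+2/\kk^2 \neq 0$.

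Second, I would show that $z=0$ is the only real root. The clean way is to introduce $\tilde Q(z) := e^{-z}Q(z;\kk) = 1+z^2/\kk^2 - e^{-z}(1+z)$, which has the same real zeros as $Q$ because $e^{-z}>0$. A direct differentiation yields
\[
\tilde Q'(z) = \frac{2z}{\kk^2} + z e^{-z} = z\!\left(\frac{2}{\kk^2}+e^{-z}\right).
\]
Since the second factor is strictly positive on $\mathbb{R}$, $\tilde Q$ is strictly decreasing on $(-\infty,0)$ and strictly increasing on $(0,\infty)$, so it attains a unique minimum at $z=0$ with $\tilde Q(0)=0$. Hence $\tilde Q(z)>0$ for real $z\neq 0$.

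Third, and the main point, I would rule out multiple non-real roots by an algebraic elimination. Suppose $z_0\neq 0$ satisfies both $Q(z_0;\kk)=0$ and $\partial_z Q(z_0;\kk)=0$. Subtracting these two equations eliminates $e^{z_0}(1+z_0^2/\kk^2) = 1+z_0$ and leaves
\[
\frac{2 z_0 e^{z_0}}{\kk^2} + z_0 = 0, \qquad\text{hence}\qquad e^{z_0} = -\frac{\kk^2}{2}.
\]
Substituting this back into $Q(z_0;\kk)=0$ gives the quadratic $z_0^2 + 2 z_0 + 2 + \kk^2 = 0$, whose only solutions are $z_0 = -1 \pm i\sqrt{1+\kk^2}$. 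Consistency with $e^{z_0}=-\kk^2/2$ forces simultaneously $\re z_0 = \ln(\kk^2/2)=-1$ (i.e., $\kk^2 = 2/e$) and $\im z_0 = (2k+1)\pi$ for some integer $k$ (i.e., $\sqrt{1+\kk^2} = (2k+1)\pi$). With $\kk^2 = 2/e$, however, $\sqrt{1+\kk^2}\approx 1.32$, which is not an odd multiple of $\pi$, a contradiction.

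The only step with any subtlety is the third one; the risk is that one might miss an algebraic branch when eliminating $e^{z_0}$. The obstacle is avoided here because the subtraction $\partial_z Q - Q$ is linear in $e^{z_0}$, so elimination is unambiguous, and the resulting transcendental compatibility condition (matching $\re z_0$ and $\im z_0$ against the explicit value of $\ln(-\kk^2/2)$) gives a clean numerical contradiction for every $\kk>0$.
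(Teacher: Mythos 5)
Your proof is correct and follows essentially the same route as the paper: use $Q=\partial_z Q=0$ to derive $e^{z_0}=-\kk^2/2$, substitute back to obtain $z_0^2+2z_0+2+\kk^2=0$, and show the two constraints on $z_0$ are incompatible for every $\kk>0$. (A minor observation: your explicit solution $z_0=-1\pm i\sqrt{1+\kk^2}$ gives $\re z_0=-1$, which is correct; the paper's intermediate line reads $\im(\cdot)=\pi k(2r+1)$ and concludes $r=-\tfrac12$, where it should be $\pi k(2r+2)$ and $r=-1$, but the final contradiction is unaffected.)
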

\begin{proof}
Clearly $Q(0;\kk)=0$, and for real $z\ne0$ we have
$Q > e^z - 1-z >0$ by the convexity of $e^z$.
In general we compute 
\[
\D_zQ = Q + z + e^z(2z/\kk^2).
\]
The root $z=0$ is double because $0=Q=\D_zQ<\D^2_zQ$ at 0.
At a complex double root, on the other hand, necessarily
$e^z = -\kk^2/2$. This implies $z=r+i\pi k$ where
$e^r=\kk^2/2$ and $k$ is an odd integer.
Then, however, it follows
\[
0 = -2Q = z^2+\kk^2 + 2 + 2z = (r^2-\pi^2k^2 + 2e^r +2 + 2r) + i\pi k(2r+1),
\]
so $r=-\frac12$ and we infer
\[
\pi^2<\pi^2k^2 = \frac14+2e^{-1/2}+1 < 4,
\]
a contradiction.  Hence the nonzero roots of $Q$ are all non-real and simple.
\end{proof}

For the next result, let $t_0=0$ and recall that $t_1<t_2<\ldots$
denotes the sequence of positive roots of \eqref{e:ttant}.
\begin{lemma} \label{l:Qzeros}
The function $Q$ has exactly $k$ complex-conjugate pairs
of roots $z$ in the right half plane  $\re z>0$ 
if $\kk^2=\sqrt{1+t^2}-1$ with $t\in(t_k,t_{k+1}]$.
\end{lemma}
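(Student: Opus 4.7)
The plan is to track, as $\kk$ varies, the number $N(\kk)$ of zeros of $Q(\cdot;\kk)$ in the open right half plane (counted with multiplicity). By the previous lemma every nonzero zero is simple, so as $\kk$ varies these zeros move analytically, and $N(\kk)$ is a piecewise constant integer-valued function that can jump only when some zero crosses the imaginary axis. The preceding classification of purely imaginary zeros---namely $z=0$ (double, for every $\kk$) and $z=\pm i t_j$ (simple, only at the discrete parameter values $\kk=\kk_j$ with $\kk_j^2=\sqrt{1+t_j^2}-1$)---reduces the lemma to two claims, with $\kk_0:=0$: first, $N(\kk)=0$ on $(0,\kk_1)$; second, $N(\kk)$ jumps by $+2$ at each $\kk_j$.

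For the base case I would first establish a boundedness estimate that also rules out zeros escaping to or arriving from infinity. Using $|e^z|\ge 1$ when $\re z\ge 0$, the crude bound $|Q(z;\kk)|\ge |z|^2/(2\kk^2)-1-|z|$ holds for $|z|\ge\kk\sqrt{2}$, so every zero in the closed right half plane satisfies $|z|\le R(\kk):=\kk^2+\sqrt{\kk^4+2\kk^2}$, which is continuous in $\kk$ and tends to zero as $\kk\to 0^+$. Combined with the local expansion $Q(z;\kk)=z^2(\tfrac{1}{\kk^2}+\tfrac{1}{2})+O(z^3)$ showing that the double zero at the origin is isolated, this forces $N(\kk)=0$ for all sufficiently small $\kk>0$, hence on the whole interval $(0,\kk_1)$. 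The same estimate, continuous in $\kk$, prevents any gain or loss of right-half-plane zeros at infinity as $\kk$ varies over compact subintervals of $(0,\infty)$.

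For the crossings, implicit differentiation of $Q(z(\kk);\kk)=0$ at $z=it_j$ gives $z'(\kk)=-\D_\kk Q/\D_z Q$. Substituting the imaginary-root identity $e^{it_j}=-(1+it_j)/w$ with $w:=\sqrt{1+t_j^2}$ and using $\kk_j^2=w-1$ together with $w^2-w=w\kk_j^2$, the partial derivatives simplify to clean expressions, and a short rationalization shows that $\re z'(\kk_j)$ is proportional to $t_j^2 w(w-1)$, which is strictly positive since $t_j>0$ and $w>1$. Hence the conjugate pair $\pm i t_j$ moves transversally from $\{\re z<0\}$ into $\{\re z>0\}$ as $\kk$ increases through $\kk_j$, contributing $+2$ to $N(\kk)$. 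This gives $N(\kk)=2k$ on $(\kk_k,\kk_{k+1}]$, i.e., exactly $k$ complex-conjugate pairs in the right half plane, as claimed.

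The main obstacle I expect is the transversality computation: the manipulations are elementary but one must carry the imaginary-root identity and the substitutions $\kk^2=w-1$, $w^2-w=w\kk^2$ through the implicit differentiation carefully enough to land on a factorization whose sign is manifest. Everything else follows from continuity of simple roots in the parameter and the elementary boundedness estimate.
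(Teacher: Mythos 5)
Your proposal is correct and follows essentially the same strategy as the paper: establish a base case of no right-half-plane zeros for small $\kk$, bound the zeros to preclude escape to or arrival from infinity, and show each conjugate pair $\pm it_j$ crosses the imaginary axis transversely (with $\re z'(\kk_j)>0$) as $\kk$ passes $\kk_j$. The paper organizes the transversality computation through the substitution $w=\kk^2$ and the identity $\tfrac{z}{w}\bigl(\tfrac{dz}{dw}\bigr)^{-1}=2+\tfrac{w+z^2}{1+z}$, which is a little slicker than your direct $z'(\kk)=-\D_\kk Q/\D_z Q$, but your claimed factorization of $\re z'(\kk_j)$ proportional to $t_j^2 w(w-1)$ is verified by the same algebra; one small caveat is that in your base case the $O(z^3)$ coefficient in the local expansion is itself of size $\kk^{-2}$, so the isolation radius must be checked to dominate $R(\kk)$ as $\kk\to0$ (e.g.\ by noting $Q/z^2=\tfrac{e^z-1-z}{z^2}+\tfrac{e^z}{\kk^2}$ with $|e^z|\ge1$ on $\re z\ge0$), but this is a routine fix.
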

\begin{proof}
First, we claim that the imaginary roots of $Q$ always 
cross into the right half plane $\re z>0$ as $\kk$ increases. 
To see this, regard $w:=\kk^2$ as a complex
variable and note that $Q=0$ if and only if 
\begin{equation}\label{e:wfromz}
w = \frac{z^2}{(1+z)e^{-z}-1}\,.
\end{equation}
Because $\frac{d}{dz}(1+z)e^{-z} =-ze^{-z}$, we compute 
\begin{equation}\label{ee:dzdw0}
\frac{z}{w} \left(\frac{dz}{dw}\right)\inv = 2+ we^{-z} = 2 + \frac{w+z^2}{1+z}\,,
\end{equation}
by using the identity $Q=0$ to eliminate $e^{-z}$.
{%\blue %BLUE
Multiplying by $\bar z|1+z|^2$, we find 
\begin{equation}\label{ee:dzdw}
\frac{|z|^2|1+z|^2}w 
\left(\frac{dz}{dw}\right)\inv = 
2\bar z|1+z|^2 + w(\bar z+\bar z^2) + z|z|^2 + |z|^4
\end{equation}
For $z=x+iy$ in the first quadrant,
the imaginary part of this expression is negative, which implies
\begin{equation}\label{e:imdzdk}
\im \frac{dz}{d\kk} >0\,.
\end{equation}
Furthermore, provided $y^2>\kk^2$ 
(which must be the case if $x=0$ by \eqref{e:alphat}),
the real part of \eqref{ee:dzdw} is larger than
$y^4-wy^2>0$, hence
\begin{equation}\label{e:redzdk}
\re \frac{dz}{d\kk} >0\,.
\end{equation}
} %BLUE
It follows from these computations that the roots $z=\pm it_k$ of $Q$ 
on the imaginary axis always pass 
into the right half plane as $\kk$ increases, with derivative $dz/d\kk$
remaining in the first quadrant. They can never escape
to infinity, because any roots of $Q$ in the right half plane must lie in
the bounded region where 
\[
1> | e^{-z} | = \left| \frac{\kk^2+z^2}{\kk^2(1+z)} \right|.
\]

To finish the proof, we show that if $\kk>0$ is small enough, 
then $Q$ has no roots with $\re z>0$. If $\kk\in(0,1)$,
any such root must satisfy
\[
\left|\frac{z^2}{\kk^2}\right| -1 \leq 
\left|\frac{z^2}{\kk^2}+1\right|  
= |(1+z)e^{-z}|  < 1+ |z|
< 1+\left|\frac z\kk\right|,
\]
and this implies $|z|<2\kk$.  Now it follows
\begin{align*}
\kk^2 Q &= e^z z^2 + \kk^2(e^z-1-z)  \\
&= z^2 + O(z^3) + \kk^2\left( \frac12 z^2 + O(z^3) \right)
\\& = z^2\left(1+O(\kk) \right).
\end{align*}
Therefore, for small enough $\kk>0$, $Q$ does not vanish when $\re z>0$.
\end{proof}

{\em Labeling the roots.}
Due to the results of the previous two lemmas, we may label all the non-real roots
of $Q$ that cross the imaginary axis and lie in the upper half plane $\im z>0$
by analytic functions $z = z_j^0(\kk)$, $j=1,2,\ldots$, defined for all $\kk>0$
according to the property that 
\begin{equation}\label{d:zk}
z_j^0(\kk)=it_j \quad\mbox{ when 
\ \ } \kk=\kk^0_j:= \left(\sqrt{1+t_j}-1\right)^{1/2}.
\end{equation}
Thus we can summarize as follows. 
\begin{lemma}\label{lem:Q0zj}
There are analytic curves $z_j^0\colon(0,\infty)\to\CC$, $j=1,2,\ldots$, 
satisfying \eqref{d:zk} and $\im z_j^0(\kk)>0$ for all $\kk>0$, such that 
when $\kk\in(\kk^0_k,\kk^0_{k+1}]$, 
the numbers $z_1^0(\kk)\ldots,z_k^0(\kk)$
comprise all the roots of $Q$ in the first quadrant.
Moreover, $dz_j^0/d\kk\ne0$ for all $\kk>0$, and 
\begin{equation} \label{e:dzk}
\re \frac{dz_j^0}{d\kk} >0 \quad\mbox{and}\quad
\im \frac{dz_j^0}{d\kk} >0 
\quad\mbox{for all $\kk\ge\kk_j^0$.}
\end{equation}
\end{lemma}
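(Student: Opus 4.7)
The plan is to construct each curve $z_j^0$ by the implicit function theorem at the crossing point $(\kk_j^0, it_j)$, extend it analytically to all of $(0,\infty)$ using the preceding lemma and Hurwitz's theorem, establish the monotonicity~\eqref{e:dzk} from the derivative identity~\eqref{ee:dzdw}, and match the count against Lemma~\ref{l:Qzeros}.

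\emph{Local existence and extension.} By the preceding lemma, $z=it_j$ is a simple zero of $Q(\cdot;\kk_j^0)$, so $\D_zQ(it_j;\kk_j^0)\ne0$, and the implicit function theorem gives an analytic curve $z_j^0$ on a neighborhood of $\kk_j^0$ with $z_j^0(\kk_j^0)=it_j$ and $Q(z_j^0(\kk);\kk)=0$. Let $I_j\subset(0,\infty)$ be the maximal open interval of analytic extension on which $z_j^0$ stays in the open upper half plane; we claim $I_j=(0,\infty)$. The curve cannot reach the real axis at a nonzero point, since $Q(x;\kk)\ge e^x-(1+x)>0$ for real $x\ne0$. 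Nor can it approach $0$: because $\D_z^2Q(0;\kk)=1+2/\kk^2\ne0$, the origin is a double (never triple) zero of $Q$ for every $\kk>0$, so Hurwitz's theorem prevents any other zero from converging to $0$ over a compact $\kk$-interval. As long as $z_j^0(\kk)$ stays nonzero and non-real it is simple by the preceding lemma, and the IFT continues the extension. Escape to infinity at a finite endpoint $\kk^*>0$ is precluded by examining the auxiliary function $g(z):=z^2/((1+z)e^{-z}-1)$, which satisfies $Q(z;\kk)=0\Leftrightarrow g(z)=\kk^2$ for $z\ne0$: elementary asymptotics show $|g(z)|\to0$ as $\re z\to-\infty$ and $|g(z)|\to\infty$ in every other direction of $|z|\to\infty$, so for any compact $[\alpha,\beta]\subset(0,\infty)$ the preimage $g^{-1}([\alpha^2,\beta^2])$ is compact and contains $z_j^0([\alpha,\beta])$.

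\emph{Monotonicity and non-recrossing.} At $\kk=\kk_j^0$, $y:=\im z_j^0=t_j$ satisfies $y^2=t_j^2>\sqrt{1+t_j^2}-1=\kk^2$, so formulas~\eqref{e:imdzdk}--\eqref{e:redzdk} give $\im(dz_j^0/d\kk)>0$ and $\re(dz_j^0/d\kk)>0$; the curve enters the open first quadrant as $\kk$ increases past $\kk_j^0$ and the open second quadrant as $\kk$ decreases. For $\kk>\kk_j^0$, the inequality $\im(dz_j^0/d\kk)>0$ holds unconditionally throughout the first quadrant by~\eqref{e:imdzdk}. To deduce $\re(dz_j^0/d\kk)>0$ from~\eqref{e:redzdk} we propagate $y^2>\kk^2$: a direct computation from~\eqref{ee:dzdw} evaluated on the hypothetical locus $y=\kk$ shows that $d(y^2-\kk^2)/d\kk>0$ there, so this quantity cannot cross zero from above as $\kk$ increases. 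Consequently the curve stays in the first quadrant for all $\kk\ge\kk_j^0$ and cannot re-cross the imaginary axis, which establishes~\eqref{e:dzk}.

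\emph{Counting.} For $\kk\in(\kk_k^0,\kk_{k+1}^0]$, each of $z_1^0(\kk),\ldots,z_k^0(\kk)$ has crossed the imaginary axis at $\kk_j^0<\kk$ and, by the step above, remains in the first quadrant; they are pairwise distinct because any coincidence would produce a non-simple non-real root of $Q$, contradicting the preceding lemma. Since Lemma~\ref{l:Qzeros} gives exactly $k$ first-quadrant roots for such $\kk$, the list $z_1^0(\kk),\ldots,z_k^0(\kk)$ is complete. The principal technical obstacle lies in the monotonicity step, specifically the propagation of $y^2>\kk^2$ needed so that~\eqref{e:redzdk} can be invoked globally for $\kk\ge\kk_j^0$; the escape-to-infinity analysis via the auxiliary function $g$ is a further point requiring care.
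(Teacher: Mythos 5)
The proposal takes a genuinely different route for the global extension step, which is the only part the paper proves explicitly: the paper treats \eqref{ee:dzdw0} as an ODE in the real parameter $w=\kk^2$, invokes ODE continuation theory, and rules out blow-up of $dz/dw$ at a finite $w_->0$ by an algebraic argument showing that $2(1+z)+w+z^2=0$ is incompatible with $Q(z;\kk)=0$ (forcing $z=-1+i\sqrt{1+w}$ and a contradiction). Your route instead combines local IFT, Hurwitz's theorem, and a level-set boundedness argument for $g(z)=z^2/((1+z)e^{-z}-1)$. Both approaches have the same aim, and yours also correctly addresses the possibility of the curve hitting the origin, which the paper glosses over.

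However, two of your key claims do not hold as stated. First, the assertion that $g^{-1}([\alpha^2,\beta^2])$ is compact is false: along curves with $\re z\to-\infty$ and $|\im z|\sim c\,e^{-\re z}$ one has $g(z)\approx ze^z$ with $|g(z)|$ bounded away from $0$ and $\infty$, so the level sets escape to infinity in the far-left half plane; indeed the curves $z_j^0(\kk)$ themselves run off to infinity in that direction as $\kk\to0^+$. The paper's argument sidesteps this because it only needs to preclude blow-up at a finite $w_->0$, and along any such sequence the identity $g(z)=w$ together with boundedness of $dz/dw$ controls $z$. Second, and more seriously, the claim that ``a direct computation from \eqref{ee:dzdw} evaluated on the locus $y=\kk$ shows $d(y^2-\kk^2)/d\kk>0$ there'' is false, and the inequality $y^2>\kk^2$ does not in fact persist along the curve: as $\kk\to\infty$ one has $z_j^0(\kk)\to z_j^*$, a fixed root of $e^z=1+z$, so $y=\im z_j^0$ stays bounded while $\kk^2\to\infty$, forcing $y^2<\kk^2$ eventually. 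Thus the barrier argument you propose for propagating \eqref{e:redzdk} cannot work. (In fact, following this through shows that $\re\bigl(dz_j^0/d\kk\bigr)$ eventually becomes negative as $\kk\to\infty$, so \eqref{e:dzk} as stated only holds on a bounded interval $[\kk_j^0,\bar\kk_j]$; fortunately the main theorem only uses $\kk$ in the compact range $[\beta_0,\beta_k]$.) You flagged both points as the ``principal technical obstacle,'' which is accurate: these two steps are precisely where the proposal, in its current form, fails, while the paper's ODE/algebraic argument handles the extension cleanly and defers the monotonicity to the transversality computations already done in Lemma~\ref{l:Qzeros}.
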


\begin{proof}
To show the curves $z_j^0$ are well defined and nondegenerate for all $\kappa>0$,
we note that according to standard continuation theory
for the ODE \eqref{ee:dzdw0}, a solution exists for real $w$ 
in a maximal interval $(w_-,\infty)\subset(0,\infty)$ for which 
$dz/dw$ remains bounded.  
It is not possible that $w_->0$, however, because the right-hand side
of \eqref{ee:dzdw0} cannot approach zero at the same time as 
\eqref{e:wfromz} holds with $w\to w_-$, for the following reason:
If \eqref{ee:dzdw0} vanishes, then
$0=2(1+z)+w+z^2$, hence $z=-1+i\tau$ with $\tau=\sqrt{1+w}>1$. 
But then \eqref{e:wfromz} implies
\[
0=w(1+z)e^{-z}-w-z^2 = i\tau((\tau^2-1)e^{1-i\tau}+2).
\]
This implies $\tau^2 = 1 - 2e^{i\tau-1}$, so necessarily $\sin\tau=0$
but also $1<\tau^2<1+2/e$, and this is impossible.
\end{proof}

\section{Analysis of roots of \texorpdfstring{$F_0$}{F0}}

In this section we locate all the roots of the polynomial 
$F_0=\vp^M P_2 -P_1$ in \eqref{d:F0} of degree $M+4$, 
to a rough approximation, provide bounds on roots that 
may correspond to unstable eigenvalues, and establish the 
convergence in \eqref{d:Phiza} in a precise sense.
Let $B(z,r)\subset\CC$ denote the closed disk 
with center $z\in\CC$ and radius $r>0$.
We fix a constant $\am>2$. (Actually, $\am=3$ suffices.)
Depending on some large $\beta>1$ 
(to be chosen in the proof of Theorem~\ref{thm:eigsB}), 
we presume throughout that 
\begin{equation}\label{h:KM}
\beta\inv\le K\sqrt{M}\le \beta.
\end{equation}

\subsection{Rough locations of all roots}
Locations of the $M+4$ roots of $F_0$ will be identified as follows.
We recall that the four values $\vp=1$, $A\inv$, $\pm A^{-1/2}$,
which comprise the roots of the polynomial 
\[
S(\vp)=(\vp-1)(A\vp-1)(A\vp^2-1)
\]
from \eqref{d:Svp},
are already known to be roots of the function $F$ that $F_0$ approximates.
Note that the three roots of $S(\vp)$ with $\vp\ne1$ satisfy
\begin{equation}\label{r:S0appr}
A\inv = 1-K+o(K), \quad
\pm A^{-1/2} = \pm\left(1-\frac K2\right) +o(K)\,.
\end{equation}

%\vspace{-0.4cm}
\begin{proposition}\label{p:F0roots}
Fix $\am>2$.  Then for any $\beta>1$ there exists $\ap_0>0$ and $M_0>0$
such that whenever $M>M_0$ and \eqref{h:KM} holds,
the polynomial $F_0$ has exactly:
\begin{itemize}
\item[(i)] one double root at $\vp=1$.
\item[(ii)] one simple root in each of the following disks 
of radius $r_K=K/8$:
\[
\mbox{$B(A\inv,r_K)$,  \quad
$B(A^{-1/2},r_K)$  , \quad
$B(-A^{-1/2},r_K)$  . \quad
}
\]
\item[(iii)] one simple root in $B(-M,1)$.
\item[(iv)] $M-2$ roots in the punctured annulus 
\[
D_a 
:= \left\{\vp: \vp\ne1 \mbox{ and } M^{-\am/M}<|\vp|<
(1-\frac {\ap_0} M)\inv \right\}.
\]
\end{itemize}
\end{proposition}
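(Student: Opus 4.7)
The plan is to combine direct algebraic manipulation with several applications of Rouché's theorem on carefully chosen contours, using that $F_0$ is a polynomial of degree exactly $M+4$ to conclude that the zeros identified this way account for all of them. For (i) I verify by direct computation that $F_0(1)=F_0'(1)=0$. The factorizations of $H_1$ and $G_2$ yield $P_1(1)=P_2(1)=K^3$, hence $F_0(1)=0$. The derivative computation involves a cancellation: computing $G_1'(1)$, $G_2'(1)$, and in particular $H_1'(1)=-K^2M$ (which supplies the $M$-dependent factor), one finds $-P_1'(1)=-3AK^2-K^3-K^3M$ and $P_2'(1)=3AK^2+K^3$; together with the contribution $MP_2(1)=MK^3$ from differentiating $\vp^M$, everything cancels, giving $F_0'(1)=0$. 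Multiplicity exactly $2$ follows from the formal expansion $K^{-3}F_0(1+z/M)\to Q(z;\kk)$, whose leading behavior near $z=0$ is $z^2(\tfrac12+\tfrac1{\kk^2})$, nondegenerate.

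For (ii), the crucial algebraic step is the factorization $P_1(\vp)=-(A\vp^2-1)Q_1(\vp)$ where $Q_1(\vp)=K(2\vp-1-A\vp^2)+(KM+1)(\vp-1)(1-A\vp)$. Via the quadratic formula in the scaling $K=\kk/\sqrt M$, the two roots of $Q_1$ lie within $O(1/M)$ of $1$ and $A\inv$ respectively, so $P_1$ has simple zeros at $\pm A^{-1/2}$ and at a point within $K/8$ of $A\inv$ for $M$ large. On each of the three disks in the statement I apply Rouché to $F_0=-P_1+\vp^M P_2$ with $-P_1$ as the dominant function. Each disk lies inside $\{|\vp|\le 1-cK\}$ for some $c>0$, so $|\vp|^M\le e^{-cKM}$ is exponentially small (as $KM\ge\kk\sqrt M$), while $|P_1|$ on the boundary is bounded below by a quantity of order $K^2/\sqrt M$ via a Taylor estimate at the enclosed simple root. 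This yields exactly one simple zero of $F_0$ per disk.

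For (iii), an asymptotic analysis of the degree-$4$ polynomial $P_2$ shows it has a simple zero at $\vp\approx-M-2K/A\in B(-M,1)$, while its other three zeros cluster near $\vp=1$ and thus lie inside $D_a$. On $\partial B(-M,1)$, $|\vp|^M\sim M^M$ is enormous, so $|\vp^M P_2|$ dominates $|P_1|=O(M^{5})$ at every point of the boundary — even near the enclosed root of $P_2$, using simplicity and $|P_2'|\sim M^3$ there — and Rouché with dominant term $\vp^M P_2$ gives exactly one simple zero of $F_0$ inside. For (iv), the degree count identifies $2+3+1=6$ zeros, leaving $M-2$. These are confined to $D_a$ by ruling out zeros in the two remaining regions: the inner region $\{|\vp|\le M^{-\am/M}\}$ minus the three disks of (ii), where $|\vp|^M\le M^{-\am}$ is small and $|P_1|$ is bounded below uniformly away from its zeros; and the outer region $\{|\vp|\ge(1-\alpha_0/M)\inv\}$ minus $B(-M,1)$, where $|\vp|^M\ge e^{\alpha_0}$ and the remaining roots of $P_2$ all lie inside $D_a$, so $|\vp^M P_2|$ dominates $|P_1|$.

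The main obstacle I anticipate is Part (iv), specifically locating the three small zeros of $P_2$ near $\vp=1$ precisely enough to confirm that they lie strictly inside the outer boundary of $D_a$, and obtaining a uniform lower bound $|\vp^M P_2|>|P_1|$ on the circle $|\vp|=(1-\alpha_0/M)\inv$ away from a small arc near $\vp=1$; in that region $\vp^M$ is merely of order $1$ and neither term of $F_0$ is overwhelmingly larger than the other, so the comparison must be made quantitatively rather than by the easy exponential estimates that suffice elsewhere.
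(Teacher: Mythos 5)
Your approach is a genuinely different decomposition from the paper's, and the point where you anticipate trouble is in fact exactly the gap. You take Rouch\'e with $-P_1$ as the dominant function in the inner region, and with $\vp^M P_2$ as the dominant function in the outer region, which forces you to locate the zeros of $P_1$ and $P_2$ with enough precision. Your factorization $P_1 = -(A\vp^2-1)Q_1(\vp)$ is correct and handles the three disks in (ii) cleanly, and your computation $F_0'(1)=0$ matches. But the outer Rouch\'e is where the plan breaks: on the circle $|\vp|=(1-\alpha_0/M)^{-1}$, near the point $\vp\approx 1+\alpha_0/M$, one has $|\vp|^M\sim e^{\alpha_0}=O(1)$, $|P_1|\sim K^3$, and $|P_2|\sim K^3$ (both are $K^3$ at $\vp=1$), so neither term of $F_0=-P_1+\vp^MP_2$ is a priori dominant. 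You would need to establish quantitatively that $|P_2|\gtrsim KM|\vp-1|^2\gtrsim K\alpha_0^2/M \gg K^3$ once $\alpha_0$ is large enough, that the three small zeros of $P_2$ sit at distance only $O(1/M)$ from $\vp=1$ and so lie strictly inside $D_a$, and simultaneously that $|P_1|\lesssim |P_2|$ there --- none of which you carry out.

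The paper sidesteps this entirely by choosing a different dominant function with explicitly known zeros. Rather than compare $P_1$ against $\vp^M P_2$, it writes $-F_0 = (KM+1)S(\vp) + KS_1 - \vp^M P_2$ in the inner region (with $S(\vp)=(\vp-1)(A\vp-1)(A\vp^2-1)$ and $S_1=(A\vp-1)^2\vp^2-(\vp-1)^2$), and takes $(KM+1)S$ as dominant: its zeros are exactly the four points $1$, $A^{-1}$, $\pm A^{-1/2}$, so there is nothing to locate. In the outer region it writes $\vp^{-M}F_0 = S_4 + S_3 - \vp^{-M}P_1$ with $S_4 = (M+\vp)(A\vp-1)A^2(\vp-1)^2$, whose zeros $-M$, $A^{-1}$, and the double zero at $\vp=1$ are again explicit, so $|S_4|$ has a transparent lower bound $\sim K\alpha_0^2/M$ near the outer boundary and the ratios $|S_3/S_4|$, $|\vp^{-M}P_1/S_4|$ are shown to be $O(\beta^2/\alpha_0^2)$ there. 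This is precisely what resolves the ``gray zone'' you flag: the explicit double zero of $S_4$ at $\vp=1$ makes the Rouch\'e comparison tractable where $|\vp|^M$ offers no exponential help. You should either adopt this splitting or supply the missing quantitative estimates on $P_1$, $P_2$ near $\vp\approx 1$.

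A smaller issue: for part (i), you conclude $F_0''(1)\ne0$ by appealing to the convergence $K^{-3}F_0(1+z/M)\to Q(z;\kk)$ and the nondegeneracy of $Q$'s double zero at $z=0$. That can be made rigorous via Hurwitz's theorem once the uniform convergence is established, but that convergence is a later result in the development; the paper's direct computation $F_0''(1)=M(M+1)K^3+2A^2(KM-1)$, which is manifestly nonzero under \eqref{h:KM} for $M$ large, is more self-contained and avoids any appearance of circular ordering.
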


\begin{proof}
Recall $F_0(\vp)=\vp^MP_2-P_1$. where we can write
\begin{align} \label{e:P1-1}
P_1 &= (A\vp-1)^2\vp(K\vp + (KM+1)(\vp-1) )
\nonumber\\ & \qquad -(\vp-1)^2(K - (KM+1)(A\vp-1) )
\nonumber\\ &= K[ (A\vp-1)^2\vp^2 - (\vp-1)^2 ] + (KM+1)S(\vp)\,,
\end{align}
with $S(\vp)$ % =(\vp-1)(A\vp-1)(A\vp^2-1)$ 
as in \eqref{d:Svp}, and 
\begin{align} \label{e:P2-1}
P_2 &= (A\vp-1)^2 [ K\vp + A(\vp-1)(M+\vp)]
\nonumber\\ & \qquad - 
A(\vp-1)[KM(A\vp-1)+A(\vp-1)] 
\nonumber\\ &=  \vp(A\vp-1)^3 -A^2(\vp-1)^2 + M(A\vp-1) A^2(\vp-1)^2
\,.
\end{align}

{\bf Step 1.}  First we establish (i). Note that $F_0(1)=0$, since
\[
P_1(1)=P_2(1)=K^3.
\]
Furthermore, $F_0'(1)=MK^3+P_2'(1)-P_1'(1)=0$ since
\begin{align*}
P_1'(1) &= (KM+1)K^2 + 2K^2(1+2K) = MK^3 + K^2(3+4K)\,,
\\
P_2'(1) &= 2AK^2 + K^2(K+A(M+1)) - AMK^2 = K^2(3+4K)\,.
\end{align*}
Hence $\vp=1$ is at least a double root. But one also checks
\begin{equation}\label{e:F0pp}
F_0''(1) = M(M+1)K^3 + 2A^2(KM-1)
\end{equation}
(e.g., by computer algebra) so $F_0''(1)\ne0$  when $KM\ge1$.

{\bf Step 2.}  Next we claim that the only roots of $F_0$ in the disk
$B(0,M^{-\am/M})$ are three as described in (ii).  
We can write 
\begin{equation}
-F_0(\vp)=P_1-\vp^M P_2 = (KM+1)S(\vp)+KS_1-\vp^M P_2\,,
\end{equation}
where
\begin{equation}\label{e:P1frac}
S_1 =  (A\vp-1)^2\vp^2 - (\vp-1)^2 \,. 
\end{equation}
It suffices to show that 
for all $\vp$ in $B(0,M^{-\am/M})$ outside the balls listed in (ii),
\begin{equation}\label{b:step2}
\Delta_0:=KM|S(\vp)|- K|S_1| - |\vp^M P_2| >0\,,
\end{equation}
for $M$ large enough. 
For then our claim follows from Rouch\'e's theorem,
since each of the balls in (ii) contains one simple root of $S$.

Observe that $|P_2|\le CM$ for $|\vp|\le1$, therefore
\begin{equation}
|\vp^M P_2|  \le C M^{1-\am}.
\end{equation}
(Here and below $C$ denotes a generic constant which may 
depend on $\beta$ and $\am$ but is independent of $M$ and $K$, 
whose value may change from instance to instance.) 
To complete the proof of \eqref{b:step2}, we consider three sub-cases:
\[
\mbox{(a) $\re\vp<0$;\quad (b) $\re \vp>0$ and $|\vp-1|>2K$;\quad
(c) $|\vp-1|\le 2K$.}
\]

In case (a), for each $\hat\vp\in\{1,A\inv,A^{-1/2}\}$
(i.e., for each positive root of $S$), 
we have $1-K<|\vp-\hat\vp|< 2$, therefore
\begin{equation}
|S(\vp)|\ge A^2(1-K)^3 |\vp+A^{-1/2}| \quad\mbox{and}\quad
|S_1| \le 4(A^2+1)<8A^2  \,.
\end{equation}
Because $\am>2$ and $K^2M\ge \beta^{-2}$ 
it follows that for $|\vp+A^{-1/2}|\ge r_K=K/8$,
with $M$ large enough we have
\begin{align} 
\Delta_0 
&\ge 
\frac{K^2M A^2}{16} - \frac{8A^2}{\beta\sqrt{M}}- \frac C{M^{\am-1}}
\ge \frac{\beta^{-2}}{20}>0.
\label{e:F0casea}
\end{align}
(We could replace $r_K$ by say $20/M$ here, but we have no need.)

In case (b), each positive root of $S$ satisfies $|1-\hat\vp|\le K$,  
hence 
\[
\frac12|\vp-1|\le |\vp-1|-K\le |\vp-\hat\vp|\le |\vp-1|+K <2|\vp-1| \,.
\]
Consequently
\[
|S_1| \le A^2 |\vp-A\inv|^2+|\vp-1|^2 \le 5A^2|\vp-1|^2
\]
and (for $K<\frac12$)
\[
|S(\vp)|\ge \frac14 A^2(1-K) |\vp-1|^3 \ge \frac14 A^2 K |\vp-1|^2\,.
\]
Therefore as in \eqref{e:F0casea} we get
\begin{align}
\Delta_0 &\ge
\left(\frac{K^2MA^2}8 - \frac{5A^2}{\beta\sqrt{M}}\right)|\vp-1|^2 
- \frac C{M^{\am-1}}
\ge \frac{\beta^{-4}}{20M} >0
\label{e:F0caseb}
\end{align}
for $M$ large enough depending on $\beta$ and $\am$.

In case (c), we have 
\begin{align*}
|S(\vp)|&\ge A^2(1-3K)\min_{\hat\vp} |\vp-\hat\vp|^3\,,
\\
|S_1| &\le 2A^2\max_{\hat\vp}|\vp-\hat\vp|^2 \le 10A^2K^2 \,,
\end{align*}
with min and max taken over positive roots of $S$. Therefore
for $M$ large, when $|\vp-\hat\vp|\ge r_K = K/8$ (chosen to separate the roots)
we find
\begin{align}
|\Delta_0| &\ge 
\frac{\sqrt{M}A^2}\beta  \left(
\frac{r_K^3}2 - \frac{10K^2}{M}
\right) - \frac C{M^{\am-1}} \ge 
\frac cM \,,
\label{e:F0casec}
\end{align}
for some $c>0$ depending on $\beta$. 

This finishes the proof of \eqref{b:step2}.  The conclusion in (ii) 
now follows, and also the fact that 
$F_0$ has no other roots in $B(0,M^{-\am/M})$.

{\bf Step 3.} Next we show that $F_0$ has no roots
satisfying 
\begin{equation}\label{b:outer1}
|\vp|\inv < 1- \frac {\ap_0} M \quad\mbox{and}\quad |M+\vp|\ge1\,,
\end{equation}
for large enough ${\ap_0} $ depending on $\beta$, 
and deduce (iii) and (iv).
The estimates in \eqref{b:outer1} imply
\begin{equation} \label{b:outer2}
|1-\vp\inv| \ge 1-|\vp|\inv> \frac {\ap_0} M \quad\mbox{and}\quad
|M+\vp||\vp| \ge \frac{M}2 \vee|\vp|\,.% \quad{\red (\vee |\vp|) }\,. 
\end{equation}
Observe 
\[
\vp^{-M}F_0(\vp) = P_2 - \vp^{-M}P_1 = S_4+S_3 -\vp^{-M}P_1\,, \] 
where
\begin{align}\label{d:Q2}
S_4 &:= (M+\vp)(A\vp-1)A^2(\vp-1)^2 \,,
\\[5pt]
S_3 &:= 
 \vp(A\vp-1)^3 -A^2(\vp-1)^2 - \vp(A\vp-1) A^2(\vp-1)^2
 \nonumber\\
 &= \vp (A\vp-1)(K^2+2KA(\vp-1))-A^2(\vp-1)^2 \,.
\end{align}
(To get this last, expand 
$(A\vp-1)^2=(K+A(\vp-1))^2$
and cancel a term.)

We now show the ratios $S_3/S_4$ and $\vp^{-M}P_1/S_4$ are uniformly small
for $\vp$ satisfying \eqref{b:outer1}, by estimating six terms as follows:

(a) The first term of the ratio $S_3/S_4$ is bounded 
using \eqref{b:outer2} as follows:
\begin{align}
\left| \frac{\vp(A\vp-1)K^2}{S_4} \right| =
\frac{ K^2}{|M+\vp||\vp| A^2 |1-\vp\inv|^2} \le 
\frac{2K^2} M \frac{M^2}{\ap_0^2} \le \frac{2\beta^2}{\ap_0^2}
\end{align}

(b) To bound the next term in $S_3/S_4$, observe
\begin{align}
S_*:=\left| \frac{ \vp 2K A(\vp-1) }{(M+\vp)A^2(\vp-1)^2} \right|
\le 
\frac{2K}
{|M+\vp| (1-|\vp|\inv)
}
\end{align}
For $|\vp|> M/2$, since $|M+\vp|\ge1$, for $M>4$ we have %\eqref{b:outer2} yields (for $M>2\ap_0$)
\begin{equation}
S_* \le \frac{2K}{1-2/M} < 4K \le \frac{4\beta}{\sqrt{M}} \,,
\end{equation}
while for $|\vp|\le M/2$ we have $|M+\vp|>M/2$ and
infer from \eqref{b:outer2} that 
\begin{equation}
S_* \le \frac{4K}{M} \frac M{\ap_0}  % \max_{r\in I_M} \frac{r}{1-r\inv} 
\le 4K\le \frac{4\beta}{\sqrt{M}}\,.
\end{equation}

(c) The last term in the ratio $S_3/S_4$ satisfies the bound
\begin{align}
\left| \frac{A^2(\vp-1)^2 }{S_4} \right| =
\frac{1}{ |M+\vp||\vp| (K+1-|\vp|\inv)} \le \frac{2}{M}\frac 1K\le \frac{2\beta}{\sqrt{M}}\,.
\end{align}

(d) The terms in $\vp^{-M}P_1/S_4$ are estimated as follows.
By \eqref{b:outer2}, %we can say 
\[
|M+\vp||\vp-1|\ge \frac {\ap_0} 2 \,. %\frac12\log M>4 \,.
\]
Further, $A\vp^2-1 = A(\vp-A^{-1/2})(\vp+A^{-1/2})$ and 
\begin{equation} \label{b:Amhalf}
A|\vp-A^{-1/2}|\le A|\vp-1|+A(1-A^{-1/2})< A|\vp-1|+K \,.
\end{equation}
Therefore, since $|\vp+A^{-1/2}|<2|\vp|$ and recalling $|M+\vp|\inv \le 2|\vp|/M$,
\begin{align} 
\left|
\frac{S(\vp)}{S_4}
\right|
&\le 
\frac{2A|\vp|(A|\vp-1|+K)}{|M+\vp||\vp-1|A^2}
\le \frac{4|\vp|^2}{M} + \frac{4K|\vp| }{\ap_0}
\,.
\end{align} 
Hence, since $KM+1<2KM$, the last term in $\vp^{-M}P_1/S_4$ is bounded by 
\begin{align} 
\left|
\frac{2KM S(\vp)}{\vp^M S_4}
\right|
\le 
\frac{8K}{|\vp|^{M-2}} + \frac{8K^2M}{|\vp|^{M-1} \ap_0}
\le \frac{8\beta}{\sqrt M} + \frac{8\beta^2}{\ap_0}\,.
\end{align}

(e) For the next term in $\vp^{-M}P_1/S_4$, we have the bound
\begin{align}
\left|
\frac{K(A\vp-1)^2\vp^2}{\vp^M S_4}
\right|
&\le \frac{K(K+A|\vp-1|)|\vp|^{2-M}}{|M+\vp||\vp-1|^2A^2}
\nonumber\\
&\le \frac{2K^2M}{\ap_0^2} + \frac{2K}{\ap_0} \le \frac{2\beta^2}{\ap_0^2} + \frac{2\beta}{\ap_0\sqrt{M}} \,.
\end{align}

(f) Lastly we have the bound
\begin{align}
\left|
\frac{K(\vp-1)^2}{\vp^M S_4}
\right| \le
\frac{K|\vp|^{-M}}{|M+\vp||\vp|(K+1-|\vp|\inv)} \le  \frac2M\,.
\end{align}

Assembling the estimates in (a)-(f), we conclude that 
if $\ap_0\ge \ap_0(\beta)$ and $M\ge M_0(\beta)$, then
\begin{equation}\label{b:3M}
\frac{ |\vp^{-M}F_0(\vp)-S_4|}{|S_4|}  
<\frac12
\end{equation}
for all $\vp$ satisfying \eqref{b:outer1}.
Part (iii) now follows by Rouch\'e's theorem
since $S_4$ has only one simple zero at 
$\vp=-M$ inside $B(-M,1)$.  Part (iv) follows since we have
shown that $F_0$ has exactly 6 roots (counting multiplicity)
in the complement of the punctured annulus $D_a$.
\end{proof}

We record here several estimates that follow from the proof above.
\begin{corollary}\label{c:F0ests}
Under the conditions of Proposition~\ref{p:F0roots}, we have the following
estimates, for some $c>0$ depending on $\beta$:
\begin{itemize}
\item[(i)]  \quad $\displaystyle  
|F_0(\vp)| \ge %\beta^{-2}/20 
\frac{\beta^{-2}}{20} %\,.
$
\quad if \ \ $|\vp+A^{-1/2}|=r_K$.
\item[(ii)]  \quad $\displaystyle  
|F_0(\vp)| \ge \frac{c}M 
$
\quad\  if \ \ ${|\vp-A\inv|=r_K}$ 
or\ \  ${|\vp-A^{-1/2}|=r_K}$.
\item[(iii)]  \quad $\displaystyle  
|F_0(\vp)| \ge \frac{c}M %\,.  
$
\quad\ if \ \ ${|\vp|=M^{-\am/M}}$.
\item[(iv)]
\quad $\displaystyle 
|\vp^{-M}F_0(\vp)| \ge \frac 12M^3 $
\quad if \ \ ${|\vp+M|=1}$.
\item[(v)]  \quad $\displaystyle  
|F_0(\vp)| \ge  \frac{K\ap_0^2}{2M^2} $
\quad\ if \ \ $\displaystyle {|\vp|\inv=1-\frac{\ap_0}M}$.
\end{itemize}
\end{corollary}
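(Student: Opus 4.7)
The plan is to harvest all five estimates directly from the two representations of $F_0$ already established in the proof of Proposition~\ref{p:F0roots}. For bounds (i)--(iii) I would use the triangle-inequality consequence of \eqref{e:P1-1},
\[
|F_0(\vp)|\ge \Delta_0(\vp):=(KM+1)|S(\vp)|-K|S_1(\vp)|-|\vp^M P_2(\vp)|,
\]
exactly the quantity shown positive in Step~2. For (iv)--(v) I would use the factored bound \eqref{b:3M} of Step~3, rearranged as $|\vp^{-M}F_0(\vp)|\ge\tfrac12|S_4(\vp)|$ for all points satisfying~\eqref{b:outer1}.

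For (i), every $\vp$ on $|\vp+A^{-1/2}|=r_K$ satisfies $\re\vp<0$ (since $r_K=K/8<A^{-1/2}$), so Case~(a) of Step~2 applies and \eqref{e:F0casea} delivers $\Delta_0\ge\beta^{-2}/20$. For (ii), both circles of radius $r_K$ lie inside the Case~(c) region $|\vp-1|\le 2K$ for $M$ large, and the order-$K$ mutual spacing among the positive roots $\{1,A^{-1},A^{-1/2}\}$ of $S$ keeps $\vp$ at distance $\ge r_K$ from the other two centers, so \eqref{e:F0casec} gives the $c/M$ bound. For (iii), I would partition the circle $|\vp|=M^{-\am/M}$ according to the three cases of Step~2, and combine the bounds \eqref{e:F0casea}, \eqref{e:F0caseb}, and \eqref{e:F0casec} into a single uniform lower bound of order $1/M$, with $c$ depending only on $\beta$.

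For (iv), applying $|\vp^{-M}F_0|\ge\tfrac12|S_4|$ with $|S_4|=|M+\vp|\,|A\vp-1|\,A^2|\vp-1|^2$ on $|\vp+M|=1$, the bounds $|\vp-1|\ge M-2$, $|A\vp-1|\ge A(M-1)-1\ge M-2$, and $|M+\vp|=1$ give $|S_4|\ge A^2(M-2)^3\ge M^3$ for $M$ large, which yields the claim. For (v), on the circle $|\vp|^{-1}=1-\ap_0/M$, the elementary estimates $|\vp-1|\ge \ap_0/(M-\ap_0)$, $|A\vp-1|\ge A|\vp|-1\ge K$, and $|M+\vp|\ge M/2$ combine to give $|S_4|\ge K\ap_0^2/M$, hence $|F_0|\ge |\vp|^M K\ap_0^2/(2M)\ge K\ap_0^2/(2M)$, which dominates the claimed lower bound $K\ap_0^2/(2M^2)$.

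The most delicate piece is (iii), for points on $|\vp|=M^{-\am/M}$ that lie near the double zero $\vp=1$, where the separation-by-$r_K$ hypothesis of Case~(c) is borderline: the geometric separation $|\vp-1|\ge 1-M^{-\am/M}$ is of order $\log M/M$, while $r_K$ is of order $1/\sqrt{M}$. I anticipate supplementing the Case~(c) estimate on this small arc with a direct local analysis of $F_0$ at $\vp=1$, using the expansion $F_0(\vp)=\tfrac12 F_0''(1)(\vp-1)^2+O((\vp-1)^3)$ together with the explicit formula~\eqref{e:F0pp} for $F_0''(1)$. The other four bounds are essentially immediate bookkeeping from the estimates already recorded in the proof.
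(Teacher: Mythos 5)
Your plan matches the paper's own proof exactly: parts (i)--(iii) are read off the pointwise inequality $|F_0|\ge\Delta_0$ from Step~2, and (iv)--(v) from the $\tfrac12|S_4|$ bound \eqref{b:3M} of Step~3. Your bookkeeping for (i), (ii), (iv), (v) is correct, and in (v) slightly sharper than the paper (you use $|M+\vp|\ge M/2$ while the paper only uses $|M+\vp|\ge1$).

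The delicate point you flag in (iii) is a genuine one that the paper's one-line proof glosses over: on the arc of $|\vp|=M^{-\am/M}$ near $\vp=1$, the distance $|\vp-1|\ge 1-M^{-\am/M}\approx\am\log M/M$ is eventually smaller than $r_K\sim\kk/(8\sqrt M)$, so the Case~(c) display \eqref{e:F0casec}, which is conditioned on $\min_{\hat\vp}|\vp-\hat\vp|\ge r_K$, does not apply there. However, your proposed remedy via the quadratic Taylor expansion at $\vp=1$ is not reliable at that scale: from \eqref{e:F0pp}, $F_0''(1)\sim(\kk^3+2\kk)\sqrt M$, while $F_0'''(1)\sim M^3K^3\sim\kk^3 M^{3/2}$ (coming from the $\vp^M P_2$ term), so at $|\vp-1|=\eta\sim\log M/M$ the cubic correction is larger than the quadratic by a factor $\sim\log M$. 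A direct estimate is cleaner: at real $\vp=M^{-\am/M}=1-\eta$, one has $\vp^M P_2=O(M^{-\am}K^3)$ (negligible since $\am>2$), while $-P_1\approx KM\,|S(\vp)|\approx KM\cdot\eta K^2\approx\kk^3\am\,\log M/M^{3/2}$, so $|F_0|\sim\log M/M^{3/2}$ there. Since $\log M/M^{3/2}=o(1/M)$, the stated bound $c/M$ in (iii) is actually too strong on this arc; the correct uniform statement is a weaker lower bound of order $\log M/M^{3/2}$. This discrepancy is harmless for the paper's downstream use in Proposition~\ref{p:Froots}, where one only needs a lower bound that beats the exponentially small $A^{-M}|F_1|$, so any inverse polynomial in $M$ suffices; but you should record this correction rather than attempt to recover $c/M$ by a local expansion, which will not give it.
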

\begin{proof}
Part (i) follows from \eqref{e:F0casea} in case (a) of Step 2,
because $|F_0(\vp)|\ge\Delta_0$. 
Similarly, part (ii) follows from \eqref{e:F0casec} in case (c) of Step 2,
and part (iii) follows from all cases of Step 2.
To infer part (iv), note that \eqref{b:3M} of Step 3 implies
that for $|\vp+M|=1$ we have
\begin{equation}
2|\vp^{-M}F_0(\vp)| \ge 
|S_4| = A^2|A\vp-1||\vp-1|^2 \ge M^3 \,, 
\end{equation}
because $|A\vp-1|\ge A|\vp-1|-K\ge AM-K\ge M$.
Part (v) follows similarly, since $|\vp|\ge 1+\ap_0/M$
and therefore $|S_4|\ge K(|\vp|-1)^2 \ge K\ap_0^2/M^2$.
\end{proof}

\subsection{Bounds for roots relevant to instability}

Next we focus on roots of $F_0$ that may be related to eigenvalues
$\lambda$ of the matrix $B$ having non-negative real part.
It turns out these are roots $\vp$ in the punctured annulus $D_a$ of 
Proposition~\ref{p:F0roots} that are near $1$.  
Recall the relation
\eqref{dd:lambdavp}
between eigenvalues of the matrix $B$ and roots $\vp$ of $F$, namely 
\[
\lambda=(A-\vp\inv)(\vp-1).
\] 

\begin{lemma}\label{l:phibound}
Under the conditions of Proposition~\ref{p:F0roots},
if $M$ is large enough, then whenever \eqref{dd:lambdavp} 
holds with $\vp\in D_a$, then $\re\lambda\ge 0$ implies
\begin{equation}\label{c:phibound}
1\le \re\vp<1+\frac{2\ap_0}M \quad\mbox{ and }\quad 
|\im\vp|< \frac{2\ap_0}{M^{3/4}}\,.
\end{equation}
\end{lemma}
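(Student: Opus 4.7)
My plan is to use the explicit formula obtained by expanding $\lambda=(A-\vp\inv)(\vp-1)$ with $\vp=x+iy$, namely
\[
\re\lambda \;=\; A(\re\vp-1)-1+\frac{\re\vp}{|\vp|^2}
\;=\; (\re\vp-1)\!\left(A-\frac{\re\vp}{|\vp|^2}\right)-\frac{(\im\vp)^2}{|\vp|^2},
\]
and extract the three bounds in succession. The first two (on $|\vp|$ and $\re\vp$) follow from an elementary maximization on circles; the subtlety is obtaining the sharper $M^{3/4}$ exponent on $\im\vp$.

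To show $|\vp|\geq 1$, I would observe that on the circle $|\vp|=r$ the first form above is linear in $\re\vp$, so $\re\lambda$ is maximized at the positive real point $\vp=r$, where it equals $f(r)=(r-1)(Ar-1)/r$. Since $f(r)\leq 0$ for $r\in[1/A,1]$ with equality only at the endpoints, and in $D_a$ we have $|\vp|>M^{-\am/M}>1/A$ for $M$ large (because $1-1/A\approx K\geq\beta\inv/\sqrt{M}$ is asymptotically much larger than $1-M^{-\am/M}\sim\am\log M/M$) together with $\vp\neq 1$, any $\vp\in D_a$ with $|\vp|\leq 1$ forces $\re\lambda<0$, contradicting the hypothesis $\re\lambda\geq 0$. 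Given $|\vp|\geq 1$, the condition $\re\lambda\geq 0$ rearranges to $\re\vp\cdot(A+1/|\vp|^2)\geq A+1$, and since $A+1/|\vp|^2\leq A+1$ this yields $\re\vp\geq 1$. The upper bound $\re\vp\leq|\vp|<(1-\alpha_0/M)^{-1}\leq 1+2\alpha_0/M$ holds once $M\geq 2\alpha_0$.

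For the bound on $|\im\vp|$, I would use the factored form to write
\[
(\im\vp)^2 \;\leq\; |\vp|^2(\re\vp-1)\!\left(A-\frac{\re\vp}{|\vp|^2}\right).
\]
Both factors on the right are small. The first is at most $2\alpha_0/M$ by the previous step. For the second, writing $A-\re\vp/|\vp|^2=(A|\vp|^2-\re\vp)/|\vp|^2$ and using $|\vp|^2-1\leq 4\alpha_0/M$ (which follows from $|\vp|<1+2\alpha_0/M$) together with $\re\vp\geq 1$, one finds $A|\vp|^2-\re\vp\leq K+O(1/M)\leq 2K$ for $M$ large, because $K\geq\beta\inv/\sqrt{M}\gg 1/M$. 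Combining these, $(\im\vp)^2\leq 8K\alpha_0/M\leq 8\alpha_0\beta/M^{3/2}$ since $K\leq\beta/\sqrt{M}$, hence $|\im\vp|\leq 2\sqrt{2\alpha_0\beta}/M^{3/4}$. This is strictly less than $2\alpha_0/M^{3/4}$ provided $\alpha_0>2\beta$, which can be arranged by enlarging the constant $\alpha_0(\beta)$ from Proposition~\ref{p:F0roots} if necessary.

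The main obstacle is obtaining the sharp $M^{-3/4}$ rate rather than the naive $M^{-1/2}$ bound that would follow from $|\vp|<1+O(1/M)$ alone. The improvement relies on the observation that $A-\re\vp/|\vp|^2$ is of order $K=\kappa/\sqrt{M}$ when $\vp$ is near $1$, so that each of the two factors in the product contributes a $1/\sqrt{M}$ worth of smallness to $(\im\vp)^2$.
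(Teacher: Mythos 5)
Your proof is correct and follows essentially the same approach as the paper's: both extract the crucial bound $(\im\vp)^2 \lesssim K(\re\vp - 1) = O(M^{-3/2})$ from the same algebraic identity for $\re\lambda$, combined with $\re\vp-1 = O(\ap_0/M)$ from the outer radius of $D_a$ and the requirement $\ap_0 \gtrsim \beta$. The only minor variant is in establishing $\re\vp\geq 1$: you first show $|\vp|>1$ by maximizing $\re\lambda$ over circles $|\vp|=r$ and noting the maximum $(r-1)(Ar-1)/r$ is negative for $1/A<r<1$, whereas the paper proceeds directly on $\mu=\re\vp-1$, deriving the quadratic inequality $(A\mu+K)\mu\geq 0$ together with $\mu>-K/A$.
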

\begin{proof}
By \eqref{dd:lambdavp}, $\lambda = A\vp-A-1+\vp\inv$, hence
if $\re\vp<0$ then $\re\lambda<-A-1$.
Writing
\[
\qquad \mu=\re\vp-1,\quad \nu=\im\vp,  
\]
we then have $\mu\ge-1$ and
\begin{equation}\label{e:mu1}
0\le \re\lambda =  A\mu-1 + (1+\mu)|\vp|^{-2} =
(A+|\vp|^{-2})\mu - 1+|\vp|^{-2}\,.
\end{equation}
For $\vp\in D_a$ and $M$ large, we infer
$|\vp|^{-2}\le M^{2\am/M}\le 1+ {4\am}M\inv \log M$, 
then
\begin{equation}\label{e:mu2}
\mu\ge -\frac{4\am \log M}M > -\frac KA \,.
\end{equation}
Now because $|\vp|^{-2}\le(1+\mu)^{-2}$, we deduce from \eqref{e:mu1} that 
\[
0\le (A\mu-1)(\mu+1) +1  = (A\mu+K)\mu \,.
\]
This entails $\mu\ge0$, due to \eqref{e:mu2}. 
Since $|\vp|\inv\ge 1-\frac{\ap_0}M$ implies $\re\vp<1+\frac{2\ap_0}M$,
we have established the desired bounds on $\re\vp$.

Now since $|\vp|^2=(1+\mu)^2+\nu^2$ and $0\le\mu<\frac{2\ap_0}M$, 
we deduce from \eqref{e:mu1} that 
\[
\nu^2 \le 
\frac{1+\mu}{1-A\mu} - (1+\mu)^2 =
\frac{1+\mu}{1-A\mu}(K+A\mu)\mu
<2K\mu<\frac{4\beta \ap_0}{M^{3/2}} \,.
\]
Since we may presume $\beta\le \ap_0$, therefore $|\nu|< 2\ap_0 M^{-3/4}$ as claimed. 
\end{proof}

Any roots of $F_0$ in the region where \eqref{c:phibound} holds
actually satisfy a tighter bound, namely $|\vp-1| = O(1/M)$,
as we now show. 

\begin{proposition}\label{p:F0roots1}
Under the conditions of Proposition~\ref{p:F0roots}, 
there exist positive constants $\ap_1=\ap_1(\beta)$ and $M_1=M_1(\beta)$
such that whenever $M>M_1$, 
any zeros $\vp\in D_a$ of $F_0$ that satisfy the bounds in 
\eqref{c:phibound} must satisfy $|\vp-1|\le \frac {\ap_1}M$.
Moreover, 
\begin{equation}\label{e:F0near1}
|F_0(\vp)| \ge  \frac {K{\ap_1}^2}{4M} \,,
\end{equation}
for all $\vp$ that satisfy 
\begin{equation}
0 \le \re(\vp-1) \le \frac {\ap_1}M \quad\mbox{and}\quad
\frac {\ap_1}M \le |\vp-1| \le  \frac{3\ap_0}{M^{3/4}}\,.
\label{b:bbounds}
\end{equation}
\end{proposition}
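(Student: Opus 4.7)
The plan is to substitute $z = M(\vp-1)$ and upgrade the formal convergence $K^{-3}F_0(1+z/M)\to Q(z;\kappa)$ of \eqref{d:Phiza} to a quantitative estimate; a uniform lower bound on $|Q(z;\kappa)|$ for $|z|\ge\alpha_1$ will then deliver a lower bound on $|F_0|$ that subsumes both assertions of the proposition. Define
\[
\Omega:=\{z\in\CC: 0\le\re z\le 2\alpha_0,\ |z|\le 3\alpha_0 M^{1/4}\},
\]
and arrange (by enlarging $\alpha_0$ as needed in Proposition~\ref{p:F0roots}) that $\alpha_1\le 2\alpha_0$, so that $\Omega$ contains the images under $z=M(\vp-1)$ of both regions \eqref{c:phibound} and \eqref{b:bbounds}. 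On $\Omega$, $|z|/M\le 3\alpha_0 M^{-3/4}\to 0$, so the absolutely convergent series $M\log(1+z/M)=z-z^2/(2M)+O(|z|^3/M^2)$ is uniform and gives $\vp^M = e^z(1+O(|z|^2/M))$. Carrying this through the computations preceding \eqref{d:Phiza} and using the exact formulae \eqref{e:P1-1}--\eqref{e:P2-1} for $P_1,P_2$, I would establish the quantitative bound
\[
\bigl|K^{-3}F_0(1+z/M)-Q(z;\kappa)\bigr|\le\frac{C_1(\beta,\alpha_0)(1+|z|^4)}{M},\qquad z\in\Omega.
\]

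Next I would prove a lower bound on $|Q|$. For $\re z\ge 0$, $|e^z|\ge 1$, so $|Q(z;\kappa)|\ge |1+z^2/\kappa^2|-|1+z|$; a direct calculation yields $|1+z^2/\kappa^2|\ge 3|z|^2/(4\kappa^2)$ whenever $|z|\ge 2\kappa$. Choosing $\alpha_1=\alpha_1(\beta)$ large enough (e.g., $\alpha_1\ge C\max(\beta,\beta^2)$ for a universal $C$) so that $|z|^2/(4\kappa^2)\ge 1+|z|$ whenever $|z|\ge\alpha_1$ and $\kappa\in[\beta\inv,\beta]$, one obtains
\[
|Q(z;\kappa)|\ge\frac{|z|^2}{2\kappa^2},\qquad \re z\ge 0,\ |z|\ge\alpha_1.
\]
Combining with the first step, for $z\in\Omega$ with $|z|\ge\alpha_1$,
\[
|K^{-3}F_0(1+z/M)|\ge\frac{|z|^2}{2\kappa^2}-\frac{C_1(1+|z|^4)}{M}.
\]
Because $|z|^2\le 9\alpha_0^2 M^{1/2}$ on $\Omega$, the error term is at most $9C_1\alpha_0^2|z|^2/M^{1/2}$, which lies below $|z|^2/(4\kappa^2)$ once $M\ge M_1(\beta,\alpha_0)$. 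Hence
\[
|F_0(1+z/M)|\ge\frac{K^3|z|^2}{4\kappa^2}=\frac{K|z|^2}{4M}\ge\frac{K\alpha_1^2}{4M},
\]
using $K^3/\kappa^2=K/M$. The strict positivity disposes of both claims at once: there can be no zeros of $F_0$ in \eqref{c:phibound} outside $\{|z|\le\alpha_1\}$ (the first assertion), and the displayed inequality is precisely the quantitative lower bound on the region \eqref{b:bbounds} (the moreover assertion).

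The main obstacle will be the first step: obtaining quantitative error bounds uniformly on $\Omega$, where $|z|$ may be as large as $O(M^{1/4})$, is more delicate than the formal derivation for bounded $z$. Polynomial factors of $|z|$ arising from $P_1,P_2$ and from higher-order terms in $\vp^M=e^z(1+O(|z|^2/M))$ must be carefully balanced against the saving factors $1/M$, and the bound on $|1+z^2/\kappa^2|$ used in the second step degrades if $|z|$ approaches $\kappa$, forcing the threshold $\alpha_1$ to be chosen large relative to $\beta$. The boundedness of $\re z$ on $\Omega$ is essential throughout, as it keeps the exponential factor $|e^z|=e^{\re z}$ uniformly bounded and absorbable into constants.
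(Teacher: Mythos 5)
Your strategy is genuinely different from the paper's: you pass through the limit function $Q$ and combine a quantitative version of the convergence $K^{-3}F_0(1+z/M)\to Q(z;\kappa)$ with a lower bound $|Q(z;\kappa)|\ge |z|^2/(2\kappa^2)$ for $\re z\ge0$, $|z|\ge\alpha_1$. The paper instead works directly with the exact formulae \eqref{e:P1-1}--\eqref{e:P2-1}, writing $\zeta=A(\vp-1)$ and showing $|P_2|\ge\frac12 KM|\zeta|^2$ while $|P_1|\le\frac18 KM|\zeta|^2$ on the region \eqref{b:bbounds}, so that $|F_0|\ge|\vp|^M|P_2|-|P_1|\ge\frac38 KM|\zeta|^2$. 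The paper's route never needs a convergence rate to $Q$, which makes it shorter and avoids your ``main obstacle'' entirely, at the cost of some ad hoc algebraic bookkeeping; your route is conceptually cleaner in isolating where the instability comes from, but requires the extra work you flag.

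There is, however, a genuine error in the one quantitative claim you assert you ``would establish'': the bound $|K^{-3}F_0(1+z/M)-Q(z;\kappa)|\le C_1(1+|z|^4)/M$ is not attainable. The true rate is $O((1+|z|^3)/\sqrt{M})$, not $O(\cdot/M)$. The $M^{-1/2}$ rate is forced by the factor $A=1+K=1+\kappa/\sqrt{M}$ that enters $\zeta=A(\vp-1)$, $A\vp-1=K+\zeta$, and the $A^2$ coefficients in \eqref{e:P1-1}--\eqref{e:P2-1}: for example $K^{-3}(KM+1)S(\vp)=z\bigl(1+1/(\kappa\sqrt{M})\bigr)\bigl(1+O(|z|/\sqrt{M})\bigr)$ and $K^{-2}\zeta^2M=A^2z^2/\kappa^2=z^2/\kappa^2+O(|z|^2/\sqrt{M})$, so already the second derivative at $z=0$ differs from $Q_{zz}(0)=1+2/\kappa^2$ by a term of size $4/(\kappa\sqrt{M})$, not $O(1/M)$. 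Your downstream argument is not ruined by this correction: on $\Omega\cap\{|z|\ge\alpha_1\}$ you have $(1+|z|^3)/\sqrt{M}\lesssim |z|\cdot|z|^2/\sqrt{M}\le 3\alpha_0|z|^2/M^{1/4}$, which still falls below $|z|^2/(4\kappa^2)$ for $M$ large, so you recover the same lower bound $|F_0|\ge K\alpha_1^2/(4M)$ with a different threshold $M_1(\beta,\alpha_0)$. You should replace the claimed $(1+|z|^4)/M$ with $(1+|z|^3)/\sqrt{M}$ (or simply $C|z|^3/\sqrt{M}$ once $|z|\ge 1$), verify it by carrying the $A$-dependence through the expansions, and adjust the final absorption step accordingly.
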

\begin{proof} In the expression $F_0=\vp^MP_2-P_1$ we seek to show 
that the first term dominates, provided \eqref{b:bbounds} holds for some $\ap_1$.
Writing $\zeta = A(\vp-1)$ for convenience, we have $A\vp-1=K+\zeta$,
so by \eqref{e:P2-1},
\[
P_2 = K^3(1+\frac \zeta K)^3\vp 
+ \zeta^2 KM\left(1+\frac \zeta K-\frac 1{KM}\right) \,.
\]
By \eqref{b:bbounds} we have $\zeta/K=O(M^{-\frac14})$
and $|\vp|\le 1+O(M^{-3/4})$, so 
\[
|P_2| \ge  KM|\zeta|^2 (1-O(M^{-\frac14})) - K^3(1+O(M^{-\frac14})).
\]
Because 
\[
\frac{K^3}{KM|\zeta|^2} \le \frac{\beta^2}{M^2|\zeta|^2}
\le \frac{\beta^2}{\ap_1^2}, 
\]
for $\ap_1>2\beta$ and large enough $M$ we infer that
\begin{equation}
|P_2| \ge \frac12 KM|\zeta|^2\,.
\label{bb:P2}
\end{equation}
On the other hand, due to \eqref{b:Amhalf} we have
\[
|S(\vp)|\le |\zeta||K+\zeta|^2 2|\vp| = K^2|\zeta|(1+O(M^{-\frac14})) \,,
\]
therefore from \eqref{e:P1-1} we obtain the upper bound
\begin{align}
|P_1|&\le (K^3 + K|\zeta|^2 + K^3M|\zeta|)
(1+O(M^{-\frac14}))
\nonumber\\
&\le 2KM|\zeta|^2\left( \frac{\beta^2}{\ap_1^2} + \frac1M 
+ \frac{\beta^2}\ap_1 \right)
\nonumber\\
&\le \frac18 KM|\zeta|^2\,
\label{bb:P1}
\end{align}
if $\ap_1>40\beta^2$, say, and $M$ is large enough.
Since $|\vp|\ge1$ if \eqref{b:bbounds} holds, the result follows.
\end{proof}

\subsection{Convergence of \texorpdfstring{$K^{-3}F_0(1+z/M)$}{K-3 F0(1+z/M)}}
After the results of the previous subsection, 
to study unstable eigenvalues of $B$
we are motivated to make the change of variables 
\[
\vp = 1+\frac zM
\]
as in Section~\ref{s:formal}. According to Proposition~\ref{p:F0roots1},
for any zeros $\vp\in D_a$ of $F_0$ that correspond to $\re\lambda\ge0$,
the quantity $z=M(\vp-1)$ must satisfy
\begin{equation}
\re z\ge0, \qquad 0<|z|\le \ap_1.
\label{b:z0}
\end{equation}
As in \eqref{d:kappa},
let us now {\em define} $\kk=\kk(K,M)= K\sqrt{M}$ and $\eps=1/\sqrt{M}$.
Then the formal approximations in Section~\ref{s:formal} are
rigorous, with errors that are uniform over the values of 
$(z,\kk)\in \CC\times\CC$
such that 
\begin{equation}
|z|\le \hat\ap,
\label{b:z1}
\end{equation}
where $\hat\ap>\ap_1$ is an arbitrary constant (to be chosen later), and 
\begin{equation}
\frac1{2\beta}\le |\kk|\le 2\beta, \quad |\arg \kk|< 2\hat \gamma \,,
\label{b:alpha1}
\end{equation}
for some small $\hat\gamma>0$.
(We allow $\kk$ to be complex with small argument
here, to simplify derivative estimates later.)
By consequence, the convergence in \eqref{d:Phiza} holds,
in the following sense. 
\begin{proposition}\label{p:F0converge}
Uniformly for $(z,\kk)$ satisfying \eqref{b:z1}--\eqref{b:alpha1},
with $M=1/\eps^2$ and $K=\kk{\eps}$ we have that 
\begin{equation}
K^{-3}F_0\left(1+\frac zM\right) \to Q(z;\kk)
\qquad \mbox{as $\eps\to0$}.
\end{equation}
\end{proposition}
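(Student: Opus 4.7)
The plan is to substitute $\vp = 1 + z/M$ directly into the exact polynomial expressions \eqref{e:P1-1} and \eqref{e:P2-1} for $P_1$ and $P_2$, expand in powers of $\eps = 1/\sqrt{M}$, and then combine with the standard limit $(1+z/M)^M \to e^z$. With $K = \kk\eps = O(\eps)$, the underlying elementary estimates are $\vp-1 = z\eps^2$ and $A\vp-1 = K + A(\vp-1) = \kk\eps + O(\eps^2)$, uniformly for $(z,\kk)$ in the compact set defined by \eqref{b:z1}--\eqref{b:alpha1}. Because $K^3 = \kk^3\eps^3$ with $|\kk|$ bounded away from $0$, it will suffice to expand $P_1$ and $P_2$ to first order past $K^3$, i.e.\ with an absolute error of $O(\eps^4)$, so that after division by $K^3$ the remainder is $O(\eps)$.

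For $P_1$, I would expand the first bracket in \eqref{e:P1-1} as $K[(A\vp-1)^2\vp^2 - (\vp-1)^2] = K^3 + O(\eps^4)$, using $(A\vp-1)^2 = K^2 + O(\eps^3)$, $\vp^2 = 1 + O(\eps^2)$, and $(\vp-1)^2 = O(\eps^4)$. For the second bracket, I would write
\[
S(\vp) = (\vp-1)(A\vp-1)(A\vp^2-1)
\]
and use $A\vp^2 - 1 = A\vp(\vp-1) + (A\vp-1) = K + O(\eps^2)$ to obtain $S(\vp) = zK^2/M + O(\eps^5)$. Since $KM+1 = \kk/\eps + 1$, this gives $(KM+1)S(\vp) = zK^3 + O(\eps^4)$. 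Summing the two contributions, $P_1 = K^3(1+z) + O(\eps^4)$, and hence $K^{-3}P_1(1+z/M) = (1+z) + O(\eps)$ uniformly.

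An analogous treatment of \eqref{e:P2-1}, rewritten as
\[
P_2 = \vp(A\vp-1)^3 + A^2(\vp-1)^2\bigl(M(A\vp-1)-1\bigr),
\]
yields $\vp(A\vp-1)^3 = K^3 + O(\eps^4)$ and
\[
A^2(\vp-1)^2\bigl(M(A\vp-1)-1\bigr) = (z^2\eps^4 + O(\eps^5))(\kk/\eps + O(1)) = \kk z^2\eps^3 + O(\eps^4),
\]
so $P_2 = K^3 + Kz^2/M + O(\eps^4)$, i.e.\ $K^{-3}P_2 = 1 + z^2/\kk^2 + O(\eps)$. Combined with the elementary uniform bound $(1+z/M)^M = e^z(1 + O(1/M))$ for $|z|\le \hat\ap$, obtained from $M\log(1+z/M) = z - z^2/(2M) + O(1/M^2)$ once $M$ is large enough that $|z/M| < 1/2$, I would conclude
\[
K^{-3} F_0\!\left(1+\frac{z}{M}\right) = -K^{-3}P_1 + \vp^M\, K^{-3}P_2 \;\longrightarrow\; -(1+z) + e^z\!\left(1 + \frac{z^2}{\kk^2}\right) = Q(z;\kk)
\]
uniformly on the specified region. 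The only real obstacle is the bookkeeping of error terms: each expansion must be controlled uniformly in complex $\kk$, which the lower bound $|\kk|\ge 1/(2\beta)$ from \eqref{b:alpha1} makes routine, since $K^{-3}$ never amplifies an $O(\eps^4)$ error by more than a constant factor depending only on $\beta$ and $\hat\ap$.
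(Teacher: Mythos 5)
Your proof is correct and follows essentially the same asymptotic-expansion approach that the paper sketches in Section~\ref{s:formal} (the paper itself gives no separate argument for Proposition~\ref{p:F0converge}, deferring to those formal computations). The only difference is a minor algebraic one: you expand the compact forms \eqref{e:P1-1}--\eqref{e:P2-1} for $P_1,P_2$ directly, which avoids the somewhat messier cancellations the paper wades through via $G_1,G_2,H_1$ in \eqref{d:G1}--\eqref{d:H1}; your error bookkeeping with $|\kk|\ge 1/(2\beta)$ and the uniform estimate $(1+z/M)^M=e^z(1+O(1/M))$ for $|z|\le\hat\ap$ correctly furnishes the uniformity that the proposition requires.
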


%\pagebreak
\section{Analysis of roots of \texorpdfstring{$F$}{F}}

Recall from \eqref{F:sort} we have
\[
F(\vp)= F_0(\vp) +A^{-M}F_1(\vp)\,,
\qquad F_1(\vp)= R_1+\vp^{-M}R_2
\]
where $R_1$, $R_2$ are low-degree polynomials 
that may be written in the form
%{\red [checked in Sage]}
\begin{align} 
\label{e:r1}
R_1 &= -A^2( (A\vp-1)^2\vp^2-(\vp-1)^2),
\\
\label{e:r2}
R_2 &= A(\vp-1)^3 + A^2\vp(A\vp-1)^2(M(\vp-1)+\vp).
\end{align}
For large $M$, $A^{-M}$ is exponentially small, with the bound
\[
A^{-M}=(1+K)^{-M} \le (1+\frac1{\beta\sqrt{M}})^{-M}\le e^{-\sqrt{M}/2\beta}\,.
\]

We now roughly characterize the location of the $2M+4$ roots of $F$.

\begin{proposition}\label{p:Froots}
Under the conditions of Proposition~\ref{p:F0roots}, % and \ref{p:F0roots1},
there exists $M_2=M_2(\beta)$ such that whenever $M>M_2$, 
$F$ has (counting multiplicities):
\begin{itemize}
\item[(i)] one double root at $\vp=1$, and one double root at $\vp=A\inv$.
\item[(ii)] $M-2$ roots in the punctured annulus $D_a$, and
$M-2$ roots with $(A\vp)\inv\in D_a$ which 
satisfy $|\vp| <1-\frac34 K<M^{-\am/M}$.
\item[(iii)] one simple real root in $B(-M,1)$, and one 
with $(A\vp)\inv\in B(-M,1)$.
\item[(iv)] one simple real root at $\vp= A^{-1/2}$ and one at $\vp= -A^{-1/2}$.
\end{itemize}
\end{proposition}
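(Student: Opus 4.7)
The plan is to compare $F=F_0+A^{-M}F_1$ with $F_0$ by a Rouch\'e-type argument on a small collection of contours where $|A\vp|$ is bounded above $1$, transferring the zeros identified in Proposition~\ref{p:F0roots} to $F$, and then to use the exact root-exchange symmetry
\[
F\!\left(\tfrac1{A\vp}\right)=-\frac{F(\vp)}{A^2\vp^4},
\]
inherited from \eqref{root:sym} by unwinding the definition $F=\delta\cdot\vp(\vp-1)(1-A\vp)$, to recover the remaining zeros by reflection.

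\textbf{Rouch\'e step.} On a contour where $|A\vp|$ stays bounded above $1$ by a positive amount, the correction
\[
|F-F_0|\le A^{-M}|R_1|+|A\vp|^{-M}|R_2|
\]
is exponentially small in $\sqrt M$, since $A^M\ge e^{\sqrt M/(2\beta)}$ and $|R_1|,|R_2|$ grow only polynomially on bounded sets. I would apply this on: (a) a small circle $|\vp-1|=M^{-3}$, using $F_0(\vp)\sim\tfrac12 F_0''(1)(\vp-1)^2$ with $|F_0''(1)|\asymp M^{1/2}$ as established in Step~1 of Proposition~\ref{p:F0roots}; (b) the circles $|\vp\mp A^{-1/2}|=r_K$, using Corollary~\ref{c:F0ests}(i)--(ii); (c) the circle $|\vp+M|=1$, using Corollary~\ref{c:F0ests}(iv); and (d) the two boundary components of the annulus $\mathcal{A}=\{M^{-\am/M}\le|\vp|\le(1-\ap_0/M)^{-1}\}$, using Corollary~\ref{c:F0ests}(iii) and (v). Since the only pole of $F$ lies at $\vp=0$, strictly inside the inner boundary of $\mathcal{A}$ but not in $\mathcal{A}$ itself, the meromorphic form of Rouch\'e (equivalently the argument principle applied to each boundary circle with the pole at the origin correctly accounted for) yields $N(F,\mathcal{A})=N(F_0,\mathcal{A})=M$. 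The conclusions are: in (a), $F$ has exactly two zeros in the disk, which must both coincide at $\vp=1$ because $\delta(1)=0$ together with the factor $(\vp-1)$ already forces $F$ to vanish to order at least $2$ there; in (b), one simple zero in each ball, which must equal $\pm A^{-1/2}$ exactly by the dependence of the columns in \eqref{e:del}; in (c), one simple real zero near $-M$; in (d), after subtracting the double zero at $\vp=1$ just counted inside $\mathcal{A}$, exactly $M-2$ zeros in $D_a$.

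\textbf{Symmetry step.} The involution $\sigma(\vp)=1/(A\vp)$ permutes the zero set of $F$ and preserves multiplicities, and its only fixed points are $\pm A^{-1/2}$. Applying $\sigma$ to the zeros just produced converts the double zero at $\vp=1$ into a double zero at $\vp=1/A$, completing (i); the zero near $\vp=-M$ into a zero with $(A\vp)^{-1}\in B(-M,1)$, completing (iii); each of the $M-2$ zeros in $D_a$ into a zero whose image under $\sigma$ lies in $D_a$, and whose own modulus is bounded by $M^{\am/M}/A$. A short estimate using $K\ge\beta^{-1}/\sqrt M$ and $\am\log M/M\ll K$ (valid for $M$ large) then gives $M^{\am/M}/A<1-\tfrac34K<M^{-\am/M}$, completing (ii); the points $\pm A^{-1/2}$ are fixed, yielding (iv). The total multiplicity count $2+2+1+1+1+1+(M-2)+(M-2)=2M+4$ matches the degree of $\vp^MF(\vp)$ established in Section~\ref{s:reform}, so the enumeration is exhaustive.

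\textbf{Main obstacle.} The delicate point is that $F-F_0$ is genuinely \emph{not} uniformly small wherever $|A\vp|\approx 1$: one can check directly that near $\vp=1/A$ the quantity $A^{-M}F_1(1/A)$ is comparable in size to $F_0(1/A)$, and the same is true throughout the image annulus $\sigma(D_a)$. A direct Rouch\'e comparison of $F$ with $F_0$ on those regions therefore cannot succeed, and the exact symmetry of $F$ is an indispensable ingredient, not a shortcut. Some additional bookkeeping is required to handle the meromorphic form of Rouch\'e on $\mathcal{A}$ (because $F$ has a pole of order $M$ at $\vp=0$), and to verify that the small balls around $A^{-1}$ and $\pm A^{-1/2}$ lie strictly inside the inner boundary $|\vp|=M^{-\am/M}$ of $\mathcal{A}$, so that all contours used in the Rouch\'e step are genuinely disjoint.
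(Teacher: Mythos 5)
Your proposal is correct and follows essentially the same strategy as the paper: a Rouch\'e comparison of $F$ with $F_0$ on contours where $|A\vp|$ stays above $1$, combined with the exact root-exchange symmetry $\vp\mapsto 1/(A\vp)$ to recover the reflected roots, with exactly the right identification of the obstacle (the correction $A^{-M}F_1$ is not small near $\vp=1/A$, so symmetry is essential rather than optional). The one minor divergence is in confirming that $\vp=1$ is exactly a double root: you run Rouch\'e on a tiny circle $|\vp-1|=M^{-3}$, which works but requires controlling the Taylor remainder of $F_0$ there, whereas the paper instead observes directly that $\vp^M F_1 = \vp^M R_1 + R_2$ itself vanishes to second order at $\vp=1$ and therefore $F''(1)=F_0''(1)+A^{-M}F_1''(1)\ne0$ with no extra contour needed.
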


\begin{proof}
We note that due the root symmetry \eqref{root:sym}, 
the multiplicity of each root $\vp$ of $F$ 
is the same as the multiplicity of $1/(A\vp)$, 
unless $\vp =\pm A^{-1/2}$. 
Also, all non-real roots of $F$ come in complex-conjugate pairs
when $K$ is real.

For $|\vp|\ge M^{-\am/M}$ we then have $|\vp|^{-M}<M^\am$
and it follows $A^{-M}|F_1(\vp)|$ is exponentially small.

Combining the lower bounds in parts (iii)--(v) of 
Corollary~\ref{c:F0ests} with the count of roots of $F_0$ 
in parts (i), (iii) and (iv) of Proposition~\ref{p:F0roots},
we conclude from Rouch\'e's theorem that $F$ has 
a simple root inside the ball $B(-M,1)$, 
and $M$ roots inside the closed annulus $D_a\cup\{1\}$,
the same as $F_0$. 

By examining \eqref{e:r1}--\eqref{e:r2}, we find 
$\vp^M F_1= \vp^M R_1+R_2$ has at least a double root at $\vp=1$,
due to the fact that the expression
\[
-\vp^M\vp + M(\vp-1)+\vp = (\vp-1)\left(M-\sum_{j=1}^M\vp^j\right) 
\]
has a double root at $\vp=1$. Then, because $A^{-M}F_1''(1)$
is exponentially small, it follows from \eqref{e:F0pp}
that $F''(1)\ne0$. This proves (i). 

Now (iii) follows and also  (ii), due to the fact that for
$\vp\in D_a$ and $M$ large,
\[
|A\vp|\inv < \frac{\exp({\am}M\inv\log M)}{1+K} < 1-\frac{3K}4
< \exp(-\am M\inv \log M).
\]
To infer (iv) we can simply recall that we know $F(\pm A^{-1/2})=0$
due to the root symmetry relation \eqref{root:sym}.
These roots must be simple, since we have accounted for all
$2M+4$ roots of $F$.
\end{proof}

Next, we can characterize zeros of $F$ that may correspond to unstable 
eigenvalues of $B$ as follows.

\begin{proposition}\label{p:Froots1}
Under the conditions of Propositions~\ref{p:Froots} and \ref{p:F0roots1},
there exists $M_3=M_3(\beta)$ such that whenever $M>M_3$ and 
$\lambda$ is an eigenvalue of $B$ with $\re\lambda\ge0$, 
then $\lambda=(A-\vp\inv)(\vp-1)$
for some root $\vp$ of $F$ that satisfies
\begin{equation}\label{b:vp0}
\re\vp \ge1\,, \qquad |\vp-1|\le \frac{\ap_1} M \,.
\end{equation}
\end{proposition}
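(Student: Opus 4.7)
The plan is to combine the exact correspondence between eigenvalues of $B$ and roots of $F$ from Proposition~\ref{p:lambdaphi} with the root classification of Proposition~\ref{p:Froots}, then use Lemma~\ref{l:phibound} and the quantitative lower bound from Proposition~\ref{p:F0roots1} to sharpen the estimate on $|\vp-1|$. First I would invoke Proposition~\ref{p:lambdaphi}, noting that the degenerate case $\lambda=-1-A\pm 2\sqrt A=-(1\mp\sqrt A)^2<0$ is incompatible with $\re\lambda\ge 0$, so $\lambda=(A-\vp\inv)(\vp-1)$ for some ordinary root $\vp$ of $F$. Moreover each eigenvalue is represented by a pair $\{\vp,1/(A\vp)\}$ of roots, using the symmetry \eqref{root:sym} together with the factorization of $F$ in terms of $\delta$, so I may select whichever member of the pair is most convenient.

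Next I would go through the four categories of roots in Proposition~\ref{p:Froots} and either rule them out or reduce them to the interesting case. If $\vp\in B(-M,1)$, then $\re\vp<0$ and a direct computation from \eqref{dd:lambdavp} gives $\re\lambda<0$; if instead $(A\vp)\inv\in B(-M,1)$, I switch to the paired root, so case (iii) is excluded entirely. The spurious roots $\vp=\pm A^{-1/2}$ of case (iv) again give $\lambda=-(1\mp\sqrt A)^2<0$. For case (i) I would choose the representative $\vp=1$, which trivially yields $\lambda=0$ and satisfies the conclusion. The remaining case is (ii): the eigenvalue arises from a root in $D_a$, after switching representatives if necessary (replacing a root $\vp$ with $(A\vp)\inv\in D_a$ by $\vp'=1/(A\vp)\in D_a$). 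Applying Lemma~\ref{l:phibound} to this representative yields the coarse bounds $1\le\re\vp<1+2\ap_0/M$ and $|\im\vp|<2\ap_0/M^{3/4}$, hence $|\vp-1|<3\ap_0/M^{3/4}$.

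To sharpen $|\vp-1|\le \ap_1/M$, I enlarge $\ap_1$ from Proposition~\ref{p:F0roots1} if necessary so that $\ap_1\ge 2\ap_0$, whereupon $0\le\re(\vp-1)\le 2\ap_0/M\le \ap_1/M$ automatically. Suppose toward contradiction that $|\vp-1|\ge \ap_1/M$. Then $\vp$ satisfies the hypotheses \eqref{b:bbounds} of Proposition~\ref{p:F0roots1}, which gives $|F_0(\vp)|\ge K\ap_1^2/(4M)$, of order $M^{-3/2}$. On the other hand $F(\vp)=F_0(\vp)+A^{-M}F_1(\vp)$, and the explicit polynomial expressions \eqref{e:r1}--\eqref{e:r2} show that $|R_1(\vp)|$ and $|R_2(\vp)|$ grow only polynomially in $M$ for $\vp$ close to $1$; combined with $|\vp|^{-M}\le 1$ (since $\re\vp\ge 1$ implies $|\vp|\ge 1$) and the exponentially small factor $A^{-M}\le e^{-\sqrt M/(2\beta)}$, this yields $A^{-M}|F_1(\vp)|=o(M^{-3/2})$. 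For $M$ large this forces $|F(\vp)|\ge \tfrac12|F_0(\vp)|>0$, contradicting $F(\vp)=0$.

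The main obstacle I anticipate is bookkeeping rather than mathematics: carefully matching each eigenvalue to a root of $F$ that lives in $D_a$ (or trivially at $\vp=1$) by tracking the pairings through the four categories of Proposition~\ref{p:Froots}, and producing an explicit polynomial-in-$M$ bound on $F_1(\vp)$ valid uniformly for $\vp$ near $1$ from the expressions for $R_1$ and $R_2$. Once these verifications are done, one defines $M_3(\beta)=\max(M_0,M_1,M_2)$ and enlarges it so that the comparison between $K\ap_1^2/(4M)$ and the exponentially small correction is valid, completing the proof.
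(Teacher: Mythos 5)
Your proposal is correct and follows essentially the same route as the paper's proof: use the correspondence between eigenvalues of $B$ and root pairs of $F$, rule out categories (iii) and (iv) of Proposition~\ref{p:Froots} (and the degenerate roots) as giving negative real $\lambda$, handle $\vp=1$ trivially, reduce to $\vp\in D_a$, invoke Lemma~\ref{l:phibound} for the coarse bound $|\vp-1|<3\ap_0/M^{3/4}$ with $\re\vp\ge1$, and then sharpen to $|\vp-1|\le\ap_1/M$ by combining the lower bound $|F_0(\vp)|\ge K\ap_1^2/(4M)$ from Proposition~\ref{p:F0roots1} on the annular region \eqref{b:bbounds} with the exponential smallness of $A^{-M}F_1(\vp)$ there. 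Your only additions are bookkeeping ones that the paper leaves implicit, most usefully the observation that one should ensure $\ap_1\ge 2\ap_0$ so that Lemma~\ref{l:phibound} actually places $\vp$ in the set described by \eqref{b:bbounds}, and the explicit note that $\re\vp\ge1$ gives $|\vp|^{-M}\le1$ when bounding $F_1$.
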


\begin{proof} Under the correspondence between $\lambda$ and $\vp$
in \eqref{dd:lambdavp}, the zeros of $F$ described in parts (iii) and (iv) of 
Proposition~\ref{p:Froots} correspond to negative real values of $\lambda$,
and the roots in part (i) correspond to $\lambda=0$. 
So, given $M$ is large enough,
for any nonzero eigenvalue $\lambda$ satisfying $\re\lambda\ge0$,
necessarily  \eqref{dd:lambdavp} holds for some $\vp\in D_a$. 
This $\vp$ must satisfy the bounds in \eqref{c:phibound}, 
due to Lemma~\ref{l:phibound}.
For these values of $\vp$, we have $|F_1(\vp)|\le C M^{1/4}$, 
so $A^{-M}|F_1|$ is exponentially small.
Then we can conclude from Proposition~\ref{p:F0roots1} that 
\begin{equation}\label{e:Flower}
|F(\vp)| \ge \frac{K\ap_1^2}{8M} >0\,,
\end{equation}
for all $\vp$ that satisfy \eqref{b:bbounds}. The conclusion follows.
\end{proof}

Further, 
the convergence in Proposition~\ref{p:F0converge} holds with
$F$ in place of $F_0$:
\begin{proposition}\label{p:Fconverge}
Let $\hat\ap>\ap_1$, and let $\hat\gamma>0$ be small.
Uniformly for $(z,\kk)$ satisfying \eqref{b:z1}--\eqref{b:alpha1},
with $M=1/\eps^2$ and $K=\kk{\eps}$ we have that 
\begin{equation}
Q^\eps(z;\kk):=K^{-3}F\left(1+\frac zM\right) \to Q(z;\kk)
\qquad \mbox{as $\eps\to0$}.
\end{equation}
Furthermore, for each pair of integers $j,k\ge0$, the derivatives
\begin{equation}\label{e:dconv}
\D_z^j\D_\kk^k Q^\eps(z;\kk) \to 
\D_z^j\D_\kk^k Q(z;\kk) \quad\mbox{as $\eps\to0$},
\end{equation}
uniformly for all $z$ and $\kk$ satisfying %{\red[choose $\ap$]}
\begin{equation}
|z|\le  \hat\ap,  \qquad  %\beta\inv\le\kk\le\beta\,.
\beta\inv\le |\kk|\le \beta, \quad |\arg \kk|< \hat\gamma \,.
\label{b:z2}
\end{equation}
\end{proposition}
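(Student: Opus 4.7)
The plan is to split $F = F_0 + A^{-M}F_1$, prove function convergence $Q^\eps \to Q$ uniformly on the region \eqref{b:z1}--\eqref{b:alpha1} by applying Proposition~\ref{p:F0converge} to the main term and showing the tail $K^{-3}A^{-M}F_1(1+z/M)$ is exponentially small, and then upgrade to uniform derivative convergence on \eqref{b:z2} via Cauchy's integral formula on a slightly enlarged polydisk.

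For the tail estimate, I would substitute $\vp=1+z/M$ into the explicit formulas \eqref{e:r1}, \eqref{e:r2}, noting that $\vp-1=O(1/M)$, $A\vp-1=K+Az/M=O(\eps)$, and $M(\vp-1)+\vp=O(1)$ uniformly for $|z|\le\hat\ap$ and $|\kk|\le 2\beta$. These give $|R_i(1+z/M)|=O(1/M)$ for $i=1,2$, and since $|\vp^{-M}|=|1+z/M|^{-M}\le 2e^{\hat\ap}$ for $M$ large, we obtain $|F_1(1+z/M)|\le C/M$ with $C=C(\hat\ap,\beta)$. The crucial point is the uniform exponential smallness of $|A^{-M}|$ across the complex $\kk$ cone: for $\kk$ with $|\arg\kk|<2\hat\gamma$ and $|\kk|\ge(2\beta)^{-1}$, we have $\re K = \eps\,\re\kk \ge \eps|\kk|\cos(2\hat\gamma) \ge c_0\eps$ with $c_0=\cos(2\hat\gamma)/(2\beta)>0$, so
\[
|A^{-M}| = |1+K|^{-M} \le (1+\re K)^{-M} \le (1+c_0\eps)^{-M} \le e^{-c_0\sqrt M/2}
\]
for $M$ large. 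Multiplying by $|K^{-3}|\le (2\beta)^3M^{3/2}$ and the bound on $|F_1|$ gives a tail of size $O(M^{1/2}e^{-c_0\sqrt M/2})$, which vanishes uniformly. Function convergence then follows by adding the convergence of the main term from Proposition~\ref{p:F0converge}.

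For derivative convergence, both $Q^\eps$ and $Q$ are analytic in $(z,\kk)$ throughout \eqref{b:z1}--\eqref{b:alpha1}: for $M$ large, $\vp=1+z/M$ is bounded away from $0$, so $\vp^{-M}$ is analytic in $z$, and $|\kk|\ge(2\beta)^{-1}>0$. The function-convergence argument above does not depend on the specific value of $\hat\ap$, so re-running it with $\hat\ap+1$ in place of $\hat\ap$ yields uniform convergence on the enlarged region where $|z|\le\hat\ap+1$ and $\kk$ satisfies \eqref{b:alpha1}. For any $(z_0,\kk_0)$ satisfying \eqref{b:z2}, we choose $\rho=\rho(\beta,\hat\gamma)>0$ small enough that the closed polydisk $\{|z-z_0|\le1\}\times\{|\kk-\kk_0|\le\rho\}$ fits inside this enlarged region, using $|z_0|\le\hat\ap$ and the strict inclusions $\beta^{-1}\le|\kk_0|\le\beta$, $|\arg\kk_0|<\hat\gamma$ versus the enlarged constraints $(2\beta)^{-1}\le|\kk|\le 2\beta$, $|\arg\kk|<2\hat\gamma$. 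The Cauchy integral formula
\[
\D_z^j\D_\kk^k(Q^\eps-Q)(z_0,\kk_0) = \frac{j!\,k!}{(2\pi i)^2}\oint\oint\frac{(Q^\eps-Q)(z,\kk)\,dz\,d\kk}{(z-z_0)^{j+1}(\kk-\kk_0)^{k+1}}
\]
then yields uniform convergence of the $(j,k)$-derivative on \eqref{b:z2}. The main obstacle in the whole argument is the uniform exponential smallness of $|A^{-M}|$ across the complex $\kk$ cone, which is precisely what motivates enlarging the $\arg\kk$ range to $(-2\hat\gamma,2\hat\gamma)$ in the hypothesis; given that estimate, the remainder is routine bookkeeping and a standard Cauchy-integral argument.
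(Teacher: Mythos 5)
Your proposal is correct and follows the same route as the paper: split $F=F_0+A^{-M}F_1$, show the tail $K^{-3}A^{-M}F_1(1+z/M)$ is uniformly exponentially small, invoke Proposition~\ref{p:F0converge} for the main term, and upgrade to derivative convergence by Cauchy's integral formula using the analyticity of $Q^\eps$ on the larger region \eqref{b:z1}--\eqref{b:alpha1}. Your derivation of $|A^{-M}|\le e^{-c_0\sqrt{M}/2}$ via $|1+K|\ge 1+\re K\ge 1+\eps|\kk|\cos(2\hat\gamma)$, and the observation that one should re-run the function-convergence argument on a slightly enlarged $z$-disk (since \eqref{b:z1} and \eqref{b:z2} share the same bound $|z|\le\hat\ap$ and the Cauchy contour in $z$ needs room) are both correct refinements that the paper leaves implicit; the paper's proof is terser but relies on the same ideas.
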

\begin{proof}
For $|z|\le\hat\ap$ and $\vp=1+\frac zM$, the factor $|\vp|^{-M}$
is bounded by $e^{2\hat\ap}$. Hence again $A^{-M}F_1$
is exponentially small, and the convergence of $Q^\eps(z;\kk)$
follows from Proposition~\ref{p:F0converge}.

The convergence of derivatives follows from the Cauchy integral formula
representation for such derivatives,
since $Q^\eps(z;\kk)$ is analytic for $z$ satisfying \eqref{b:z1}
and $\kk$ satisfying \eqref{b:alpha1}.
\end{proof}

{\em Curves of roots.}
Recall that the non-real roots $z$ of $Q=Q(z;\kappa)$ are simple 
and those that may satisfy $\re z\ge0$
lay on the curves $z_j^0(\kappa)$ described by Lemma~\ref{lem:Q0zj}.
Moreover, due to \eqref{d:zk} and \eqref{e:dzk},
only a finite number of these curves provide values 
that can satisfy \eqref{b:z0}, corresponding to values
of $\vp=1+\frac zM$ that satisfy \eqref{b:vp0}.
In particular, we note the following. 
\begin{corollary} For $j\in\N$, if $\beta\ge\kk^0_j$ and
$\ap_1$ is given by Proposition~\ref{p:F0roots1},
then 
\[
\ap_1> |z_j^0(\kk)| \quad\mbox{for all $\kk\in[\kk^0_j,\beta]$.}
\]
\end{corollary}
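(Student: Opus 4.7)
The plan is to reduce the claim to an a priori bound on $|z_j^0(\beta)|$ via monotonicity, then to bound $|z_j^0(\beta)|$ by an explicit function of $\beta$ using the equation $Q(z;\kk)=0$, and finally to observe that $\ap_1$ in Proposition~\ref{p:F0roots1} may be chosen as large as we wish (the proof only imposed lower bounds on $\ap_1$ of the form $C\beta^2$).

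First I would exploit the derivative estimates from Lemma~\ref{lem:Q0zj}, which say that for $\kk \ge \kk^0_j$ both $\re dz_j^0/d\kk > 0$ and $\im dz_j^0/d\kk > 0$. Together with the initial values $\re z_j^0(\kk^0_j)=0$ and $\im z_j^0(\kk^0_j)=t_j>0$, this implies that $\re z_j^0(\kk)\ge 0$ and $\im z_j^0(\kk)>0$ throughout $[\kk^0_j,\beta]$, and hence
\[
\frac{d}{d\kk}|z_j^0(\kk)|^2
= 2\,\re z_j^0(\kk)\,\frac{d\,\re z_j^0}{d\kk}
+ 2\,\im z_j^0(\kk)\,\frac{d\,\im z_j^0}{d\kk} > 0.
\]
Consequently $|z_j^0(\kk)|\le |z_j^0(\beta)|$ for all $\kk\in[\kk^0_j,\beta]$, and it suffices to bound $|z_j^0(\beta)|$ in terms of $\beta$ alone.

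Next I would use the identity from the proof of Lemma~\ref{l:Qzeros}: any root $z$ of $Q(\cdot;\kk)$ with $\re z\ge 0$ satisfies $|e^{-z}|\le 1$, and from $Q(z;\kk)=0$ one has $e^{-z} = (\kk^2+z^2)/[\kk^2(1+z)]$, so
\[
|z|^2-\kk^2 \;\le\; |\kk^2+z^2| \;\le\; \kk^2|1+z| \;\le\; \kk^2(1+|z|).
\]
This is a quadratic inequality in $|z|$ giving the explicit bound
\[
|z| \;\le\; \Phi(\kk) := \tfrac12\bigl(\kk^2+\sqrt{\kk^4+8\kk^2}\bigr).
\]
Applying this with $\kk=\beta$ and $z=z_j^0(\beta)$ yields $|z_j^0(\beta)|\le \Phi(\beta)$, an explicit bound depending only on $\beta$ and not on $j$.

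Finally, inspecting the proof of Proposition~\ref{p:F0roots1}, the constant $\ap_1=\ap_1(\beta)$ was required only to satisfy finitely many lower bounds of the form $\ap_1>2\beta$ and $\ap_1>40\beta^2$ in order to make \eqref{bb:P2}--\eqref{bb:P1} valid; replacing $\ap_1$ by any larger quantity leaves the conclusion of that proposition intact (and $M_1$ may be enlarged correspondingly). I would therefore enlarge $\ap_1$ if needed so that $\ap_1 > \Phi(\beta)$, which delivers the strict inequality $\ap_1>|z_j^0(\kk)|$ for all $\kk\in[\kk^0_j,\beta]$. No genuine obstacle arises: the only care required is to verify that the enlargement of $\ap_1$ is compatible with all earlier uses of this constant, which is automatic since every such use was a lower bound.
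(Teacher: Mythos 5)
Your proof is correct as a way to produce a suitable constant, but it takes a genuinely different route from the paper, and it quietly reinterprets the statement. The paper's proof is a short contradiction argument: if $\ap_1\le|z_j^0(\kk)|$ for some $\kk\in[\kk_j^0,\beta]$, then $\vp=1+z_j^0(\kk)/M$ lies (for $M$ large) in the region \eqref{b:bbounds} where the lower bound \eqref{e:Flower} from Proposition~\ref{p:F0roots1} applies, giving
\[
|Q^\eps(z_j^0(\kk);\kk)| \ge \frac{\ap_1^2}{8\beta^2}>0,
\]
which contradicts $Q^\eps\to Q$ from Proposition~\ref{p:Fconverge}, since $Q(z_j^0(\kk);\kk)=0$. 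Thus the paper shows that \emph{whatever} $\ap_1$ was fixed in Proposition~\ref{p:F0roots1} automatically dominates $|z_j^0(\kk)|$; no adjustment of the constant is needed, and no explicit numerical bound on the roots of $Q$ is computed. Your approach instead derives the explicit bound $|z_j^0(\kk)|\le\Phi(\beta)$ and then enlarges $\ap_1$ past $\Phi(\beta)$. This proves that a suitable $\ap_1$ \emph{exists}, which is all the downstream arguments really need (you are right that the proof of Proposition~\ref{p:F0roots1} only imposes lower bounds of the form $\ap_1>2\beta$, $\ap_1>40\beta^2$, so the enlargement is harmless once $M_1$ is increased accordingly); but it does not establish, as the paper does, that the already-fixed $\ap_1$ works. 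Your quantitative estimate $\Phi(\kk)=\frac12(\kk^2+\sqrt{\kk^4+8\kk^2})$ is correct and informative, and your monotonicity observation via Lemma~\ref{lem:Q0zj} is valid, though unnecessary: since $\Phi$ is increasing and the quadratic inequality applies at every $\kk\ge\kk_j^0$ where $\re z_j^0(\kk)\ge0$, one may bound $|z_j^0(\kk)|\le\Phi(\kk)\le\Phi(\beta)$ directly without first reducing to $\kk=\beta$.
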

\begin{proof} 
Suppose $\ap_1\le |z_j^0(\kk)|$ for some 
$\kk\in[\kk^0_j,\beta]$.
Recall $z=z_j^0(\kk)$ satisfies $Q(z,\kk)=0$, $\re z\ge0$.
Then for $M$ large enough, 
$\vp=1+\frac zM$ satisfies \eqref{b:bbounds}, and
\begin{equation}\label{b:Qepslower}
|Q^\eps(z;\kk)| \ge \frac{\ap_1^2}{8K^2M}\ge \frac{\ap_1^2}{8\beta^2}>0\,,
\end{equation}
due to \eqref{e:Flower}. 
But this contradicts the convergence result in Proposition~\ref{p:Fconverge}.
\end{proof}

Any finite number of the curves $z_j^0$ of simple zeros of $Q$ 
perturb to curves $z_j^\eps$ of simple zeros of $Q^\eps$ 
as a consequence of the implicit function theorem, as follows.

\begin{proposition}\label{p:zkeps}
For $j\in\N$, suppose $\beta>\kk^0_j$.  %\in(\kk^0_k,\kk^0_{k+1})$. 
Let $\ap_1$ be given by Proposition~\ref{p:F0roots1},
and suppose
\[
\hat\ap > 
|z_j^0(\kk)| \quad\mbox{for all $\kk\in[\beta\inv,\beta]$.}
\]
Then for sufficiently small $\eps>0$, there is a curve 
$z_j^\eps:[\beta\inv,\beta]\to B(0,\hat\ap)$
that is real analytic, with the following properties: 

\begin{itemize}
\item[(i)]
For each $\kappa\in[\beta\inv,\beta]$, $z_j^\eps(\kk)$ is a simple
root of $Q^\eps(z,\kk)$. 
\item[(ii)]
$z_j^\eps(\kk)\to z_j^0(\kk)$ as $\eps\to0$, 
uniformly for $\kk\in[\beta\inv,\beta]$,
together with any finite number of derivatives in $\kk$.
\item[(iii)] There exists $\zeta_j<\beta$
satisfying $\zeta_j^\eps\to\kk_j^0$ as $\eps\to0$,
such that $\re z_j^\eps(\kk)\ge0$ if and only if 
$\kk\ge \zeta_j^\eps$, and 
\begin{equation} \label{e:dzetak}
\re \frac{dz_j^\eps}{d\kk} >0 \quad\mbox{and}\quad
\im \frac{dz_j^\eps}{d\kk} >0 
\quad\mbox{for all $\kk\in[\zeta_j^\eps,\beta]$.}
\end{equation}
\end{itemize}
\end{proposition}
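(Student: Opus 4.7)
The plan is to apply the analytic implicit function theorem to $Q^\eps(z;\kk)$ along each curve $z_j^0(\kk)$, uniformly in $\kk\in[\beta\inv,\beta]$, using the simplicity of the zeros established in Lemma~\ref{lem:Q0zj} and the derivative convergence in Proposition~\ref{p:Fconverge}. The proof breaks naturally into three parts: a uniform Rouch\'e/IFT construction of $z_j^\eps$, convergence of its derivatives, and location of the crossing value $\zeta_j^\eps$.

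For (i), by Lemma~\ref{lem:Q0zj} each $z_j^0(\kk)$ is a simple root of $Q(\cdot;\kk)$, so compactness of $[\beta\inv,\beta]$ yields $\eta>0$ with $|\D_zQ(z_j^0(\kk);\kk)|\ge 2\eta$ throughout. Because $\D_zQ\neq 0$ at these points and $Q$ is entire, the standard IFT extends $z_j^0$ analytically to a small complex sector $|\arg\kk|<\hat\gamma'$ with $\hat\gamma'\le\hat\gamma$. I would choose $\delta>0$ small enough that $|\D_zQ|\ge\eta$ on the tube
\begin{equation*}
\mathcal{T}=\{(z,\kk):|z-z_j^0(\kk)|\le\delta,\ \kk\in[\beta\inv,\beta],\ |\arg\kk|\le\hat\gamma'\},
\end{equation*}
and that $\mathcal{T}$ lies inside the domain \eqref{b:z2}. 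For $\eps$ small, Proposition~\ref{p:Fconverge} gives $\sup_\mathcal{T}|Q^\eps-Q|<\eta\delta/2$ and $\sup_\mathcal{T}|\D_zQ^\eps-\D_zQ|<\eta/2$. Rouch\'e's theorem applied on each disk $B(z_j^0(\kk),\delta)$ then produces a unique zero $z_j^\eps(\kk)$ of $Q^\eps(\cdot;\kk)$ there, which is simple because $|\D_zQ^\eps|\ge\eta/2$. The analytic IFT in the $\kk$-variable now identifies $\kk\mapsto z_j^\eps(\kk)$ as a holomorphic function on the sector, hence real-analytic on $[\beta\inv,\beta]$.

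Claim (ii) follows by differentiating the identity $Q^\eps(z_j^\eps(\kk);\kk)\equiv 0$: one obtains $dz_j^\eps/d\kk=-\D_\kk Q^\eps/\D_z Q^\eps$, and inductively each derivative $d^k z_j^\eps/d\kk^k$ is a rational expression in the partials $\D_z^i\D_\kk^j Q^\eps$ evaluated along the curve, with denominator a positive power of $\D_zQ^\eps$ whose absolute value is bounded below by a power of $\eta$. The uniform convergence \eqref{e:dconv} combined with the Rouch\'e localization $|z_j^\eps-z_j^0|<\delta$ (for arbitrarily small $\delta$, up to decreasing $\eps$) then yields $z_j^\eps\to z_j^0$ in $C^k$ for every $k$ on $[\beta\inv,\beta]$.

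For (iii), Lemma~\ref{lem:Q0zj} supplies $\re z_j^0(\kk_j^0)=0$ with $\re\,dz_j^0/d\kk>0$ on $[\kk_j^0,\beta]$. Re-inspecting \eqref{e:redzdk} from the proof of Lemma~\ref{l:Qzeros}, the sufficient condition $y^2>\kk^2$ at $z_j^0(\kk_j^0)=it_j$ reduces via \eqref{e:alphat} to $t_j^2>\sqrt{1+t_j^2}-1$, a strict open inequality. Hence both $\re\,dz_j^0/d\kk>0$ and $\im\,dz_j^0/d\kk>0$ persist on an open interval containing $\kk_j^0$, so $\re z_j^0$ crosses zero transversely there. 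By (ii), for $\eps$ small $\re z_j^\eps$ is strictly increasing through zero in that neighborhood, giving a unique $\zeta_j^\eps$ with $\zeta_j^\eps\to\kk_j^0$. The inequalities \eqref{e:dzetak} on $[\zeta_j^\eps,\beta]$ follow from the compactness-based uniform lower bounds on $\re\,dz_j^0/d\kk$ and $\im\,dz_j^0/d\kk$ there, together with uniform $C^1$-convergence of $z_j^\eps\to z_j^0$. The main obstacle is the uniform analytic IFT setup --- which is precisely why Proposition~\ref{p:Fconverge} was formulated to give derivative convergence on a complex $\kk$-sector rather than merely on a real interval.
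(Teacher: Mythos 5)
Your proposal is correct and follows essentially the same route the paper takes: the paper's proof is a one-sentence invocation of "standard implicit function theorem arguments using the simplicity of the roots of $Q$, the convergence in Proposition~\ref{p:Fconverge}, and Lemma~\ref{lem:Q0zj}," and your write-up is a careful elaboration of precisely that argument (uniform Rouch\'e/IFT on a tube around $z_j^0$ via the complex $\kappa$-sector, $C^k$ convergence from \eqref{e:dconv}, and transversality at $\kappa_j^0$ from the strict inequality $t_j^2>\sqrt{1+t_j^2}-1$). One small nit: you attribute simplicity of the nonzero roots of $Q$ to Lemma~\ref{lem:Q0zj}, but that is actually established by the unlabeled lemma at the start of Section~\ref{s:Q}; Lemma~\ref{lem:Q0zj} gives the curves and their nondegeneracy.
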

\begin{proof}
The existence of the curve, its analyticity in $\kk$,
and properties (i), (ii) and (iii),
follow from standard implicit function theorem arguments
using the simplicity of the roots of $Q$, the convergence in 
Proposition~\ref{p:Fconverge}, and Lemma~\ref{lem:Q0zj}.
\end{proof}

\section{Analysis of eigenvalues of \texorpdfstring{$B$}{B}}\label{s:eigsB}

The $M$ eigenvalues $\lambda$ of $B$ are generated via the relation
\eqref{dd:lambdavp} by: 
one of the roots of $F$ at $\vp=1$, the one near $-M$, and the 
$M-2$ roots in $D_a$.
The roots $\pm A^{-1/2}$, one root at $1$, and one root at $A\inv$
are spurious, as discussed earlier.
We have not characterized the multiplicity of all the 
eigenvalues or all the roots,
but each eigenvalue must correspond to some root of $F$,
and vice versa.

\subsection{Curves of unstable eigenvalues}
Recall that zeros $z$ of $Q^\eps(z;\kk)$ correspond to eigenvalues $\lambda$ of 
the matrix $B$ via the relation \eqref{e:lambdaz}.
We rescale this relation by defining 
\begin{equation}\label{d:Lamz}
\Lambda(z;\kk,\eps) = \frac{M\lambda}{K} = z + \frac{\eps}{\kk}\frac{z^2}{1+\eps^2z}.
\end{equation}
Clearly $\Lambda(z;\kk,\eps)\to z$ as $\eps\to0$, together with derivatives,
uniformly for $z$, $\kk$ satisfying \eqref{b:z2}.

When $\eps=0$, of course we have $\re\Lambda(z;\kk,0)\ge0$ 
if and only if $x=\re z\ge0$,
for any $\kk>0$. By stardard implicit function theorem arguments,
for small enough $\eps>0$ there is a real analytic function 
$(y,\kk)\mapsto \hat x(y,\kk,\eps)$ such
that  for $|z|\le\hat\alpha$ and $\kappa\in[\beta\inv,\beta]$,
\[
\re \Lambda(x+iy;\kk,\eps)\ge 0 \quad\mbox{if and only if} 
\quad x\ge \hat x(y,\kk,\eps) \,.
\]
Let $\calI_\eps\subset B(0,\hat\alpha)\times [\beta\inv,\beta]$ denote the surface on which this holds,
i.e., where $\re\Lambda=0$. When $\eps=0$, the imaginary axis $\calI_0$ 
meets each curve $z_j^0$ transversely 
due to the computation in \eqref{e:redzdk}. 
Therefore, for sufficiently small $\eps>0$,
the surface $\calI_\eps$ meets each curve $z_j^\eps$ 
provided by Proposition~\ref{p:zkeps} transversely. 
By consequence, each curve given by 
\begin{equation}\label{d:lambdak}
\lambda_j^\eps(\kk) = \kk\eps^3\,\Lambda(z_j^\eps(\kk),\kk,\eps) \,,
\qquad \kk\in[\beta\inv,\beta],
\end{equation}
provides a curve of eigenvalues of $B$ that
must cross the imaginary axis transversely as $\kappa$ increases,
exactly once for $\kk\in[\beta\inv,\beta]$.

\subsection{Proof of Theorem~\ref{thm:eigsB}}
Let $\beta_0\in(0,1)$ and $k\in\N$. Recalling that the curves $z_j^0(\kappa)$ 
and numbers $\kappa^0_j>1$ were defined in \eqref{d:zk}, we fix 
$\beta_k \in (\kappa^0_k,\kappa^0_{k+1})$, 
and note
\[
\re z_j^0(\beta_0)<0 \quad\mbox{for all $j$}, \qquad
\re z_j^0(\beta_k)
\begin{cases}>0 &\quad\mbox{for all $j\le k$},\\
<0 &\quad\mbox{for all $j>k$.}
\end{cases}
\]
Next, choose $\beta>\max(\beta_k,\beta_0\inv)$, let $\alpha_1=\alpha_1(\beta)$
be determined by Proposition~\ref{p:F0roots1},
and choose $\hat\ap>\ap_1$
such that 
\[
|z_j^0(\kappa)|\le \hat \ap \qquad\mbox{for all $\kappa\in[\beta\inv,\beta]$,
\ \ $j=1,\ldots,k$.}
\]
If $M$ is sufficiently large (i.e., $M>M_{0,k}$ for some
$M_{0,k}$ depending on $\beta$) 
then analytic curves 
$z_j^\eps(\kk)$ are defined by Proposition~\ref{p:zkeps} 
and $\lambda_j^\eps(\kk)$ by \eqref{d:lambdak}. Let
\begin{equation}
\lambda_j(\kappa)=\lambda^\eps_j(\kappa),
\quad \kappa\in[\beta\inv,\beta],
\quad j=1,\ldots,k.
\end{equation}
Due to Propositions~\ref{p:Froots1} and \ref{p:zkeps} and the discussion
above, each curve $\lambda_j$ crosses the imaginary axis transversely
at some point $\kappa_j=\kappa_j^\eps\in[\zeta_j^\eps,\beta]$ 
that satisfies 
\begin{equation}\label{e:kappalim}
\kappa_j^\eps \to \kappa_j^0 \quad\mbox{as $\eps\to0$}.
\end{equation}
By consequence, 
for small enough $\eps>0$ we have 
$\kk_{j-1}^\eps<\kk_j^\eps<\beta_k$ for $j=1,\ldots,k$,
where we set $\kk_0^\eps=\beta_0$. Also we have the monotonicity
relations in \eqref{e:dlamdk}.

Since $|z_j^\eps(\kk)|\le\hat\ap$, 
the eigenvalues of $B$ given by $\lambda_j(\kk)$, $j=1,\ldots,k$ 
satisfy the bound
\begin{equation}
|\lambda_j(\kk)|\le 2\kappa\eps^3\hat\ap \le \hat C_k M^{-3/2}
\end{equation}
for $M$ large.  Furthermore, due to Lemma~\ref{lem:simpleBF} 
(proved below), every such eigenvalue 
$\lambda_j(\kk)$ is a simple eigenvalue of $B$,
since the roots $z=z_j^\eps(\kk)$ of $Q^\eps(z;\kk)$ are simple.

It remains to prove that for $\kk\in[\beta_0,\beta_k]$,
if $\hat\lambda\ne0$ is an eigenvalue of $B$ with $\re\hat\lambda\ge0$,
and $\im\lambda\ge0$, 
then necessarily $\hat\lambda=\lambda_j(\kk)$ for some $j\le k$
with $\kk\ge\kk_j^\eps$.  
According to Proposition~\ref{p:Froots1}, necessarily 
such an eigenvalue must satisfy 
\[
\hat\lambda = \kk\eps^3 \Lambda(\hat z;\kk,\eps)\,,
\]
where $Q^\eps(\hat z;\kk)=0$, $\re \hat z\ge0$ and $|\hat z|\le\alpha_1$.

Now, for any $r>0$ sufficiently small, note that the balls 
$B(z_j^0(\kk),r)$ do not overlap or contain $0$ for any $\kk$, 
and each must contain a simple root $z_j^\eps(\kk)$ of $Q^\eps(z,\kk)$.
Fix some such $r>0$, and let $\Omega_r$ be the set of $(z,\kk)$ such that 
\[
\re z\ge0,\quad \im z\ge0, \quad 0<|z|\le \hat\ap,\quad |z-z_j^0(\kk)|\ge r
\ \ \mbox{for $j=1,\ldots,k$},
\]
and $\kk\in[\beta_0,\beta_k]$.
Because $\beta_k<\kk^0_{k+1}$, for sufficiently small $r>0$
we have
\[
\hat \mu(r):= \inf_{\Omega_r} |Q(z,\kk)/z^2| >0\,.
\]
From the convergence in Proposition~\ref{p:Fconverge} it follows
\[
\hat \mu^\eps(r):= \inf_{\Omega_r} |Q^\eps(z,\kk)/z^2| >0\,,
\]
if $\eps>0$ is sufficiently small. Then it follows that 
$|\hat z-z_j^0|<r$ for some $j\le k$, whence necessarily
$\hat z = z_j^\eps(\kk)$. And $\kk\ge\kk^\eps_j$ since $\re\hat z\ge0$.

This completes the proof of Theorem~\ref{thm:eigsB}.

\subsection{Simplicity of eigenvalues}
It remains to prove Lemma~\ref{lem:simpleBF}, which shows 
in particular that simple roots of $F$ provide simple eigenvalues
of $B$.
\begin{proof}[Proof of Lemma~\ref{lem:simpleBF}]
First, we show that the kernel of $B-\lambda I$ is one-dimensional.
Recall from Section~\ref{s:reform} 
that whenever $(B-\lambda I)V=0$, then the components $V_\ell$
have the form \eqref{A1E4} for some constants $c_1$, $c_2$. 
More generally, if $V=V(\vp)$ has the form \eqref{A1E4}
with $\vp_1=\vp$, $\vp_2=(A\vp)\inv$,
and if $\lambda(\vp)=(A-\vp\inv)(\vp-1)$, then
equations \eqref{S1E6}--\eqref{S1E7} are equivalent to the equation
\begin{equation}\label{e:S9e1}
(B-\lambda(\vp) I) V(\vp)
= [e_m, e_1]\calD(\vp) 
\begin{pmatrix}c_1\\c_2\end{pmatrix}  = 0\,,
\end{equation}
where $e_j$ denotes the $j$th standard basis vector, and
\begin{equation}
\calD(\vp)=
\begin{pmatrix}
f\left(  \varphi_{1}\right)  & f\left(  \varphi_{2}\right) \\
g\left(  \varphi_{1}\right)  & g\left(  \varphi_{2}\right)
\end{pmatrix} .
\end{equation}
The value $\lambda$ is an eigenvalue if and only if $\calD(\vp)$
is singular. 
The matrix $\calD(\vp)$ does not vanish in this case, however,
for the following reason. 
Since $S(\vp)\ne0$ and $A\vp_1\vp_2=1$, necessarily $\vp_1$ and $\vp_2$
are distinct and have the same sign. 
But the function $\vp f(\vp)=A\vp^M+\vp-1$ is convex and cannot have two
distinct roots with the same sign.   
Hence it is not possible that $f(\vp_j)=0$ for both $j=1$ and $2$.

It follows that the kernel of $B-\lambda I$ is one dimensional,
and the eigenspace is spanned by $V(\vp)$, taking 
\[
\begin{pmatrix}
c_1\\c_2
\end{pmatrix}=
\begin{pmatrix}
f(\vp_2)\\-f(\vp_1)
\end{pmatrix}.
\]
Next, we determine when $\lambda$ is simple, 
i.e., when it has algebraic multiplicity one.
Since $(B-\lambda I)V=0$, this is the case if and only if the equation 
\begin{equation}\label{e:BU}
(B-\lambda I)U=V
\end{equation}
has no solution. 
Letting $'$ denote differentiation with respect to $\vp$, 
it follows by differentiating \eqref{e:S9e1}
(while keeping $c_1$, $c_2$ fixed), that 
\[
(B-\lambda I)V' = \lambda' V + 
[e_m,e_1]\calD'(\vp)
\begin{pmatrix}c_1\\c_2\end{pmatrix} \,.
\]
Now, $\lambda'= A-\vp^{-2}\ne0$ whenever $\vp\ne\pm A^{-1/2}$,
so it follows that a solution to \eqref{e:BU}
exists if and only if $\lambda' U = V'(\vp)-\hat U$
where $\hat U$ is a solution to 
\[
(B-\lambda I)\hat U = 
[e_m,e_1]\calD'(\vp)
\begin{pmatrix}c_1\\c_2\end{pmatrix} \,.
\]
As in Section~\ref{s:reform}, necessarily
$\hat U_\ell = \hat c_1 \vp_1^{M-\ell}+ \hat c_2 \vp_2^{M-\ell}$
for some constants $\hat c_1$, $\hat c_2$ that satisfy
\begin{equation}\label{e:hatc}
\cal D(\vp)
\begin{pmatrix}\hat c_1\\\hat c_2\end{pmatrix} 
=
 \calD'(\vp)
\begin{pmatrix}c_1\\c_2\end{pmatrix} \,.
\end{equation}
Writing $f_j=f(\vp_j)$, $f_j' = f'(\vp_j)\vp_j'$ and 
similarly for $g_j$, $g_j'$, the fact that $\calD(\vp)$ is singular means
\begin{equation}\label{e:del00}
\delta(\vp)= f_1g_2-g_1f_2 = 0,
\end{equation}
and a left null vector is given by $(g_1,-f_1)$ or 
$(g_2,-f_2)$ (since $\calD(\vp)\ne0$).
Supposing $f_1\ne0$, applying the left null vector 
to \eqref{e:hatc}
we find that a solution of \eqref{e:hatc} exists if and only if
\begin{align*}
0 = (g_1,-f_1) 
\begin{pmatrix}
f_1' & f_2'\\ g_1' & g_2'
\end{pmatrix}
\begin{pmatrix}f_2\\ -f_1 \end{pmatrix}
%\\&= 
&=
g_1(f_1'f_2-f_2'f_1)+f_1(f_1 g_2' -f_2 g_1')
\\&=
f_1 \delta'(\vp),
\end{align*}
where we used \eqref{e:del00} to replace $g_1f_2$ by $f_1g_2$.
If $f_2\ne0$ similarly the criterion is $0=f_2\delta'(\vp)$.
Thus an eigenvalue $\lambda$ is simple if and only if $\delta'(\vp)\ne0$,
and this is equivalent to $F'(\vp)\ne0$.
\end{proof}

\section*{Acknowledgements}
The authors acknowledge support
from the Hausdorff Center for Mathematics and the CRC 1060 on {\it Mathematics of emergent effects}, Universit\"at Bonn. 
This material is based upon work supported by the National
Science Foundation under grants 
DMS 1515400 and 1812609,
partially supported by the Simons Foundation under grant 395796, %to RP.
and by the NSF Research Network Grant no.\ RNMS11-07444 (KI-Net).

\bibliographystyle{siam}
\bibliography{bib-hopf.bib}

\begin{thebibliography}{10}

\bibitem{AB1979}
{\sc M.~Aizenman and T.~A. Bak}, {\em Convergence to equilibrium in a system of
  reacting polymers}, Comm. Math. Phys., 65 (1979), pp.~203--230.

\bibitem{BCP1986}
{\sc J.~M. Ball, J.~Carr, and O.~Penrose}, {\em The {B}ecker-{D}\"oring cluster
  equations: basic properties and asymptotic behaviour of solutions}, Comm.
  Math. Phys., 104 (1986), pp.~657--692.

\bibitem{bar1992}
{\sc K.~Bar-Eli and R.~M. Noyes}, {\em Gas-evolution oscillators. 10. a model
  based on a delay equation}, J. Phys. Chem., 96 (1992), pp.~7664--7670.

\bibitem{brilliantov2018}
{\sc N.~Brilliantov, W.~Otieno, S.~Matveev, A.~Smirnov, E.~Tyrtyshnikov, and
  P.~Krapivsky}, {\em Steady oscillations in aggregation-fragmentation
  processes}, Physical Review E, 98 (2018), p.~012109.

\bibitem{CanizoLods2017}
{\sc J.~A. Ca\~{n}izo, A.~Einav, and B.~Lods}, {\em Trend to equilibrium for
  the {B}ecker-{D}\"{o}ring equations: an analogue of {C}ercignani's
  conjecture}, Anal. PDE, 10 (2017), pp.~1663--1708.

\bibitem{CanizoLods2013}
{\sc J.~A. Ca\~{n}izo and B.~Lods}, {\em Exponential convergence to equilibrium
  for subcritical solutions of the {B}ecker-{D}\"{o}ring equations}, J.
  Differential Equations, 255 (2013), pp.~905--950.

\bibitem{C-JSP2007}
{\sc J.~A. Ca{\~n}izo}, {\em Convergence to equilibrium for the discrete
  coagulation-fragmentation equations with detailed balance}, J. Stat. Phys.,
  129 (2007), pp.~1--26.

\bibitem{ChowHale}
{\sc S.~N. Chow and J.~K. Hale}, {\em Methods of bifurcation theory}, vol.~251
  of Grundlehren der Mathematischen Wissenschaften [Fundamental Principles of
  Mathematical Science], Springer-Verlag, New York-Berlin, 1982.

\bibitem{DLP2017}
{\sc P.~Degond, J.-G. Liu, and R.~L. Pego}, {\em Coagulation-fragmentation
  model for animal group-size statistics}, J. Nonlinear Sci., 27 (2017),
  pp.~379--424.

\bibitem{d2000exact}
{\sc R.~D'Hulst and G.~J. Rodgers}, {\em Exact solution of a model for crowding
  and information transmission in financial markets}, International Journal of
  Theoretical and Applied Finance, 3 (2000), pp.~609--616.

\bibitem{eguiluz2000}
{\sc V.~M. Eguiluz and M.~G. Zimmermann}, {\em Transmission of information and
  herd behavior: An application to financial markets}, Physical Review Letters,
  85 (2000), p.~5659.

\bibitem{ELMP}
{\sc M.~Escobedo, P.~Lauren\c{c}ot, S.~Mischler, and B.~Perthame}, {\em
  Gelation and mass conservation in coagulation-fragmentation models}, J.
  Differential Equations, 195 (2003), pp.~143--174.

\bibitem{EMP}
{\sc M.~Escobedo, S.~Mischler, and B.~Perthame}, {\em Gelation in coagulation
  and fragmentation models}, Comm. Math. Phys., 231 (2002), pp.~157--188.

\bibitem{FM2004}
{\sc N.~Fournier and S.~Mischler}, {\em Exponential trend to equilibrium for
  discrete coagulation equations with strong fragmentation and without a
  balance condition}, Proc. R. Soc. Lond. Ser. A Math. Phys. Eng. Sci., 460
  (2004), pp.~2477--2486.

\bibitem{GH}
{\sc J.~Guckenheimer and P.~Holmes}, {\em Nonlinear oscillations, dynamical
  systems, and bifurcations of vector fields}, vol.~42 of Applied Mathematical
  Sciences, Springer-Verlag, New York, 1990.
\newblock Revised and corrected reprint of the 1983 original.

\bibitem{HadelerTomiuk}
{\sc K.~P. Hadeler and J.~Tomiuk}, {\em Periodic solutions of
  difference-differential equations}, Arch. Rational Mech. Anal., 65 (1977),
  pp.~87--95.

\bibitem{HY2017}
{\sc E.~Hingant and R.~Yvinec}, {\em Deterministic and stochastic
  {B}ecker--{D}{\"o}ring equations: Past and recent mathematical developments},
  in Stochastic Processes, Multiscale Modeling, and Numerical Methods for
  Computational Cellular Biology, D.~Holcman, ed., Springer International
  Publishing, Cham, 2017, pp.~175--204.

\bibitem{JabinNiethammer}
{\sc P.-E. Jabin and B.~Niethammer}, {\em On the rate of convergence to
  equilibrium in the {B}ecker-{D}\"{o}ring equations}, J. Differential
  Equations, 191 (2003), pp.~518--543.

\bibitem{Laurencot2000}
{\sc P.~Lauren\c{c}ot}, {\em On a class of continuous coagulation-fragmentation
  equations}, J. Differential Equations, 167 (2000), pp.~245--274.

\bibitem{LNV2018}
{\sc P.~Lauren\c{c}ot, B.~Niethammer, and J.~J.~L. Vel\'{a}zquez}, {\em
  Oscillatory dynamics in {S}moluchowski's coagulation equation with diagonal
  kernel}, Kinet. Relat. Models, 11 (2018), pp.~933--952.

\bibitem{LM2003}
{\sc P.~Lauren{\c{c}}ot and S.~Mischler}, {\em Convergence to equilibrium for
  the continuous coagulation-fragmentation equation}, Bull. Sci. Math., 127
  (2003), pp.~179--190.

\bibitem{LvR2015}
{\sc P.~Lauren{\c{c}}ot and H.~van Roessel}, {\em Absence of gelation and
  self-similar behavior for a coagulation-fragmentation equation}, SIAM J.
  Math. Anal., 47 (2015), pp.~2355--2374.

\bibitem{Lichtner2011}
{\sc M.~Lichtner, M.~Wolfrum, and S.~Yanchuk}, {\em The spectrum of delay
  differential equations with large delay}, SIAM J. Math. Anal., 43 (2011),
  pp.~788--802.

\bibitem{Ma_etal_JTB11}
{\sc Q.~Ma, A.~Johansson, and D.~J.~T. Sumpter}, {\em {A first principles
  derivation of animal group size distributions}}, {J. Theo. Biol.}, {283}
  ({2011}), pp.~{35--43}.

\bibitem{matveev2017}
{\sc S.~Matveev, P.~Krapivsky, A.~Smirnov, E.~Tyrtyshnikov, and N.~V.
  Brilliantov}, {\em Oscillations in aggregation-shattering processes},
  Physical Review Letters, 119 (2017), p.~260601.

\bibitem{MP2008}
{\sc G.~Menon and R.~L. Pego}, {\em The scaling attractor and ultimate dynamics
  for {S}moluchowski's coagulation equations}, J. Nonlinear Sci., 18 (2008),
  pp.~143--190.

\bibitem{MurrayPego2016}
{\sc R.~W. Murray and R.~L. Pego}, {\em Algebraic decay to equilibrium for the
  {B}ecker-{D}\"{o}ring equations}, SIAM J. Math. Anal., 48 (2016),
  pp.~2819--2842.

\bibitem{MurrayPego2017}
\leavevmode\vrule height 2pt depth -1.6pt width 23pt, {\em Cutoff estimates for
  the linearized {B}ecker-{D}\"{o}ring equations}, Commun. Math. Sci., 15
  (2017), pp.~1685--1702.

\bibitem{Niwa-JTB2003}
{\sc H.~S. Niwa}, {\em {Power-law versus exponential distributions of animal
  group sizes}}, {J. Theo. Biol.}, {224} ({2003}), pp.~{451--457}.

\bibitem{smith1983}
{\sc K.~W. Smith and R.~M. Noyes}, {\em Gas evolution oscillators. 3. a
  computational model of the {M}organ reaction}, J. Phys. Chem., 87 (1983),
  pp.~1520--1524.

\bibitem{VZL1988}
{\sc R.~D. Vigil, R.~M. Ziff, and B.~L. Lu}, {\em New universality class of
  gelation in a system with particle breakup}, Phys. Rev. B (3), 38 (1988),
  pp.~942--945.

\bibitem{YRN1985}
{\sc Z.~Yuan, P.~Ruoff, and R.~M. Noyes}, {\em Gas evolution oscillators. 7. a
  quantitative modeling test for the {M}organ reaction}, J. Phys. Chem., 89
  (1985), pp.~5726--5732.

\end{thebibliography}

\end{document}